\tikzstyle{vertex}=[auto=left,circle,draw=black,fill=white, inner sep=1.5]
\newcommand{\Mod}[1]{\ (\mathrm{mod}\ #1)}
\newtheorem{theorem}{Theorem}[section]
\newtheorem{prop}[theorem]{Proposition}
\newtheorem{lema}[theorem]{Lemma}
\newtheorem{corollary}{Corollary}[theorem]
\def \Zl {{\mathbb Z}}
\title{Integral mixed circulant graphs}
\author{ Monu Kadyan and Bikash Bhattacharjya\\
Department of Mathematics\\
Indian Institute of Technology Guwahati, India\\
monu.kadyan@iitg.ac.in, b.bikash@iitg.ac.in }
\date{}
\begin{document}
	\maketitle
	
	\vspace{-0.3in}
	
	
	\begin{center}{\textbf{Abstract}}\end{center}

	A mixed graph is said to be \textit{integral} if all the eigenvalues of its Hermitian adjacency matrix are integers. The \textit{mixed circulant graph} $\text{Circ}(\mathbb{Z}_n,\mathcal{C})$ is a mixed graph on the vertex set $\mathbb{Z}_n$ and edge set $\{ (a,b): b-a\in \mathcal{C} \}$, where $0\not\in \mathcal{C}\subset \Zl_n$.  If $\mathcal{C}$ is closed under inverse, then $\text{Circ}(\mathbb{Z}_n,\mathcal{C})$ is called a \textit{circulant graph}. We express the eigenvalues of $\text{Circ}(\mathbb{Z}_n,\mathcal{C})$ in terms of primitive $n$-th roots of unity, and find a sufficient condition for integrality of the eigenvalues of $\text{Circ}(\mathbb{Z}_n,\mathcal{C})$. For $n\equiv 0 \Mod 4$, we factorize the $n$-th degree cyclotomic polynomial into two irreducible factors over $\mathbb{Q}(i)$. Using this factorization, we characterize integral mixed circulant graphs in terms of their symbol set. We also express the integer eigenvalues of an integral oriented circulant graph in terms of a Ramanujan type sum, and discuss some of their properties. 
		
\vspace*{0.3cm}
\noindent 
\textbf{Keywords.} mixed graph; Hermitian adjacency matrix; integral mixed graph; mixed circulant graph. \\
\textbf{Mathematics Subject Classification:} 05C50, 05C25.

\section{Introduction}

	We only consider graphs without loops and  multi-edges. A  \textit{(simple) graph} $G$ is denoted by $G=(V(G),E(G))$, where $V(G)$ is the vertex set of $G$, and the edge set  $E(G)\subset V(G) \times V(G)\setminus \{(u,u)|u\in V(G)\}$ with $(u,v)\in E(G)$ if and only if $(v,u)\in E(G)$. 
	A graph $G$ is said to be \textit{oriented} if $(u,v)\in E(G)$ implies that $(v,u)\not\in E(G)$. A graph $G$ is said to be \textit{mixed} if $(u,v)\in E(G)$ does not always imply that $(v,u)\in E(G)$, see \cite{2015mixed} for details. In a mixed graph $G$, we call an edge with end vertices $u$ and $v$ to be \textit{undirected} (resp. \textit{directed}) if both $(u,v)$ and $(v,u)$ belong to $E(G)$ (resp. only one of $(u,v)$ and $(v,u)$ belongs to $E(G)$). An undirected edge $(u,v)$ is denoted by $u\leftrightarrow v$, and a directed edge $(u,v)$ is denoted by $u\rightarrow v$. Hence a mixed graph can have both directed and undirected edges. Note that if all edges of a mixed graph $G$ are directed (resp. undirected) then $G$ is an oriented graph (resp. a simple graph). For a mixed graph $G$, the underlying graph $G_{U}$ of $G$ is the simple undirected graph in which all edges of $G$ are considered undirected. By the terms of order, size, number of components, degree of a vertex, distance between two vertices etc., we mean that they are the same as in their underlying graphs.
	
	In 2015, Liu and Li \cite{2015mixed} introduced Hermitian adjacency matrix of a mixed graph. For a mixed graph with $n$ vertices, its \textit{Hermitian adjacency matrix} is denoted by $H(G)=(h_{uv})_{n\times n}$, where $h_{uv}$ is given by
	$$h_{uv} = \left\{ \begin{array}{rl}
		1 &\mbox{ if }
		(u,v)\in E \textnormal{ and } (v,u)\in E \\ i & \mbox{ if } (u,v)\in E \textnormal{ and } (v,u)\not\in E \\
		-i & \mbox{ if } (u,v)\not\in E \textnormal{ and } (v,u)\in E\\
		0 &\textnormal{ otherwise.}
	\end{array}\right.$$ 
	Here $i=\sqrt{-1}$ is the imaginary number unit. Hermitian adjacency matrix of a mixed graph incorporates both adjacency matrix of simple graph and skew-adjacency matrix of an oriented graph. The Hermitian spectrum of $G$, denoted by $Sp_H(G)$, is the multi set of the eigenvalues of $H(G)$. It is easy to see that $H(G)$ is a Hermitian matrix and so $Sp_H(G)\subseteq \mathbb{R}$. 
	
For each edge $(i,j)$ of an weighted directed graph $G$, let a complex number $w_{ij}$ of unit modulus be considered as an weight of the edge $(i,j)$. In \cite{debajit}, the authors defined the adjacency matrix $A(G)=[a_{ij}]$ of an weighted directed graph $G$ by
\[a_{ij}= \left\{ \begin{array}{ll}
		w_{ij} &\mbox{ if } (i,j)\in E(G) \\
		\overline{w}_{ij} &\mbox{ if } (j,i)\in E(G)\\
		0 &\textnormal{ otherwise.}
	\end{array}\right.\]
The authors in \cite{debajit} mainly discussed singularity of the corresponding Laplacian matrix of an weighted directed graph. One can see that the Hermitian adjacency matrix of a mixed graph is a special case of the adjacency matrix of an weighted directed graph. In this article, our focus however is to study integrality of the spectrum of the Hermitian adjacency matrix of a mixed circulant graph.
	
	A mixed graph is said to be \textit{integral} if all the eigenvalues of its Hermitian adjacency matrix are integers. Integral graphs were first defined by Harary and Schwenk in 1974 \cite{harary1974graphs} and proposed a classification of integral graphs. See \cite{balinska2002survey} for a survey on integral graphs. Watanabe and Schwenk \cite{watanabe1979note,watanabe1979integral} proved several interesting results on integral trees in 1979. Csikvari \cite{csikvari2010integral} constructed integral trees with arbitrary large diameters in 2010. Further research on integral trees can be found in \cite{brouwer2008small,brouwer2008integral,wang2000some, wang2002integral}. In $2009$, Ahmadi et al. \cite{ahmadi2009graphs} proved that only a fraction of $2^{-\Omega (n)}$ of the graphs on $n$ vertices have an integral spectrum. Bussemaker et al. \cite{bussemaker1976there} proved that there are exactly $13$ connected cubic integral graphs. Stevanovi{\'c} \cite{stevanovic20034} studied the $4$-regular integral graphs avoiding $\pm3$ in the spectrum, and Lepovi{\'c} et al. \cite{lepovic2005there} proved that there are $93$ non-regular, bipartite integral graphs with maximum degree four. 
	In 2017, Guo et. al. \cite{2017mixed}  found all possible mixed graphs on $n$ vertices with spectrum $\{-n+1,-1,-1,...,-1\}$.
	
	Let $\Gamma$ be a group, $\mathcal{C} \subseteq \Gamma$ and $\mathcal{C}$ does not contain identity element of $\Gamma$. The set $\mathcal{C}$ is said to be \textit{symmetric} (resp. \textit{skew-symmetric}) if $\mathcal{C}$ is closed under inverse (resp. $a^{-1} \not\in\mathcal{C}$ for all $a\in \mathcal{C}$). Define $\overline{\mathcal{C}}= \{u\in \mathcal{C}: u^{-1}\not\in \mathcal{C} \}$. Clearly $\mathcal{C}\setminus \overline{\mathcal{C}}$ is symmetric and $\overline{\mathcal{C}}$ is skew-symmetric. The \textit{mixed Cayley graph} $G=Cay(\Gamma,\mathcal{C})$ is a mixed graph, where $V(G)=\Gamma$ and $E(G)=\{ (a,b):a,b\in \Gamma, ba^{-1}\in \mathcal{C} \}$.  Since we have not assumed that $\mathcal{C}$ is symmetric, so mixed Cayley graph can have directed edges. If $\mathcal{C}$ is symmetric, then $G$ is a (simple) \textit{Cayley graph}. If $\mathcal{C}$ is skew-symmetric then $G$ is an \textit{oriented Cayley graph}.

	In 1982, Bridge and Mena \cite{bridges1982rational} introduced a characterization of integral Cayley graphs over abelian group. Later on, same characterization was rediscovered by Wasin So \cite{2006integral} for cyclic groups in 2006.
	In 2009, Abdollahi and Vatandoost \cite{abdollahi2009cayley} proved that there are exactly seven connected cubic integral Cayley graphs. On the same year, Klotz and Sander \cite{klotz2010integral} proved that if a Cayley graph $Cay(\Gamma,\mathcal{C})$ over an abelian group $\Gamma$ is integral then $\mathcal{C}$ belongs to the Boolean algebra $\mathbb{B}(\Gamma)$ generated by the subgroups of $\Gamma$. Moreover, they conjectured that the converse is also true, which was proved by Alperin and Peterson \cite{alperin2012integral}. In 2014, Cheng et al. \cite{ku2015cayley} proved that normal Cayley graphs (its symbol set is closed under conjugation) over symmetric groups are integral. Alperin \cite{alperin2014rational} gave a characterization of integral Cayley graphs over finite groups.  In 2017, Lu et al. \cite{lu2018integral} gave necessary and sufficient condition for the integrality of Cayley graphs over dihedral group $D_n=\langle a,b| a^n=b^2=1, bab=a^{-1} \rangle$. In particular, they completely determined all integral Cayley graphs over the dihedral group $D_p$ for a prime $p$. In 2019, Cheng et al.\cite{cheng2019integral} obtained several simple sufficient conditions for the integrality of Cayley graphs over dicyclic group $T_{4n}= \langle a,b| a^{2n}=1, a^n=b^2,b^{-1}ab=a^{-1} \rangle $. In particular, they also completely determined all integral Cayley graphs over the dicyclic group $T_{4p}$ for a prime $p$. However, it is far from being solved to obtain an explicit characterization of integral mixed Cayley graphs. As a simple attempt to this aspect, we give a characterization of integral mixed Cayley graphs over cyclic groups in this paper. If $\Gamma=\mathbb{Z}_n$ then the mixed Cayley graph $Cay(\mathbb{Z}_n,\mathcal{C})$ is known as mixed circulant graph, we denote it by $\text{Circ}(\mathbb{Z}_n,\mathcal{C})$. If $\mathcal{C}$ is symmetric then $\text{Circ}(\mathbb{Z}_n,\mathcal{C})$ is a (simple) \textit{circulant graph}. If $\mathcal{C}$ is skew-symmetric then $\text{Circ}(\mathbb{Z}_n,\mathcal{C})$ is an \textit{oriented circulant graph}.
	
This paper is organized as follows. In second section, we express the eigenvalues of a mixed circulant graph as a sum of eigenvalues of a simple circulant graph and an oriented circulant graph. In third section, for $n\equiv 0 \Mod 4$, we obtain a sufficient condition on the set $\mathcal{C}$ for integral mixed circulant graphs. In fourth section, for $n\equiv 0 \Mod 4$, we obtain two monic irreducible factors $\Phi_n^1(x)$ and $\Phi_n^3(x)$ of the cyclotomic polynomial $\Phi_n(x)$ over $\mathbb{Q}(i)$ such that $\Phi_n(x)=\Phi_n^1(x)\cdot \Phi_n^3(x)$. Note that for $n\not\equiv 0 \Mod 4$, the cyclotomic polynomial $\Phi_n(x)$ is irreducible over $\mathbb{Q}(i)$. We prove for even positive integer $n$ that $ n/2= \sum\limits_{d\in D_n} \varphi ( n/d)$, where $D_n$ is the set of all odd divisors of $n$. In Section 5, we characterize integral mixed circulant graphs in terms of their symbol set. In the last section, we express the integer eigenvalues of an integral oriented circulant graph in terms of a Ramanujan type sum, and discuss some of their properties. 

Indeed, Theorem 5.4 of this paper is a special case of Theorem 25 in \cite{ours}. The group for the Cayley graph in this present paper is the special abelian group $\Zl_n$. As a result, the proof techniques of a few intermediary results in this paper are quite different from that in \cite{ours}. Due to the special abelian group $\Zl_n$, the results and their proofs in this paper give more insights into the concepts and derivations as compared to that in \cite{ours}. Further, the results of Section 6 of this paper has great potential of using in the investigation of ``Quantum State Transfers" (ref. \cite{basic}) of mixed circulant graphs.

	\section{Oriented and mixed circulant graphs}

	An $n\times n$ \textit{circulant matrix} $C$ have the form 
	
	\begin{equation*}
		C = 
		\begin{pmatrix}
			c_{0} & c_{1} &c_2& \cdots & c_{n-1} \\
			c_{n-1} & c_{0} &c_1& \cdots & c_{n-2} \\
			c_{n-2} & c_{n-1} &c_0& \cdots & c_{n-3} \\
			\vdots & \vdots & \vdots & \ddots & \vdots \\
			c_{1} & c_{2} &c_3& \cdots & c_{0} 
		\end{pmatrix},
	\end{equation*} where each row is a cyclic shift of the preceding row. We see that the $(j,k)$-th entry of $C$ is $c_{k-j\Mod n}$. We denote such a circulant matrix by $C=\text{Circ}(c_0,...,c_{n-1})$. 
	
	The circulant matrix $C$ is diagonalizable by the matrix $F$ whose $k$-th column is given by $$F_k= \frac{1}{\sqrt{n}}[1,w_n^{k},...,w_n^{(n-1)k} ]^t,$$ where $w_n=\exp(2\pi i /n)$ and $0\leq k \leq (n-1)$. The eigenvalues of $C$ are 
	$$\lambda_k=\sum\limits_{l=0}^{n-1}c_l w_n^{lk} \textnormal{ for } k=0,1,...,n-1.$$ 
	
	See \cite{circulant} for more information on circulant matrices. 
	Let $G=\text{Circ}(\mathbb{Z}_n,\mathcal{C})$ be a mixed circulant graph on $n$ vertices. Let $c_{j\Mod n}=h_{1~ j+1}$. We see that $h_{jk}=c_{k-j \Mod n}$. So the Hermitian adjacency matrix of a mixed circulant graph is a circulant matrix.

	\begin{lema}\label{iocg03} The spectrum of the mixed circulant graph $\text{Circ}(\mathbb{Z}_n,\mathcal{C})$ is $\{\gamma_0,\gamma_1,...,\gamma_{n-1} \}$, where $\gamma_j=\lambda_j+\mu_j$, $$\lambda_j= \sum\limits_{k\in \mathcal{C}\setminus \overline{\mathcal{C}}} w_n^{jk} \textnormal{ and } \mu_j= i \sum\limits_{k\in \overline{\mathcal{C}}} (w_n^{jk} -w_n^{-jk}) \textnormal{ for } j=0,1,...,n-1.$$
	\end{lema}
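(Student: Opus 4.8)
The plan is to read off the first row of the Hermitian adjacency matrix $H(G)$, feed it into the displayed eigenvalue formula for circulant matrices, and then split and re-index the resulting sum according to the decomposition $\mathcal{C} = (\mathcal{C}\setminus\overline{\mathcal{C}}) \,\sqcup\, \overline{\mathcal{C}}$.

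First I would record the first-row entries. By the definition of $\text{Circ}(\mathbb{Z}_n,\mathcal{C})$ we have $(1,j+1)\in E$ exactly when $j\in\mathcal{C}$, and $(j+1,1)\in E$ exactly when $-j\in\mathcal{C}$. Hence, with $c_j = h_{1\ j+1}$ as fixed in the paragraph before the lemma, the entry $c_j$ equals $1$ iff $j,-j\in\mathcal{C}$, i.e. $j\in\mathcal{C}\setminus\overline{\mathcal{C}}$; equals $i$ iff $j\in\overline{\mathcal{C}}$; equals $-i$ iff $-j\in\overline{\mathcal{C}}$; and equals $0$ otherwise. These four cases are pairwise disjoint and exhaustive precisely because $\overline{\mathcal{C}}$ is skew-symmetric (so no residue lies in both $\overline{\mathcal{C}}$ and $-\overline{\mathcal{C}}$, nor in both $\overline{\mathcal{C}}$ and $\mathcal{C}\setminus\overline{\mathcal{C}}$) and $\mathcal{C}\setminus\overline{\mathcal{C}}$ is symmetric.

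Next, since $H(G)$ is a circulant matrix, the eigenvalue formula quoted above gives $\gamma_j = \sum_{l=0}^{n-1} c_l w_n^{lj}$ for $j=0,1,\dots,n-1$. Substituting the values of $c_l$ breaks this into three sums: $\sum_{l\in\mathcal{C}\setminus\overline{\mathcal{C}}} w_n^{lj}$, then $i\sum_{l\in\overline{\mathcal{C}}} w_n^{lj}$, and finally $-i\sum_{l:\,-l\in\overline{\mathcal{C}}} w_n^{lj}$. In the last sum I would substitute $l = -k$ with $k\in\overline{\mathcal{C}}$, using $w_n^{-kj}=w_n^{(n-k)j}$, to rewrite it as $-i\sum_{k\in\overline{\mathcal{C}}} w_n^{-kj}$. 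Combining the second and third sums yields $i\sum_{k\in\overline{\mathcal{C}}}(w_n^{kj}-w_n^{-kj}) = \mu_j$, and the first sum is $\lambda_j$; hence $\gamma_j = \lambda_j+\mu_j$, as asserted.

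There is no genuinely hard step: the proof is a direct computation. The only points needing care are the index substitution $l\mapsto -k$ in the $(-i)$-sum and the verification that the four cases defining $c_l$ are exhaustive and mutually exclusive, which is where skew-symmetry of $\overline{\mathcal{C}}$ is used. As a consistency check one may note that $\gamma_j$ is real, as it must be since $H(G)$ is Hermitian: $\lambda_j$ is a sum over the symmetric set $\mathcal{C}\setminus\overline{\mathcal{C}}$, and $\mu_j = i\sum_{k\in\overline{\mathcal{C}}}(w_n^{kj}-w_n^{-kj}) = -2\sum_{k\in\overline{\mathcal{C}}}\sin(2\pi kj/n)\in\mathbb{R}$.
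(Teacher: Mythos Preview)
Your proof is correct and follows essentially the same approach as the paper's own argument: identify the first-row entries of the circulant Hermitian adjacency matrix, apply the standard eigenvalue formula for circulant matrices, and split the sum according to $\mathcal{C}\setminus\overline{\mathcal{C}}$, $\overline{\mathcal{C}}$, and $-\overline{\mathcal{C}}$. The paper compresses all of this into a single displayed line, whereas you spell out the case analysis for $c_l$ and the re-indexing $l\mapsto -k$ explicitly, but the underlying computation is identical.
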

	\begin{proof}
		Since Hermitian adjacency matrix of a mixed circulant graph is a circulant matrix, the eigenvalues of $H(G)$ are  $$\gamma_j=\sum\limits_{k\in \mathcal{C}\setminus \overline{\mathcal{C}}} w_n^{jk}+ i \sum\limits_{k\in \overline{\mathcal{C}}} w_n^{jk} -i \sum\limits_{k\in \overline{\mathcal{C}}} w_n^{-jk}=\lambda_j+\mu_j.$$
	\end{proof}
	
	Observe that $\lambda_j=\lambda_{n-j}$ and $\mu_j=-\mu_{n-j}$ for all $j=0,1,...,n-1$. Next two corollaries are special cases of Lemma~\ref{iocg03}.
	
	\begin{corollary}\label{iocg01}\cite{biggs1993algebraic} The spectrum of the circulant graph $\text{Circ}(\mathbb{Z}_n,\mathcal{C})$ is $\{\lambda_0,\lambda_1,...,\lambda_{n-1} \}$, where $$\lambda_j=\sum\limits_{k\in \mathcal{C}} w_n^{jk} \textnormal{ for } j=0,1,...,n-1.$$
	\end{corollary}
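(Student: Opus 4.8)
The plan is to deduce this directly from Lemma~\ref{iocg03} by observing that a (simple) circulant graph is precisely a mixed circulant graph whose symbol set $\mathcal{C}$ is symmetric. First I would unwind the definitions: recall $\overline{\mathcal{C}}=\{u\in\mathcal{C}: u^{-1}\not\in\mathcal{C}\}$, which in the group $\Zl_n$ reads $\overline{\mathcal{C}}=\{u\in\mathcal{C}: -u\not\in\mathcal{C}\}$. When $\mathcal{C}$ is closed under inverse, every $u\in\mathcal{C}$ has $-u\in\mathcal{C}$ as well, so $\overline{\mathcal{C}}=\emptyset$ and hence $\mathcal{C}\setminus\overline{\mathcal{C}}=\mathcal{C}$.

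Next I would substitute this into the two formulas of Lemma~\ref{iocg03}. The oriented part $\mu_j=i\sum_{k\in\overline{\mathcal{C}}}(w_n^{jk}-w_n^{-jk})$ is an empty sum, so $\mu_j=0$ for every $j$, while $\lambda_j=\sum_{k\in\mathcal{C}\setminus\overline{\mathcal{C}}}w_n^{jk}=\sum_{k\in\mathcal{C}}w_n^{jk}$. Therefore $\gamma_j=\lambda_j+\mu_j=\sum_{k\in\mathcal{C}}w_n^{jk}$ for $j=0,1,\dots,n-1$, which is exactly the asserted spectrum.

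There is essentially no obstacle here; the only point to verify with any care is that the group-theoretic notion of ``symmetric'' used to define circulant graphs indeed forces $\overline{\mathcal{C}}=\emptyset$, and that for a symmetric $\mathcal{C}$ the Hermitian adjacency matrix has all entries in $\{0,1\}$, so it coincides with the usual adjacency matrix and Lemma~\ref{iocg03} applies verbatim. (Alternatively one could argue directly: the adjacency matrix of $\text{Circ}(\Zl_n,\mathcal{C})$ is the circulant matrix $\text{Circ}(c_0,\dots,c_{n-1})$ with $c_k=1$ if $k\in\mathcal{C}$ and $c_k=0$ otherwise, and then invoke the eigenvalue formula $\lambda_k=\sum_l c_l w_n^{lk}$ for circulant matrices recorded earlier; but the one-line reduction to Lemma~\ref{iocg03} is the cleanest route, and this is why the statement is phrased as a corollary.)
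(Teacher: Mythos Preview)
Your proof is correct and follows exactly the approach the paper intends: the corollary is stated as a special case of Lemma~\ref{iocg03}, and you correctly identify that for a symmetric symbol set $\mathcal{C}$ one has $\overline{\mathcal{C}}=\emptyset$, whence $\mu_j=0$ and $\gamma_j=\lambda_j=\sum_{k\in\mathcal{C}}w_n^{jk}$. The paper gives no proof beyond the remark that it is a special case, so your write-up is simply a fuller version of the same reduction.
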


	\begin{corollary}\label{iocg02} The spectrum of the oriented circulant graph $\text{Circ}(\mathbb{Z}_n,\mathcal{C})$ is $\{\mu_0,\mu_1,...,\mu_{n-1} \}$, where $$\mu_j= i \sum\limits_{k\in {\mathcal{C}}} (w_n^{jk} - w_n^{-jk}) \textnormal{ for } 0\leq j\leq n-1.$$
	\end{corollary}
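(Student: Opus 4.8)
The plan is to obtain this as an immediate specialization of Lemma~\ref{iocg03}. Recall that $\text{Circ}(\mathbb{Z}_n,\mathcal{C})$ is an oriented circulant graph exactly when $\mathcal{C}$ is skew-symmetric, that is, $-k\notin\mathcal{C}$ for every $k\in\mathcal{C}$ (the inverse of $k$ in the additive group $\mathbb{Z}_n$ being $-k$). The only point that needs checking is the set-theoretic identity $\overline{\mathcal{C}}=\mathcal{C}$ under this hypothesis: by definition $\overline{\mathcal{C}}=\{u\in\mathcal{C}:-u\notin\mathcal{C}\}\subseteq\mathcal{C}$ always holds, and skew-symmetry yields the reverse inclusion, since every $k\in\mathcal{C}$ satisfies $-k\notin\mathcal{C}$ and therefore lies in $\overline{\mathcal{C}}$.

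With $\overline{\mathcal{C}}=\mathcal{C}$ in hand, the set $\mathcal{C}\setminus\overline{\mathcal{C}}$ is empty, so in Lemma~\ref{iocg03} the first sum $\lambda_j=\sum_{k\in\mathcal{C}\setminus\overline{\mathcal{C}}}w_n^{jk}$ is an empty sum and hence $0$, while the second sum becomes $\mu_j=i\sum_{k\in\overline{\mathcal{C}}}(w_n^{jk}-w_n^{-jk})=i\sum_{k\in\mathcal{C}}(w_n^{jk}-w_n^{-jk})$. Therefore $\gamma_j=\lambda_j+\mu_j=\mu_j$ for every $j\in\{0,1,\dots,n-1\}$, which is precisely the asserted spectrum.

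I do not anticipate any real obstacle: the argument is just an unwinding of the definition of $\overline{\mathcal{C}}$ for a skew-symmetric symbol set followed by substitution into the already-proved lemma. If one prefers a self-contained derivation, the same conclusion follows directly from the circulant eigenvalue formula $\lambda_k=\sum_{l=0}^{n-1}c_l w_n^{lk}$: for an oriented circulant graph $c_{l}=i$ when $l\in\mathcal{C}$, $c_l=-i$ when $-l\in\mathcal{C}$, and $c_l=0$ otherwise, so that $\lambda_k=i\sum_{l\in\mathcal{C}}w_n^{lk}-i\sum_{l\in\mathcal{C}}w_n^{-lk}$, recovering $\mu_k$.
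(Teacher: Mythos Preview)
Your proposal is correct and follows exactly the paper's approach: the paper states this corollary as a special case of Lemma~\ref{iocg03} without further proof, and your argument simply spells out that specialization by noting that skew-symmetry of $\mathcal{C}$ forces $\overline{\mathcal{C}}=\mathcal{C}$, whence $\lambda_j=0$ and $\gamma_j=\mu_j$.
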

	
	From Lemma~\ref{iocg03}, we observe that the eigenvalues of a mixed circulant graph $\text{Circ}(\mathbb{Z}_n,\mathcal{C})$ are the sum of the eigenvalues of the circulant graph $\text{Circ}(\mathbb{Z}_n,\mathcal{C} \setminus \overline{\mathcal{C}})$ and the oriented circulant graph $\text{Circ}(\mathbb{Z}_n,\overline{\mathcal{C}})$. 
	
	Let $n\geq 2$ be a fixed positive integer. We review some basic definitions and notations from Wasin So \cite{2006integral}. For a divisor $d$ of $n$, define $M_n(d)=\{ d,2d,...,n-d\}$, the set of all positive multiples of $d$ less than $n$. Similarly, for a divisor $d$ of $n$, define $G_n(d)=\{k: 1\leq k\leq n-1, \gcd(k,n)=d \}$. It is clear that $M_n(n)=G_n(n)=\emptyset$, $M_n(d)=dM_{(n/d)}(1)$ and $G_n(d)=dG_{(n/d)}(1)$.
	
	\begin{lema} \cite{2006integral}\label{icg1} If $n=dg$ for some $d,g\in \mathbb{Z}$ then $M_n(d)=\bigcup\limits_{h\mid g} G_n(hd)$.
	\end{lema}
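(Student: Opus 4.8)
The plan is to prove the two inclusions simultaneously by giving a single characterization of membership in $M_n(d)$ in terms of the value of $\gcd(k,n)$, and then matching this up with the $G_n$-sets on the right-hand side. Throughout, $k$ will range over $\{1,2,\dots,n-1\}$, which is exactly the range in which both $M_n(d)$ and each $G_n(hd)$ live, so no bookkeeping about endpoints is lost.

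First I would record the elementary fact that, for $1\le k\le n-1$, one has $k\in M_n(d)$ if and only if $d\mid k$, and — since $d\mid n$ — this is in turn equivalent to $d\mid \gcd(k,n)$ (the forward direction because $d$ divides both $k$ and $n$, the backward direction because $\gcd(k,n)\mid k$). Next, set $e:=\gcd(k,n)$; then $e$ is a positive divisor of $n=dg$. If $d\mid e$, write $e=dh$ with $h$ a positive integer; from $dh=e\mid dg$ we get $h\mid g$. Conversely, if $e=dh$ with $h\mid g$, then trivially $d\mid e$ and moreover $hd\mid gd=n$, so $G_n(hd)$ is a legitimately defined set. Combining these observations, $k\in M_n(d)$ iff $d\mid\gcd(k,n)$ iff $\gcd(k,n)=hd$ for some divisor $h$ of $g$ iff $k\in\bigcup_{h\mid g}G_n(hd)$, which is the claimed equality.

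I would close with two small remarks: the union on the right is actually disjoint, because the sets $G_n(\cdot)$ partition $\{1,\dots,n-1\}$ according to the value of $\gcd(\cdot,n)$; and the index $h=g$ contributes $G_n(n)=\emptyset$, which is consistent since no $k$ with $0<k<n$ has $\gcd(k,n)=n$ (and indeed $n=dg\notin M_n(d)$). I do not anticipate any genuine obstacle here — the argument is a routine divisibility chase — and the only points needing a moment's care are keeping the range $1\le k\le n-1$ fixed throughout and checking that $hd$ is a divisor of $n$ whenever $h\mid g$, so that every $G_n(hd)$ appearing in the union is well-defined.
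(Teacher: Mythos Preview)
Your argument is correct: the chain $k\in M_n(d)\iff d\mid k\iff d\mid\gcd(k,n)\iff \gcd(k,n)=hd$ for some $h\mid g\iff k\in\bigcup_{h\mid g}G_n(hd)$ is a clean, watertight divisibility chase, and your closing remarks about disjointness and the degenerate term $h=g$ are accurate. Note, however, that the paper does not supply its own proof of this lemma --- it is quoted from So~\cite{2006integral} --- so there is nothing in the present paper to compare your approach against; your write-up is exactly the standard proof one would expect for this elementary statement.
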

	
	\begin{theorem} \cite{2006integral}\label{icg2} Let $d$ be a proper divisor of $n$. If $x^n=1$ then $\sum\limits\limits_{q\in G_n(d)}x^q\in \mathbb{Z}$.
	\end{theorem}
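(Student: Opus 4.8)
The plan is to deduce the integrality of $\sum_{q\in G_n(d)}x^q$ from the (much easier) integrality of the analogous sum over $M_n(d)$, combined with the decomposition $M_n(d)=\bigcup_{h\mid g}G_n(hd)$ supplied by Lemma~\ref{icg1}. I would first settle the $M_n(d)$ case. Writing $n=dg$, the set $M_n(d)$ is exactly $\{jd:1\le j\le g-1\}$, so $\sum_{q\in M_n(d)}x^q=\sum_{j=1}^{g-1}y^j$ with $y:=x^d$. Since $x^n=1$ we have $y^g=1$, hence $\sum_{j=0}^{g-1}y^j$ equals $g$ when $y=1$ and equals $0$ otherwise; in either case $\sum_{j=1}^{g-1}y^j\in\{g-1,-1\}\subseteq\mathbb{Z}$. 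Thus $\sum_{q\in M_n(d)}x^q\in\mathbb{Z}$ for every divisor $d$ of $n$ and every $n$-th root of unity $x$.

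Next, for a divisor $d$ of $n$ put $f(d)=\sum_{q\in G_n(d)}x^q$ (so $f(n)=0$ since $G_n(n)=\emptyset$). As the sets $G_n(hd)$, $h\mid g$, are pairwise disjoint (distinct gcd-values), Lemma~\ref{icg1} gives $\sum_{q\in M_n(d)}x^q=\sum_{h\mid g}f(hd)$, i.e. $f(d)=\big(\sum_{q\in M_n(d)}x^q\big)-\sum_{h\mid g,\ h\ge 2}f(hd)$. I would then run a downward induction on $d$ through the divisors of $n$: the base case $d=n$ is immediate, and in the inductive step each index $hd$ with $h\ge 2$ is a divisor of $n$ strictly larger than $d$, so $f(hd)\in\mathbb{Z}$ by the inductive hypothesis, while $\sum_{q\in M_n(d)}x^q\in\mathbb{Z}$ by the first step; hence $f(d)\in\mathbb{Z}$. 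Taking $d$ to be a proper divisor of $n$ yields the theorem.

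The only real content is the first step, and there is nothing delicate there beyond the geometric-series dichotomy; the bookkeeping in the induction (that $h\ge 2$ forces $hd\mid n$ with $hd>d$, and that the $G_n(hd)$ are disjoint) is routine, so I do not expect any serious obstacle. As an alternative one could argue directly via Galois theory: write $x=w_n^{\,s}$, so $f(d)=\sum_{q\in G_n(d)}w_n^{\,sq}$ is an algebraic integer, and any $\sigma\in\mathrm{Gal}(\mathbb{Q}(w_n)/\mathbb{Q})$ acts by $w_n\mapsto w_n^{\,a}$ with $\gcd(a,n)=1$, which permutes $G_n(d)$ via $q\mapsto aq\bmod n$ (the gcd with $n$ is unchanged); hence $\sigma$ fixes $f(d)$, so $f(d)\in\mathbb{Q}$ and therefore $f(d)\in\mathbb{Z}$. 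I would nonetheless prefer the first approach, since it reuses Lemma~\ref{icg1} and avoids any algebraic number theory.
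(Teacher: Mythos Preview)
Your proposal is correct, and it follows exactly the approach of So's original proof in \cite{2006integral}: compute $\sum_{q\in M_n(d)}x^q$ as a geometric series, then use the disjoint decomposition $M_n(d)=\bigcup_{h\mid g}G_n(hd)$ of Lemma~\ref{icg1} together with a downward induction over the divisors of $n$. The present paper does not reprove Theorem~\ref{icg2} (it is quoted from \cite{2006integral}), but it reuses precisely this template for the oriented analogue in Theorem~\ref{iocg5}, so your argument is in line with both the cited source and the paper's own methods; the Galois-theoretic alternative you sketch is also valid but, as you note, unnecessary here.
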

	
	Wasin So \cite{2006integral} characterized integral circulant graphs in the following theorem. Several results of this paper are influenced by \cite{2006integral}.

	\begin{theorem}\cite{2006integral}\label{2006integral}
		The circulant graph $\text{Circ}(\mathbb{Z}_n,\mathcal{C})$ is integral if and only if $\mathcal{C} =\bigcup\limits_{d\in \mathscr{D}}G_n(d)$, where $\mathscr{D} \subseteq \{ d :d\mid n\}$.
	\end{theorem}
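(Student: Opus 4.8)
The plan is to prove the two implications of this ``if and only if'' separately, using throughout the eigenvalue formula $\lambda_j=\sum_{k\in\mathcal{C}}w_n^{jk}$ of Corollary~\ref{iocg01}. The \emph{sufficiency} is the quick direction: if $\mathcal{C}=\bigcup_{d\in\mathscr{D}}G_n(d)$ with $\mathscr{D}\subseteq\{d:d\mid n\}$, then for each $j$ we have $\lambda_j=\sum_{d\in\mathscr{D}}\sum_{k\in G_n(d)}(w_n^{j})^{k}$, and since $(w_n^{j})^{n}=1$, Theorem~\ref{icg2} applied with $x=w_n^{j}$ shows that each inner sum over a proper divisor $d$ is a rational integer (the term $d=n$, if present, is the empty sum $0$). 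Hence every $\lambda_j\in\mathbb{Z}$ and the graph is integral.

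For the \emph{necessity} I would first rephrase the right-hand condition. The sets $G_n(d)$ with $d\mid n$, $d\neq n$, are exactly the orbits of the multiplicative action of the unit group $\mathbb{Z}_n^{\ast}$ on $\mathbb{Z}_n\setminus\{0\}$: a unit $a$ preserves $\gcd(\cdot,n)$, and conversely if $\gcd(k,n)=\gcd(k',n)=d$ then $k'\equiv ak\Mod n$ for a suitable unit $a$, obtained by writing $k=dk_1$, $k'=dk_1'$ (so $k_1,k_1'\in\mathbb{Z}_{n/d}^{\ast}$) and lifting $k_1'k_1^{-1}\in\mathbb{Z}_{n/d}^{\ast}$ along the surjective reduction $\mathbb{Z}_n^{\ast}\to\mathbb{Z}_{n/d}^{\ast}$. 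Consequently the condition ``$\mathcal{C}=\bigcup_{d\in\mathscr{D}}G_n(d)$ for some $\mathscr{D}\subseteq\{d:d\mid n\}$'' is equivalent to ``$a\mathcal{C}=\mathcal{C}$ for every $a\in\mathbb{Z}_n^{\ast}$'', so it suffices to show that integrality of the spectrum forces $\mathcal{C}$ to be invariant under multiplication by units.

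To establish that, assume every $\lambda_j$ is an integer and invert the transform: writing $\mathbf{1}_{\mathcal{C}}$ for the indicator function of $\mathcal{C}$, orthogonality of the characters $k\mapsto w_n^{jk}$ gives $\mathbf{1}_{\mathcal{C}}(k)=\frac{1}{n}\sum_{j=0}^{n-1}\lambda_j\,w_n^{-jk}$ for all $k\in\mathbb{Z}_n$. Fix $a\in\mathbb{Z}_n^{\ast}$ and substitute $j\mapsto a^{-1}j$ (a bijection of $\mathbb{Z}_n$) to get $\mathbf{1}_{\mathcal{C}}(ak)=\frac{1}{n}\sum_{j}\lambda_{a^{-1}j}\,w_n^{-jk}$. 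Since $\lambda_{a^{-1}j}$ is the image of $\lambda_j=\sum_{k\in\mathcal{C}}w_n^{jk}$ under the automorphism of $\mathbb{Q}(w_n)$ over $\mathbb{Q}$ determined by $w_n\mapsto w_n^{a^{-1}}$, and $\lambda_j\in\mathbb{Z}$ is fixed by every such automorphism, we get $\lambda_{a^{-1}j}=\lambda_j$. Hence $\mathbf{1}_{\mathcal{C}}(ak)=\mathbf{1}_{\mathcal{C}}(k)$ for all $k$, i.e. $k\in\mathcal{C}\iff ak\in\mathcal{C}$, which is exactly $a\mathcal{C}=\mathcal{C}$; by the previous paragraph this finishes the necessity.

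The arithmetic in all of this is light. The step I expect to require the most care is the orbit description in the necessity part --- that $\mathbb{Z}_n^{\ast}$ acts on $\mathbb{Z}_n\setminus\{0\}$ with orbits precisely the classes $G_n(d)$ --- which rests on the surjectivity of $\mathbb{Z}_n^{\ast}\to\mathbb{Z}_{n/d}^{\ast}$ and is closely tied to the bookkeeping behind Lemma~\ref{icg1} and Theorem~\ref{icg2}. I would also be careful to observe that the Fourier-inversion argument uses the integrality of \emph{all} the eigenvalues $\lambda_j$, not merely $\lambda_1$; it is precisely this that makes $\mathbf{1}_{\mathcal{C}}$ constant on unit-orbits, and hence $\mathcal{C}$ a union of the sets $G_n(d)$.
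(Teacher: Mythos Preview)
The paper does not supply its own proof of this theorem; it is quoted verbatim from \cite{2006integral}. So there is no in-paper argument to compare against, and your proof stands on its own merits --- which are solid. Both directions are correct: the sufficiency is exactly the use of Theorem~\ref{icg2} that the paper sets up, and for necessity your Galois/Fourier argument is clean and complete (the key facts --- that $\mathrm{Gal}(\mathbb{Q}(w_n)/\mathbb{Q})\cong\mathbb{Z}_n^{\ast}$ via $w_n\mapsto w_n^{a}$, and that the reduction $\mathbb{Z}_n^{\ast}\to\mathbb{Z}_{n/d}^{\ast}$ is surjective --- are standard and you invoke them correctly).

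It is worth noting, however, that the route taken in \cite{2006integral} (and mirrored by this paper in Section~5 for the oriented case, via Lemma~\ref{iocg11} and Theorem~\ref{iocg12}) is different in flavour: rather than invoking Galois automorphisms directly, So sets up a rational subspace $\mathbb{U}=\{u\in\mathbb{Q}^{n-1}:Eu\in\mathbb{Q}^{n-1}\}$ for a suitable character matrix $E$, shows $E(\mathbb{U})\subseteq\mathrm{span}_{\mathbb{Q}}\{u(d):d\mid n\}\subseteq\mathbb{U}$ using irreducibility of cyclotomic polynomials, and then concludes equality by a dimension count. Your approach is arguably more conceptual --- it isolates exactly the symmetry (unit-orbit invariance) forced by integrality --- while the linear-algebra approach has the advantage of generalising mechanically once one knows the irreducible factors of $\Phi_n$ over the relevant base field, which is precisely how the present paper handles the $\mathbb{Q}(i)$ case.
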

	
	The next lemma will help us to extend characterization of integral oriented circulant graphs to integral mixed circulant graphs.
	
	\begin{lema}\label{iocg24}
		The mixed circulant graph $\text{Circ}(\mathbb{Z}_n,\mathcal{C})$ is integral if and only if both $\text{Circ}(\mathbb{Z}_n,\mathcal{C}\setminus \overline{\mathcal{C}})$ and $\text{Circ}(\mathbb{Z}_n, \overline{\mathcal{C}})$ are integral.
	\end{lema}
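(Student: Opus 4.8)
The plan is to read everything off Lemma~\ref{iocg03}, which expresses each eigenvalue of $\text{Circ}(\mathbb{Z}_n,\mathcal{C})$ as $\gamma_j=\lambda_j+\mu_j$, where by Corollary~\ref{iocg01} the numbers $\lambda_0,\dots,\lambda_{n-1}$ are precisely the eigenvalues of the circulant graph $\text{Circ}(\mathbb{Z}_n,\mathcal{C}\setminus\overline{\mathcal{C}})$, and by Corollary~\ref{iocg02} the numbers $\mu_0,\dots,\mu_{n-1}$ are precisely the eigenvalues of the oriented circulant graph $\text{Circ}(\mathbb{Z}_n,\overline{\mathcal{C}})$. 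The backward implication is then immediate: if both $\text{Circ}(\mathbb{Z}_n,\mathcal{C}\setminus\overline{\mathcal{C}})$ and $\text{Circ}(\mathbb{Z}_n,\overline{\mathcal{C}})$ are integral, then each $\gamma_j=\lambda_j+\mu_j$ is a sum of two integers, hence an integer, so $\text{Circ}(\mathbb{Z}_n,\mathcal{C})$ is integral.

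For the forward implication I would use the two symmetry relations recorded right after Lemma~\ref{iocg03}, namely $\lambda_j=\lambda_{n-j}$ and $\mu_j=-\mu_{n-j}$ for all $j$. These give $\gamma_j+\gamma_{n-j}=2\lambda_j$ and $\gamma_j-\gamma_{n-j}=2\mu_j$ for every $j$. Hence, if $\text{Circ}(\mathbb{Z}_n,\mathcal{C})$ is integral, so that every $\gamma_j\in\mathbb{Z}$, we obtain $2\lambda_j\in\mathbb{Z}$ and $2\mu_j\in\mathbb{Z}$; in particular $\lambda_j\in\mathbb{Q}$ and $\mu_j\in\mathbb{Q}$ for all $j$.

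The last step is to promote rationality to integrality. Here I would note that $\lambda_j=\sum_{k\in\mathcal{C}\setminus\overline{\mathcal{C}}}w_n^{jk}$ is a sum of roots of unity, hence an algebraic integer, and that $\mu_j=i\sum_{k\in\overline{\mathcal{C}}}(w_n^{jk}-w_n^{-jk})$ is a $\mathbb{Z}[i]$-combination of roots of unity, hence also an algebraic integer. Since a rational number that is an algebraic integer must be a rational integer, we conclude $\lambda_j\in\mathbb{Z}$ and $\mu_j\in\mathbb{Z}$ for all $j$, i.e.\ both $\text{Circ}(\mathbb{Z}_n,\mathcal{C}\setminus\overline{\mathcal{C}})$ and $\text{Circ}(\mathbb{Z}_n,\overline{\mathcal{C}})$ are integral. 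I do not anticipate a serious obstacle; the only point requiring a line of care is the verification that $\lambda_j$ and $\mu_j$ are algebraic integers (so that ``rational $+$ algebraic integer $\Rightarrow$ integer'' applies), rather than merely real numbers.
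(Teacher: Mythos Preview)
Your argument is correct and essentially identical to the paper's: it too uses $\gamma_j=\lambda_j+\mu_j$ and $\gamma_{n-j}=\lambda_j-\mu_j$ to get $\lambda_j,\mu_j\in\mathbb{Q}$, and then invokes that these are algebraic integers to conclude integrality. Your write-up is in fact slightly more explicit than the paper's in justifying why $\lambda_j$ and $\mu_j$ are algebraic integers.
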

	\begin{proof}
		Let $\gamma_j$ be an integer eigenvalue of the mixed circulant graph $\text{Circ}(\mathbb{Z}_n,\mathcal{C})$. By Lemma~\ref{iocg03}, $\gamma_j=\lambda_j+\mu_j$ and $\gamma_{n-j}=\lambda_j-\mu_j$ for all $j=0,...,n-1$, where $\lambda_j$ is an eigenvalue of $\text{Circ}(\mathbb{Z}_n,\mathcal{C}\setminus \overline{\mathcal{C}})$ and $\mu_j$ is an eigenvalue of $\text{Circ}(\mathbb{Z}_n, \overline{\mathcal{C}})$. Thus $\lambda_j=\frac{\gamma_j+\gamma_{n-j}}{2} \in \mathbb{Q}$ and $\mu_j=\frac{\gamma_j-\gamma_{n-j}}{2} \in \mathbb{Q}$. As $\lambda_j$ and $\mu_j$ are algebraic integers, $\lambda_j, \mu_j \in \mathbb{Q}$ implies that $\lambda_j$ and $\mu_j$ are integers. Thus the circulant graph $\text{Circ}(\mathbb{Z}_n,\mathcal{C}\setminus \overline{\mathcal{C}})$ and the oriented circulant graph $\text{Circ}(\mathbb{Z}_n, \overline{\mathcal{C}})$ are integral. 
		
		Conversely, assume that both $\text{Circ}(\mathbb{Z}_n,\mathcal{C}\setminus \overline{\mathcal{C}})$ and $(\mathbb{Z}_n, \overline{\mathcal{C}})$ are integral. Then Lemma \ref{iocg03} implies that $\text{Circ}(\mathbb{Z}_n,\mathcal{C})$ is integral. 
	\end{proof}
	
	Since some characterization of integral circulant graphs are known from \cite{2006integral}, in view of Lemma ~\ref{iocg24}, it is enough to find characterization of oriented circulant graphs. Note that $x^n=1$ if and only if $x\in \{w_n^t: 0\leq t\leq n-1 \}$, where $w_n=\exp(\frac{2\pi i}{n})$. Therefore the oriented circulant graph $\text{Circ}(\mathbb{Z}_n,\mathcal{C})$ is integral if and only if $\sum\limits_{k\in \mathcal{C}} i(x^k-x^{(n-k)}) \in \mathbb{Z}$ for all $x$ satisfying $x^n=1$.

	\section{Sufficient condition for integral mixed circulant graph on $n\equiv 0 \Mod 4$ vertices}
	
	Let $n\equiv 0 \Mod 4$ be a fixed positive integer throughout this section. For a divisor $d$ of $\frac{n}{4}$ and $r\in \{0,1,2,3\}$, define $M_n^r(d)=\{dk: 0\leq dk < n , k \equiv r \Mod 4 \}$. For $g\in \mathbb{Z}$, let $D_g$ be the set of all odd divisors of $g$.
	
	\begin{lema}\label{iocg1} Let $n=4dg$ for some $d,g\in \mathbb{Z}$. Then the following are true:
		\begin{enumerate}
			\item[(i)] $M_n^1(d) \bigcup M_n^3(d)=\bigcup\limits_{h\in D_g} G_n(hd) $,
			\item[(ii)] $M_n^2(d)=\bigcup\limits_{h\in D_g} G_n(2hd) $, 
			\item[(iii)] $M_n^0(d)= M_n(4d)\bigcup \{ 0\}$.
		\end{enumerate}
	\end{lema}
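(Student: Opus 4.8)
The plan is to reduce each identity to an elementary description of its two sides as sets of odd, or singly-even, multiples of $d$ (resp. $2d$), and then to match them via a $\gcd$ computation, using Lemma~\ref{icg1} to sort the multiples of a fixed divisor into $G_n$-classes. Throughout write $n=4dg$, so $g=n/(4d)$ and $4d\mid n$ (note $d\mid n/4=dg$ automatically).

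First I would record the elementary reformulations of the left-hand sides. Since $k\equiv 1$ or $3\Mod 4$ just means $k$ is odd, the set $M_n^1(d)\cup M_n^3(d)$ is exactly the set of odd multiples $dj$ of $d$ with $1\le j\le 4g-1$, i.e. $M_n(d)\setminus M_n(2d)$. Since $k\equiv 2\Mod 4$ means $k=2j$ with $j$ odd, $M_n^2(d)$ is exactly the set of numbers $2dj$ with $j$ odd and $1\le j\le 2g-1$, i.e. $M_n(2d)\setminus M_n(4d)$. Finally $k\equiv 0\Mod 4$ means $k=4m$ with $m\ge 0$, so $M_n^0(d)=\{4dm:0\le m\le g-1\}=M_n(4d)\cup\{0\}$; this is already (iii), so nothing further is needed there.

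For (i) and (ii) I would run the $\gcd$ bookkeeping. For $k=dj\in M_n(d)$ one has $\gcd(k,n)=d\gcd(j,4g)$, which equals $d\gcd(j,g)$ when $j$ is odd; here $\gcd(j,g)$ is an odd divisor of $g$, so $k\in G_n(hd)$ with $h\in D_g$. Conversely, if $\gcd(k,n)=hd$ with $h\in D_g$ then $d\mid k$, say $k=dj$, and $h=\gcd(j,4g)$ is odd, which forces $j$ odd (otherwise $2\mid h$); thus $k$ is an odd multiple of $d$ below $n$. This proves $M_n(d)\setminus M_n(2d)=\bigcup_{h\in D_g}G_n(hd)$, hence (i). Replacing $d$ by $2d$ verbatim: for $k=2dj\in M_n(2d)$, $\gcd(k,n)=2d\gcd(j,2g)=2d\gcd(j,g)$ when $j$ is odd, with $\gcd(j,g)\in D_g$; and $\gcd(k,n)=2hd$ with $h\in D_g$ forces $k=2dj$ with $j$ odd. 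This gives $M_n(2d)\setminus M_n(4d)=\bigcup_{h\in D_g}G_n(2hd)$, hence (ii). Equivalently, one can invoke Lemma~\ref{icg1} with the pairs $(d,4g)$ and $(2d,2g)$: since $2d\mid n$, a class $G_n(hd)$ is contained in $M_n(2d)$ precisely when $2\mid h$, and likewise a class $G_n(2hd)$ lies in $M_n(4d)$ precisely when $2\mid h$; removing the even-$h$ classes from $M_n(d)$, respectively $M_n(2d)$, leaves exactly the classes indexed by the odd divisors of $4g$, respectively $2g$, which in both cases are the odd divisors of $g$.

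The only point requiring care — and the one I would write out slowly — is the translation between the residue of the multiplier modulo $4$ and the parity of $j$ after dividing out $d$ or $2d$ (namely $k\equiv 1,3\Mod 4\iff k$ odd, and $k\equiv 2\Mod 4\iff k$ is even with $k/2$ odd), together with keeping the index ranges consistent ($0\le dk<n$ versus $1\le j\le 4g-1$, etc.) and isolating the boundary value $k=0$, which contributes the extra $\{0\}$ only in (iii). Beyond that the argument is routine divisor arithmetic, so I do not anticipate a genuine obstacle.
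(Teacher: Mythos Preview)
Your argument is correct and follows essentially the same route as the paper: both identify $M_n^1(d)\cup M_n^3(d)$ with the odd multiples of $d$ below $n$, then match these to $\bigcup_{h\in D_g}G_n(hd)$ via the computation $\gcd(dj,4dg)=d\gcd(j,4g)=d\gcd(j,g)$ for $j$ odd (and the converse parity check), with (ii) handled by the analogous calculation at $2d$ and (iii) read off directly. Your additional framing via $M_n(d)\setminus M_n(2d)$ and the alternative appeal to Lemma~\ref{icg1} are minor organizational variants, not a different method.
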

	\begin{proof} 
		\begin{enumerate}
			\item[(i)] Let $dk\in M_n^1(d) \bigcup\limits M_n^3(d)$. Since $k\equiv 1$ or $3 \Mod 4$, it follows that $k$ is an odd integer. So $\gcd (k,g)=h\in D_g$. Hence $\gcd (kd,n)=\gcd (kd, 4dg)=hd$ implies that $M_n^1(d) \bigcup M_n^3(d)\subseteq \bigcup\limits_{h\in D_g} G_n(hd)$. On the other hand, if $ x\in \bigcup\limits_{h\in D_g} G_n(hd)$ then there exists $h\in D_g$ such that $\gcd(x,n)=hd$. So there exists $x_0\in \mathbb{Z}$ such that $x=hdx_0$ and $\gcd (x_0,\frac{n}{hd})=1$. Since $\frac{n}{hd}$ is an even integer and $\gcd (x_0,\frac{n}{hd})=1$, so $x$ is an odd multiple of $d$. Note that $ M_n^1(d) \bigcup M_n^3(d)$ is the set of all odd multiples of $d$ between $0$ and $n$. Thus $x\in M_n^1(d) \bigcup M_n^3(d)$. Hence $\bigcup\limits_{h\in D_g} G_n(hd) \subseteq M_n^1(d) \bigcup M_n^3(d)$.
			
			\item[(ii)] Let $dk\in M_n^2(d)$ and $h=\gcd(\frac{k}{2},g)$. Since $k\equiv 2 \Mod 4$, we see that $\gcd (k,4g)=2h$. This gives $\gcd (kd,n)=\gcd (kd, 4dg)=2hd$, and so $M_n^2(d) \subseteq \bigcup\limits_{h\in D_g} G_n(2hd)$. On the other hand, if $ x\in \bigcup\limits_{h\in D_g} G_n(2hd)$, there exists $h\in D_g$ such that $\gcd(x,n)=2hd$. So there exists $x_0\in \mathbb{Z}$ such that $x=2hdx_0$ and $\gcd (x_0,\frac{n}{2hd})=1$. Since $\frac{n}{2hd}$ is an even integer and $\gcd (x_0,\frac{n}{2hd})=1$, so $x_0$ is an odd integer. Now $h,x_0$ are odd integers, and so letting $\alpha=2hx_0$, we get $x=\alpha d$, where $\alpha \equiv 2 \Mod 4$. Hence $x\in M_n^2(d)$. Thus $\bigcup\limits_{h\in D_g} G_n(2hd) \subseteq M_n^2(d)$.
			\item[(iii)] By definition, we have
			\begin{equation*}
				\begin{split}
					M_n^0(d)&=\{ dk: 0\leq dk \leq n-1, k=4\alpha \textnormal{ for some } \alpha \in \mathbb{Z}\}\\
					&= \{ 4\alpha d : 1\leq 4 \alpha d \leq n-1 \textnormal{ for some } \alpha \in \mathbb{Z}\}\cup \{ 0\}\\
					&= M_n(4d) \cup \{0\}.
				\end{split} 
			\end{equation*}
		\end{enumerate}
	\end{proof}
	
	\begin{lema}\label{iocg2} If $x^n=1$ and $n=4dg$ for some $d,g\in \mathbb{Z}$, then $ \sum\limits_{q\in M_n^r(d)} x^q\in \mathbb{Z} $ for $r=0,2$. 
	\end{lema}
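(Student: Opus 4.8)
The plan is to reduce both cases to Theorem~\ref{icg2} using the decompositions of $M_n^0(d)$ and $M_n^2(d)$ supplied by Lemma~\ref{iocg1}. The guiding observation is that $\sum_{q\in G_n(d')}x^q$ is a rational integer for every \emph{proper} divisor $d'$ of $n$ whenever $x^n=1$; hence it suffices to write $M_n^r(d)$ (for $r=0,2$) as a disjoint union of sets $G_n(d')$ with $d'$ a proper divisor of $n$, possibly together with the singleton $\{0\}$, which merely contributes $x^0=1$ to the sum. Since distinct divisors of $n$ give pairwise disjoint sets $G_n(\cdot)$, such a union is automatically disjoint and the corresponding sums simply add.

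For $r=2$ I would invoke Lemma~\ref{iocg1}(ii), which gives $M_n^2(d)=\bigcup_{h\in D_g}G_n(2hd)$. For each odd divisor $h$ of $g$ the integer $2hd$ divides $n=4dg$ because $n/(2hd)=2g/h\in\mathbb{Z}$, and it is a \emph{proper} divisor since $n/(2hd)=2g/h\ge 2$. Distinct $h$ yield distinct divisors $2hd$, so the union is disjoint, and therefore $\sum_{q\in M_n^2(d)}x^q=\sum_{h\in D_g}\sum_{q\in G_n(2hd)}x^q$, with every inner sum in $\mathbb{Z}$ by Theorem~\ref{icg2}; hence the total lies in $\mathbb{Z}$.

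For $r=0$ I would use Lemma~\ref{iocg1}(iii) to write $M_n^0(d)=M_n(4d)\cup\{0\}$. As $n=(4d)\,g$, Lemma~\ref{icg1} gives $M_n(4d)=\bigcup_{h\mid g}G_n(4hd)$; the term $h=g$ contributes $G_n(n)=\emptyset$, while for every proper divisor $h$ of $g$ the number $4hd$ is a proper divisor of $n$ (as $n/(4hd)=g/h\ge 2$), and again the union is disjoint. Thus $\sum_{q\in M_n^0(d)}x^q=1+\sum_{h\mid g}\sum_{q\in G_n(4hd)}x^q$, which is an integer by Theorem~\ref{icg2}.

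The whole argument is essentially bookkeeping once Lemma~\ref{iocg1} is available; the only point needing a little care is verifying that each divisor appearing ($2hd$ or $4hd$) is a \emph{proper} divisor of $n$, so that Theorem~\ref{icg2} genuinely applies, and that the unions are disjoint so the integer contributions add without overlap. It is worth noting why $r=1,3$ is excluded: Lemma~\ref{iocg1}(i) only gives $M_n^1(d)\cup M_n^3(d)=\bigcup_{h\in D_g}G_n(hd)$, which pools the odd multiples of $d$ but does not separate $M_n^1(d)$ from $M_n^3(d)$ — and this separation is exactly the obstruction handled later by factoring $\Phi_n(x)$ over $\mathbb{Q}(i)$.
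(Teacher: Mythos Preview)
Your proof is correct and follows essentially the same approach as the paper: decompose $M_n^0(d)$ and $M_n^2(d)$ via Lemma~\ref{iocg1} (together with Lemma~\ref{icg1} for the $r=0$ case) into disjoint unions of sets $G_n(d')$ with $d'$ a proper divisor of $n$, and then apply Theorem~\ref{icg2} term by term. Your version is in fact slightly more detailed than the paper's, which treats $r=0$ explicitly and dismisses $r=2$ with ``similarly''; your extra care in checking that each $2hd$ and $4hd$ is a proper divisor is exactly the verification the paper leaves implicit.
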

	\begin{proof} From Lemma~\ref{iocg1} and Lemma~\ref{icg1}, we have $M_n^0(d)=\bigcup\limits_{h\mid g} G_n(4hd)\bigcup\limits \{0\}$. Therefore 
		\begin{equation}\label{eq1}
			\begin{split}
				\sum\limits_{q\in M_n^0(d)} x^q =1+\sum\limits_{h\mid g} \sum\limits_{q\in G_n(4hd)} x^q.
			\end{split} 
		\end{equation}
		As $G_n(4gd)=G_n(n)=\emptyset$, the summation in (\ref{eq1}) is over proper divisors of $g$. Theorem~\ref{icg2} now implies that $\sum\limits_{q\in M_n^0(d)} x^q\in \mathbb{Z} $. Similarly, one can show that $ \sum\limits_{q\in M_n^2(d)} x^q\in \mathbb{Z}$.
	\end{proof}

	\begin{lema}\label{iocg3}
		If $x^n=1$ and $n=4dg$ for some $d,g\in \mathbb{Z}$, then $$i\bigg( \sum\limits_{q\in M_n^1(d)} x^q\bigg)- i\bigg( \sum\limits_{q\in M_n^3(d)} x^q\bigg) \in \mathbb{Z}.$$
	\end{lema}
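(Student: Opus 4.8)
The plan is to evaluate the difference $\sum_{q\in M_n^1(d)}x^q-\sum_{q\in M_n^3(d)}x^q$ directly, by recognising it as a geometric sum. First I would write out the two index sets explicitly. Since $n=4dg$ and $d>0$, a multiple $dk$ of $d$ with $k\equiv 1\Mod 4$ lies in $[0,n)$ exactly when $k\in\{1,5,\dots,4g-3\}$, and with $k\equiv 3\Mod 4$ exactly when $k\in\{3,7,\dots,4g-1\}$; both progressions have $g$ terms, so
$$M_n^1(d)=\{d(4j+1):0\le j\le g-1\},\qquad M_n^3(d)=\{d(4j+3):0\le j\le g-1\}.$$
Consequently
$$\sum_{q\in M_n^1(d)}x^q-\sum_{q\in M_n^3(d)}x^q=\sum_{j=0}^{g-1}\bigl(x^{d(4j+1)}-x^{d(4j+3)}\bigr)=x^d\bigl(1-x^{2d}\bigr)\sum_{j=0}^{g-1}\bigl(x^{4d}\bigr)^{j}.$$

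Next I would split into cases according to the order of $x^{4d}$. If $x^{4d}\neq 1$, then $\sum_{j=0}^{g-1}(x^{4d})^{j}=\dfrac{x^{4dg}-1}{x^{4d}-1}=\dfrac{x^{n}-1}{x^{4d}-1}=0$ since $x^{n}=1$, so the difference is $0$ and hence so is its product with $i$. If $x^{4d}=1$, the geometric sum equals $g$, so the difference equals $g\,x^d\bigl(1-x^{2d}\bigr)$; now $(x^{2d})^2=x^{4d}=1$ forces $x^{2d}=\pm1$. For $x^{2d}=1$ the difference vanishes again, while for $x^{2d}=-1$ we have $(x^d)^2=-1$, so $x^d\in\{i,-i\}$ and the difference is $2g\,x^d=\pm2gi$; multiplying by $i$ gives $\mp2g\in\mathbb{Z}$. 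Collecting the cases yields $i\bigl(\sum_{q\in M_n^1(d)}x^q\bigr)-i\bigl(\sum_{q\in M_n^3(d)}x^q\bigr)\in\mathbb{Z}$.

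No step here is a genuine obstacle; the only things to watch are the bookkeeping of the two arithmetic progressions, so that both geometric sums have exactly $g$ terms and the common ratio $x^{4d}$ factors out cleanly, and the observation that the external factor $i$ in the statement is present precisely to cancel the $i$ that appears when $x^d$ is a primitive fourth root of unity. One could alternatively deduce from part~(i) of Lemma~\ref{iocg1} together with Theorem~\ref{icg2} that $\sum_{q\in M_n^1(d)}x^q+\sum_{q\in M_n^3(d)}x^q$ is already an integer, but that does not shortcut the analysis of the difference, so the self-contained geometric-sum computation above seems the cleanest route.
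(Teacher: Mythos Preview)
Your proof is correct. The geometric-sum evaluation and the case split on $x^{4d}$ are sound, and the bookkeeping on the two arithmetic progressions is accurate.

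Your route, however, differs from the paper's. The paper starts from the factorisation
\[
x^n-1=i(x^d-i)\Bigl(\,i\!\!\sum_{q\in M_n^1(d)}\!x^q-i\!\!\sum_{q\in M_n^3(d)}\!x^q-\!\!\sum_{q\in M_n^0(d)}\!x^q+\!\!\sum_{q\in M_n^2(d)}\!x^q\Bigr),
\]
and then splits on whether $x^d=i$. When $x^d=i$ it evaluates the two sums termwise (each summand being $\pm i$), obtaining $-|M_n^1(d)\cup M_n^3(d)|=-2g$. When $x^d\neq i$ the bracketed factor must vanish, whence the desired quantity equals $\sum_{q\in M_n^0(d)}x^q-\sum_{q\in M_n^2(d)}x^q$, which is already known to be an integer by the preceding Lemma~\ref{iocg2}. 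Thus the paper's argument leans on Lemma~\ref{iocg2} (and through it on Lemma~\ref{iocg1} and Theorem~\ref{icg2}), whereas your argument is entirely self-contained. What your approach buys is both independence from those earlier results and an explicit value in every case: the expression is $0$ unless $x^{2d}=-1$, in which case it is $\pm 2g$. What the paper's approach buys is the identity in Case~2 linking the ``odd'' difference to the ``even'' difference $\sum_{M_n^0}x^q-\sum_{M_n^2}x^q$, which fits into the broader inductive machinery of Section~3 and mirrors the style of So's original arguments for the undirected case.
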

	\begin{proof}We have
		\begin{equation*}
			\begin{split}
				(x^n-1)&=(x^d-i)\bigg(\sum\limits_{k=1}^{4g} i^{k-1} x^{n-dk} \bigg)\\
				&=(x^d-i)\bigg(\sum\limits_{k=1}^{4g} i^{k} x^{n-dk} \bigg) (-i)\\
				&=-i(x^d-i) \bigg( \sum\limits_{\substack{1\leq k \leq 4g \\ k\equiv 0 \Mod 4}} x^{n-dk} - \sum\limits_{\substack{1\leq k \leq 4g \\ k\equiv 3 \Mod 4}} i x^{n-dk} - \sum\limits_{\substack{1\leq k \leq 4g \\ k\equiv 2 \Mod 4}} x^{n-dk}+ \sum\limits_{\substack{1\leq k \leq 4g \\ k\equiv 1 \Mod 4}} i x^{n-dk}\bigg)\\
				&=-i(x^d-i) \bigg( \sum\limits_{q\in M_n^0(d)} x^{q} - \sum\limits_{q\in M_n^1(d)} i x^{q} - \sum\limits_{q\in M_n^2(d)} x^{q}+ \sum\limits_{q\in M_n^3(d)} i x^{q}\bigg)\\  
				&=i(x^d-i) \bigg( \sum\limits_{q\in M_n^1(d)} i x^{q} - \sum\limits_{q\in M_n^3(d)} i x^{q} - \sum\limits_{q\in M_n^0(d)} x^{q}+ \sum\limits_{q\in M_n^2(d)} x^{q}\bigg).
			\end{split} 
		\end{equation*}
		Case 1. Assume that $x^d-i=0$.\\ 
		If $q\in M_n^1(d)$ then $q=(4y_1+1)d$ for some $y_1\in \mathbb{Z}$. So $x^q= x^{4y_1d+d}=x^d(x^d)^{4y_1}=i\cdot i^{4y_1}=i$.\\
		If $q\in M_n^3(d)$ then $q=(4y_3+3)d$ for some $y_3\in \mathbb{Z}$. So $x^q= x^{4y_3d+3d}=(x^d)^3(x^d)^{4y_3}=i^3\cdot i^{4y_3}=-i$.\\ Thus
		$$i\bigg( \sum\limits_{q\in M_n^1(d)} x^q\bigg)- i\bigg( \sum\limits_{q\in M_n^3(d)} x^q\bigg)= i \bigg( \sum\limits_{q\in M_n^1(d)}i\bigg)- i\bigg( \sum\limits_{q\in M_n^3(d)}-i\bigg) =- |M_n^1(d) \cup M_n^3(d)| \in \mathbb{Z}.$$
		Case 2. Assume that $x^d-i\neq0$. Then $$ \sum\limits_{q\in M_n^1(d)} i x^{q} - \sum\limits_{q\in M_n^3(d)} i x^{q} - \sum\limits_{q\in M_n^0(d)} x^{q}+\sum\limits_{q\in M_n^2(d)} x^{q}=0, \textnormal{ as }x^n-1=0$$ 
		
		$$\Rightarrow \sum\limits_{q\in M_n^1(d)} i x^{q} - \sum\limits_{q\in M_n^3(d)} i x^{q} = \sum\limits_{q\in M_n^0(d)} x^{q}- \sum\limits_{q\in M_n^2(d)} x^{q}\in \mathbb{Z}, \textnormal{ by Lemma } \ref{iocg2}.$$ 
	\end{proof}
	
	Let $n=4dg$ for some $d,g\in \mathbb{Z}$. Since $1\in D_g$, Lemma \ref{iocg1} gives $G_n(d)\subseteq M_n^1(d) \cup M_n^3(d)$. For $r\in \{1,3 \}$, define $$G_n^r(d)=\{ dk: k\equiv r \Mod 4, \gcd(dk,n )= d \}.$$ In the next lemma, we prove that $G_n(d)$ is a disjoint union of $G_n^1(d)$ and $G_n^3(d)$. For $r\in \{1,3 \}$, define $D_g^r$ to be the set of all odd divisors of $g$ which are congruent to $r$ modulo $4$. Note that $D_g=D_g^1 \bigcup\limits D_g^3$.

	\begin{lema}\label{iocg4} If $n=4dg$ for some $d,g\in \mathbb{Z}$, then the following hold:
		\begin{enumerate}[label=(\roman*)]
			\item $G_n^1(d) \cap G_n^3(d)=\emptyset$,
			\item $G_n(d)=G_n^1(d) \cup G_n^3(d)$,
			\item $M_n^1(d) =\bigg( \bigcup\limits_{h\in D_g^1} G_n^1(hd) \bigg) \cup \bigg( \bigcup\limits_{h\in D_g^3} G_n^3(hd) \bigg)$,
			\item $M_n^3(d) =\bigg( \bigcup\limits_{h\in D_g^1} G_n^3(hd) \bigg) \cup \bigg( \bigcup\limits_{h\in D_g^3} G_n^1(hd) \bigg)$.
		\end{enumerate}
	\end{lema}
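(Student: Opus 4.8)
The plan is to prove all four parts by giving explicit descriptions of the sets involved, relying on two elementary observations. The first observation: since $n=4dg$, for every integer $k$ we have $\gcd(dk,n)=d\gcd(k,4g)$, and when $k$ is odd the factor $4$ drops out, $\gcd(k,4g)=\gcd(k,g)$; hence for odd $k$, putting $h=\gcd(k,g)$ yields $\gcd(dk,n)=dh$, and writing $k=hk'$ one has $k'$ odd. The second observation: $(\Zl/4\Zl)^\times=\{1,3\}$ under multiplication, with $1\cdot 1\equiv 3\cdot 3\equiv 1$ and $1\cdot 3\equiv 3\Mod 4$; this dictates how the residue modulo $4$ of a product of two odd integers is pinned down by the residues of the factors.

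Parts (i) and (ii) are short. For (i), an element of $G_n^1(d)\cap G_n^3(d)$ would be a multiple $dk$ of $d$ whose quotient $k$ (uniquely determined since $d\neq 0$) is simultaneously $\equiv 1$ and $\equiv 3\Mod 4$, which is impossible. For (ii), $G_n^1(d)\cup G_n^3(d)\subseteq G_n(d)$ is immediate from the definitions (elements of $G_n^r(d)$ are multiples of $d$ with $\gcd$ to $n$ equal to $d$); conversely, if $m\in G_n(d)$ then $d\mid m$, say $m=dk$, and $\gcd(dk,4dg)=d$ forces $\gcd(k,4g)=1$, so $k$ is odd and thus $k\equiv 1$ or $3\Mod 4$, placing $m$ in $G_n^1(d)$ or $G_n^3(d)$.

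For (iii) I would argue by double inclusion; part (iv) is then the identical argument with the roles of $1$ and $3$ swapped. Given $m=dk\in M_n^1(d)$ (so $k\equiv 1\Mod 4$, hence $k$ odd), set $h=\gcd(k,g)\in D_g$ and $k'=k/h$; by the first observation $\gcd(m,n)=dh$ and $k'$ is odd, so $m=(hd)k'$ with $\gcd((hd)k',n)=hd$. Since $hk'=k\equiv 1\Mod 4$ with $h,k'$ odd, the second observation leaves only $h\equiv k'\equiv 1\Mod 4$ (whence $m\in G_n^1(hd)$ and $h\in D_g^1$) or $h\equiv k'\equiv 3\Mod 4$ (whence $m\in G_n^3(hd)$ and $h\in D_g^3$); this gives the inclusion $\subseteq$. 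For $\supseteq$, if $h\in D_g^1$ and $m\in G_n^1(hd)$, write $m=(hd)k'$ with $k'\equiv 1\Mod 4$, so $m=d(hk')$ with $hk'\equiv 1\cdot 1\equiv 1\Mod 4$, showing $m\in M_n^1(d)$; if $h\in D_g^3$ and $m\in G_n^3(hd)$, then $m=d(hk')$ with $hk'\equiv 3\cdot 3\equiv 1\Mod 4$, again $m\in M_n^1(d)$. Part (iv) runs verbatim starting from $k\equiv 3\Mod 4$ and using $1\cdot 3\equiv 3\cdot 1\equiv 3\Mod 4$.

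The only step needing care is the bookkeeping behind the first observation: one must invoke the oddness of $k$ to replace $\gcd(k,4g)$ by $\gcd(k,g)$ and so absorb the factor $4$ in $n=4dg$ --- this is exactly why the decomposition runs over the odd divisors $D_g$ (and their refinement into $D_g^1,D_g^3$) rather than over all divisors of $4g$. Apart from that, the proof is an exhaustive but routine case analysis over $(\Zl/4\Zl)^\times$, so I do not expect a real obstacle.
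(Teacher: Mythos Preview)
Your proof is correct and follows essentially the same approach as the paper: parts (i) and (ii) are identical, and for (iii)--(iv) both you and the paper do a case analysis on the residue of $h$ modulo $4$ to match the residue of $k'$ via $hk'=k$. The only cosmetic difference is that the paper invokes its earlier Lemma~\ref{iocg1}(i) to place $dk$ in some $G_n(hd)$ with $h\in D_g$, whereas you re-derive this directly via your ``first observation'' $\gcd(dk,4dg)=d\gcd(k,g)$ for odd $k$; this makes your argument slightly more self-contained but otherwise the two proofs are the same.
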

	\begin{proof}
		\begin{enumerate}[label=(\roman*)]
			\item It is clear from the definitions of $G_n^1(d)$ and $G_n^3(d)$ that $G_n^1(d) \cap G_n^3(d)=\emptyset$.
			\item Since $G_n^1(d) \subseteq G_n(d)$ and $G_n^3(d) \subseteq G_n(d)$, so $G_n^1(d) \cup G_n^3(d) \subseteq G_n(d)$. On the other hand, if $x\in G_n(d)$ then $x=d\alpha$ for some $\alpha$. Note that $\gcd(\alpha ,\frac{n}{d} )=1$, so $\alpha$ is an odd integer. Thus $\alpha \equiv 1\Mod 4$ or $\alpha \equiv 3\Mod 4$. Hence $G_n(d) \subseteq G_n^1(d) \cup G_n^3(d)$.
			\item Let $dk\in M_n^1(d)$ so that $k\equiv 1\Mod 4$. Lemma ~\ref{iocg1} gives an $h\in D_g$ satisfying $dk\in G_n(hd)$. Thus $dk \in G_n^1(hd)$ or $dk \in G_n^3(hd)$, by Part $(ii)$.\\ 
			Case 1: Assume that $h\equiv 1\Mod 4$.\\Let, if possible, $dk \in G_n^3(hd)$, $i.e.$, $dk=\alpha hd$, where $\alpha \equiv 3 \Mod 4$ and $\gcd(\alpha, \frac{n}{hd})=1$. Then we have $k=\alpha h \equiv 3 \Mod 4$, a contradiction. Hence $dk \in G_n^1(hd)$.\\
			Case 2: Assume that $h\equiv 3\Mod 4$.\\ Let, if possible, $dk \in G_n^1(hd)$. Then as in Case 1, we get a contradiction. Hence $dk \in G_n^3(hd)$.\\Thus $M_n^1(d) \subseteq \bigg( \bigcup\limits_{h\in D_g^1} G_n^1(hd) \bigg) \cup \bigg( \bigcup\limits_{h\in D_g^3} G_n^3(hd) \bigg)$. On the other hand, if $\alpha hd \in G_n^1(hd)$, where $h\in D_g^1$ and $\alpha \equiv 1 \Mod 4$, we get $\alpha h \equiv 1 \Mod 4$, $i.e.$, $\alpha h d \in M_n^1(d)$. Similarly, $\beta h d \in G_n^3(hd) \Rightarrow \beta h d \in M_n^1(d)$. Therefore, $\bigg(\bigcup\limits_{h\in D_g^1} G_n^1(hd)\bigg) \cup \bigg(\bigcup\limits_{h\in D_g^3} G_n^3(hd)\bigg) \subseteq M_n^1(d)$.
			\item The proof of this part is similar to the proof of Part (iii).
		\end{enumerate}
	\end{proof}

	Note that $G_n(d)=dG_{(n/d)}(1)$. Therefore $G_n(d)=G_n^1(d) \cup G_n^3(d)$ gives that $G_n^1(d)=dG_{(n/d)}^1(1)$ and $G_n^3(d)=dG_{(n/d)}^3(1)$.

	\begin{theorem}\label{iocg5} 
		Let $n\equiv 0 \Mod 4$ and $d_1=\frac{n}{4}> d_2>...>d_s=1$ be all positive divisors of $\frac{n}{4}$. If $x^n=1$, then $$ \sum\limits_{q\in G_n^1(d_k)} ix^q- \sum\limits_{q\in G_n^3(d_k)} ix^q \in \mathbb{Z} \textnormal{ for } k=1,2,...,s.$$
	\end{theorem}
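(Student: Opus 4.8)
The plan is to reduce the statement to Lemma~\ref{iocg3} by way of the set decompositions in Lemma~\ref{iocg4}, and then to extract the individual terms by a downward induction over the divisors of $n/4$. For a divisor $d$ of $n/4$, write $g=n/(4d)$ (so $n=4dg$) and put
\[
B(d):=\sum_{q\in G_n^1(d)} i x^q-\sum_{q\in G_n^3(d)} i x^q,
\]
which is the quantity we must show is an integer (taking $d=d_k$). First I would observe that the two unions on the right of Lemma~\ref{iocg4}(iii), and likewise those of Lemma~\ref{iocg4}(iv), are \emph{disjoint} unions: indeed $h\mapsto hd$ is injective on the divisors $h$ of $g$, the sets $G_n(hd)$ are pairwise disjoint for distinct such $h$, and $G_n(hd)=G_n^1(hd)\cup G_n^3(hd)$ is itself a disjoint union by Lemma~\ref{iocg4}(i)--(ii). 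Consequently, summing $ix^q$ over $M_n^1(d)$ and over $M_n^3(d)$ and subtracting, Lemma~\ref{iocg4}(iii)--(iv) yields
\[
i\sum_{q\in M_n^1(d)} x^q-i\sum_{q\in M_n^3(d)} x^q=\sum_{h\in D_g^1} B(hd)-\sum_{h\in D_g^3} B(hd).
\]
By Lemma~\ref{iocg3} the left-hand side lies in $\mathbb{Z}$, so for every divisor $d$ of $n/4$ we obtain the relation $\sum_{h\in D_g^1} B(hd)-\sum_{h\in D_g^3} B(hd)\in\mathbb{Z}$, where $g=n/(4d)$.

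Next I would induct on $k$ to prove $B(d_k)\in\mathbb{Z}$. For $k=1$ we have $d_1=n/4$, hence $g=n/(4d_1)=1$, $D_1^1=\{1\}$ and $D_1^3=\emptyset$, so the relation above reads simply $B(d_1)\in\mathbb{Z}$. For the inductive step, assume $B(d_j)\in\mathbb{Z}$ for all $j<k$, and apply the relation with $d=d_k$ and $g=n/(4d_k)$. Since $1\equiv1\Mod 4$, the divisor $h=1$ lies in $D_g^1$ and contributes exactly $B(d_k)$. Every other $h\in D_g$ satisfies $h>1$ and $h\mid g$, so $hd_k$ divides $d_k g=n/4$ and is strictly larger than $d_k$; thus $hd_k=d_j$ for some $j<k$, and $B(hd_k)\in\mathbb{Z}$ by the induction hypothesis. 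Isolating the $h=1$ term then gives $B(d_k)\in\mathbb{Z}$, which completes the induction and the proof.

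The argument is in essence a finite M\"obius-type inversion, and with Lemmas~\ref{iocg3} and~\ref{iocg4} in hand no step is genuinely hard. The points that need care are the disjointness of the unions in Lemma~\ref{iocg4}(iii)--(iv) (so that $\sum ix^q$ truly splits as displayed) and the ordering of the divisors of $n/4$ in decreasing size, so that every index $hd_k$ arising with $h>1$ has already been handled. If there is a main obstacle, it is merely this bookkeeping; the analytic content is carried entirely by Lemma~\ref{iocg3}.
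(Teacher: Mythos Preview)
Your proposal is correct and follows essentially the same route as the paper: both arguments use Lemma~\ref{iocg4}(iii)--(iv) to express $i\sum_{q\in M_n^1(d)}x^q-i\sum_{q\in M_n^3(d)}x^q$ as a signed sum of the quantities $B(hd)$, invoke Lemma~\ref{iocg3} for its integrality, and then run a (strong) induction on $k$ along the decreasing list $d_1>\cdots>d_s$ to isolate $B(d_k)$. Your explicit justification of the disjointness of the unions and your packaging of the relation in terms of the auxiliary function $B(d)$ are slightly tidier, but the underlying argument is the same.
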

	\begin{proof} Apply induction on $k$. For $k=1$, take $d_1=\frac{n}{4}$, $g_1=1$ in Lemma ~\ref{iocg4} to get $M_n^1(d_1)=G_n^1(d_1)$ and $M_n^3(d_1)=G_n^3(d_1)$.\\ Hence by Lemma ~\ref{iocg3}, $ \sum\limits_{q\in G_n^1(d_1)} ix^q- \sum\limits_{q\in G_n^3(d_1)} ix^q \in \mathbb{Z} $. \\
		Assume that the statement holds for $k$, where $1\leq k <s$. To show that $ \sum\limits_{q\in G_n^1(d_{k+1})} ix^q- \sum\limits_{q\in G_n^3(d_{k+1})} ix^q $ is an integer. Now $n=4d_{k+1}g_{k+1}$ for some $d_{k+1},g_{k+1}\in \mathbb{Z}$. Lemma ~\ref{iocg4} implies that
		\begin{equation}\label{eq2}
			\begin{split}
				M_n^1(d_{k+1}) =\bigg( \bigcup\limits_{h\in D_{g_{k+1}}^1} G_n^1(hd_{k+1}) \bigg) \cup \bigg( \bigcup\limits_{h\in D_{g_{k+1}}^3} G_n^3(hd_{k+1}) \bigg)
			\end{split} 
		\end{equation}
		and 
		\begin{equation}\label{eq3}
			\begin{split}
				M_n^3(d_{k+1}) =\bigg( \bigcup\limits_{h\in D_{g_{k+1}}^1} G_n^3(hd_{k+1}) \bigg)  \cup \bigg( \bigcup\limits_{h\in D_{g_{k+1}}^3} G_n^1(hd_{k+1}) \bigg).
			\end{split} 
		\end{equation}
		Note that $hd_{k+1}$ is also a divisor of $\frac{n}{4}$. If $h>1$ then $hd_{k+1}=d_j$ for some $j<k+1$, and so by induction hypothesis $$ \sum\limits_{q\in G_n^1(hd_{k+1})} ix^q- \sum\limits_{q\in G_n^3(hd_{k+1})} ix^q \in \mathbb{Z}.$$
		
		Note that the unions in (\ref{eq2}) and (\ref{eq3}) are disjoint unions. We have
		\begin{equation*}
			\begin{split}
				\sum\limits_{q\in M_n^1(d_{k+1})} ix^q- \sum\limits_{q\in M_n^3(d_{k+1})} ix^q ~~= &\bigg( \sum\limits_{q\in G_n^1(d_{k+1})} ix^q- \sum\limits_{q\in G_n^3(d_{k+1})} ix^q \bigg) \\
				&+ \sum\limits_{h\in D_{g_{k+1}}^1, h> 1} \bigg( \sum\limits_{q\in G_n^1(hd_{k+1})} ix^q- \sum\limits_{q\in G_n^3(hd_{k+1})} ix^q \bigg)\\
				&+ \sum\limits_{h\in D_{g_{k+1}}^3, h> 1} \bigg( \sum\limits_{q\in G_n^3(hd_{k+1})} ix^q- \sum\limits_{q\in G_n^1(hd_{k+1})} ix^q \bigg).
			\end{split} 
		\end{equation*}
		Hence
		\begin{align}\label{eq4}
			\sum\limits_{q\in G_n^1(d_{k+1})} ix^q- \sum\limits_{q\in G_n^3(d_{k+1})} ix^q~~=&\bigg( \sum\limits_{q\in M_n^1(d_{k+1})} ix^q- \sum\limits_{q\in M_n^3(d_{k+1})} ix^q \bigg) \nonumber\\
			&- \sum\limits_{h\in D_{g_{k+1}}^1, h> 1} \bigg( \sum\limits_{q\in G_n^1(hd_{k+1})} ix^q- \sum\limits_{q\in G_n^3(hd_{k+1})} ix^q \bigg) \nonumber\\
			&- \sum\limits_{h\in D_{g_{k+1}}^3, h> 1} \bigg( \sum\limits_{q\in G_n^3(hd_{k+1})} ix^q- \sum\limits_{q\in G_n^1(hd_{k+1})} ix^q \bigg).
		\end{align} Observe that the first summand in the right of (\ref{eq4}) is an integer by Lemma ~\ref{iocg3}, and the other two summands are also integers by induction hypothesis. Hence $\sum\limits_{q\in G_n^1(d_{k+1})} ix^q- \sum\limits_{q\in G_n^3(d_{k+1})} ix^q \in \mathbb{Z}$. Thus the proof follows by induction.
	\end{proof}

	Now using Corollary~\ref{iocg02} and Theorem~\ref{iocg5}, we get a sufficient condition on the symbol set $\mathcal{C}$ for which the oriented circulant graph $\text{Circ}(\mathbb{Z}_n,\mathcal{C})$ is integral. 
	
	\begin{corollary}\label{iocg6} 
		Let $\text{Circ}(\mathbb{Z}_n,\mathcal{C})$ be an oriented circulant graph on $n\equiv 0 \Mod 4$ vertices. Let $\mathcal{C}= \bigcup\limits_{d\in \mathscr{D}}S_n(d)$, where $\mathscr{D} \subseteq \{ d: d\mid \frac{n}{4}\}$ and $S_n(d) = G_n^{1}(d)$ or $S_n(d)=G_n^{3}(d)$. Then $\text{Circ}(\mathbb{Z}_n,\mathcal{C})$ is integral.
	\end{corollary}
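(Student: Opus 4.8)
The plan is to apply Corollary~\ref{iocg02} together with Theorem~\ref{iocg5}. Fix $j\in\{0,1,\dots,n-1\}$ and set $x=w_n^j$, so that $x^n=1$ and, since $w_n^{-jk}=x^{n-k}$, Corollary~\ref{iocg02} gives the $j$-th eigenvalue of $\text{Circ}(\mathbb{Z}_n,\mathcal{C})$ as $\mu_j=i\sum_{k\in\mathcal{C}}(x^k-x^{n-k})$. Because $\mu_j$ is an eigenvalue of the Gaussian-integer matrix $H(G)$ it is an algebraic integer, and it is real since $H(G)$ is Hermitian; hence it suffices to show $\mu_j\in\mathbb{Q}$, and in fact I will show directly that $\mu_j\in\mathbb{Z}$.

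First I would reduce to a single divisor. Distinct sets $G_n(d)$ are pairwise disjoint and $G_n^1(d),G_n^3(d)\subseteq G_n(d)$, so the union $\mathcal{C}=\bigcup_{d\in\mathscr{D}}S_n(d)$ is disjoint, and therefore
\[
\mu_j=\sum_{d\in\mathscr{D}}\; i\sum_{k\in S_n(d)}(x^k-x^{n-k}).
\]
It is thus enough to prove that each inner sum is an integer. The key observation is a complementation identity: for a divisor $d$ of $\frac{n}{4}$, the map $k\mapsto n-k$ interchanges $G_n^1(d)$ and $G_n^3(d)$. Indeed, if $k=d\alpha\in G_n^1(d)$ with $\alpha\equiv 1\Mod 4$ and $\gcd(\alpha,\tfrac{n}{d})=1$, then $n-k=d(\tfrac{n}{d}-\alpha)$; since $4\mid \tfrac{n}{d}$ we get $\tfrac{n}{d}-\alpha\equiv -1\equiv 3\Mod 4$, and $\gcd(\tfrac{n}{d}-\alpha,\tfrac{n}{d})=\gcd(\alpha,\tfrac{n}{d})=1$, so $n-k\in G_n^3(d)$; the reverse inclusion is symmetric. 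Consequently, whether $S_n(d)=G_n^1(d)$ or $S_n(d)=G_n^3(d)$,
\[
i\sum_{k\in S_n(d)}(x^k-x^{n-k})=\pm\Bigl(\sum_{q\in G_n^1(d)} i x^q-\sum_{q\in G_n^3(d)} i x^q\Bigr),
\]
with sign $+$ in the first case and $-$ in the second.

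To finish, note that $d\mid\frac{n}{4}$, so Theorem~\ref{iocg5} gives $\sum_{q\in G_n^1(d)} ix^q-\sum_{q\in G_n^3(d)} ix^q\in\mathbb{Z}$; hence each summand indexed by $d\in\mathscr{D}$ above is an integer, and therefore $\mu_j\in\mathbb{Z}$. Since $j$ was arbitrary, every eigenvalue of $\text{Circ}(\mathbb{Z}_n,\mathcal{C})$ is an integer, i.e.\ the graph is integral. (As a sanity check consistent with the hypothesis that $\text{Circ}(\mathbb{Z}_n,\mathcal{C})$ is oriented: the same complementation identity shows $-S_n(d)$ equals $G_n^3(d)$ or $G_n^1(d)$, which is disjoint from $S_n(d)$ by Lemma~\ref{iocg4}(i), and disjoint from every $S_n(d')$ with $d'\neq d$ since those lie in a different $G_n(d')$, so $\mathcal{C}$ is genuinely skew-symmetric.) The only substantive point is the complementation identity, and that is a one-line congruence argument made possible precisely by the assumption $d\mid\frac{n}{4}$; everything else is bookkeeping and a direct appeal to Theorem~\ref{iocg5}.
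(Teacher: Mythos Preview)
Your proof is correct and follows exactly the route the paper indicates: apply Corollary~\ref{iocg02} and Theorem~\ref{iocg5}. The paper states the corollary without writing out a proof, leaving implicit the step you make explicit, namely that $k\mapsto n-k$ interchanges $G_n^1(d)$ and $G_n^3(d)$ so that the eigenvalue expression $i\sum_{k\in S_n(d)}(x^k-x^{n-k})$ matches $\pm\bigl(\sum_{q\in G_n^1(d)} ix^q-\sum_{q\in G_n^3(d)} ix^q\bigr)$; your congruence argument for this is correct and is precisely what the assumption $d\mid\frac{n}{4}$ is used for.
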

	
	Using Theorem~\ref{2006integral} and Corollary~\ref{iocg6}, we get a sufficient condition on the symbol set $\mathcal{C}$ for which the mixed circulant graph $\text{Circ}(\mathbb{Z}_n,\mathcal{C})$ is integral.

	\begin{corollary}\label{iocg7} 
		Let $\text{Circ}(\mathbb{Z}_n,\mathcal{C})$ be a mixed circulant graph on $n\equiv 0 \Mod 4$ vertices. Let $\mathcal{C} \setminus \overline{\mathcal{C}} = \bigcup\limits_{d\in \mathscr{D}_1}G_n(d)$ and $\overline{\mathcal{C}} = \bigcup\limits_{d\in \mathscr{D}_2}S_n(d)$, where $\mathscr{D}_1 \subseteq \{ d: d\mid n\}$, $\mathscr{D}_2 \subseteq \{ d: d\mid \frac{n}{4}\}$,  $\mathscr{D}_1 \cap \mathscr{D}_2 = \emptyset$, and $S_n(d) = G_n^{1}(d)$ or $S_n(d)=G_n^{3}(d)$. Then $\text{Circ}(\mathbb{Z}_n,\mathcal{C})$ is integral.
	\end{corollary}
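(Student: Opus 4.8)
The plan is to read Corollary~\ref{iocg7} as the amalgamation, via Lemma~\ref{iocg24}, of the sufficient condition for integral circulant graphs (Theorem~\ref{2006integral}) applied to the symmetric part $\mathcal{C}\setminus\overline{\mathcal{C}}$ together with the sufficient condition for integral oriented circulant graphs (Corollary~\ref{iocg6}) applied to the skew-symmetric part $\overline{\mathcal{C}}$. So the proof is short: from the hypothesis $\mathcal{C}\setminus\overline{\mathcal{C}}=\bigcup_{d\in\mathscr{D}_1}G_n(d)$ with $\mathscr{D}_1\subseteq\{d:d\mid n\}$, Theorem~\ref{2006integral} gives that $\text{Circ}(\mathbb{Z}_n,\mathcal{C}\setminus\overline{\mathcal{C}})$ is integral; from the hypothesis $\overline{\mathcal{C}}=\bigcup_{d\in\mathscr{D}_2}S_n(d)$ with $\mathscr{D}_2\subseteq\{d:d\mid\frac{n}{4}\}$ and each $S_n(d)$ equal to $G_n^1(d)$ or $G_n^3(d)$, Corollary~\ref{iocg6} gives that $\text{Circ}(\mathbb{Z}_n,\overline{\mathcal{C}})$ is integral; and then Lemma~\ref{iocg24} yields integrality of $\text{Circ}(\mathbb{Z}_n,\mathcal{C})$.

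Before invoking these results I would include a brief sanity check that the hypothesis is consistent with the defined notions $\overline{\mathcal{C}}$ and $\mathcal{C}\setminus\overline{\mathcal{C}}$, since Lemma~\ref{iocg24} is stated in terms of those intrinsic objects. Each $G_n(d)$ is symmetric because $\gcd(k,n)=\gcd(n-k,n)$, so $\bigcup_{d\in\mathscr{D}_1}G_n(d)$ is symmetric. On the other hand, since $n\equiv 0\Mod 4$ and $d\mid\frac{n}{4}$ we have $\frac{n}{d}\equiv 0\Mod 4$, so $d\alpha\in G_n^1(d)$ forces $n-d\alpha=d\big(\frac{n}{d}-\alpha\big)$ with $\frac{n}{d}-\alpha\equiv 3\Mod 4$, i.e. the inverse of an element of $G_n^1(d)$ lies in $G_n^3(d)$, and symmetrically for $G_n^3(d)$; hence each $S_n(d)$ is skew-symmetric, and so is $\bigcup_{d\in\mathscr{D}_2}S_n(d)$. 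Since an element of $\bigcup_{d\in\mathscr{D}_1}G_n(d)$ has $\gcd$ with $n$ in $\mathscr{D}_1$ while an element of $\bigcup_{d\in\mathscr{D}_2}S_n(d)$ or its inverse has $\gcd$ with $n$ in $\mathscr{D}_2$, the disjointness $\mathscr{D}_1\cap\mathscr{D}_2=\emptyset$ shows that these two sets and the set of inverses of the elements of the second are pairwise disjoint; a one-line verification then confirms that $\mathcal{C}\setminus\overline{\mathcal{C}}$ and $\overline{\mathcal{C}}$ are indeed the symmetric and skew-symmetric parts named in the statement, so the two cited sufficient conditions apply to exactly the two graphs appearing in Lemma~\ref{iocg24}.

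There is no genuine obstacle here: the work of the section has already been done in Theorem~\ref{iocg5}, Corollary~\ref{iocg6}, and (from~\cite{2006integral}) Theorem~\ref{2006integral}, and the present corollary only records their conjunction. The one place a careful write-up spends any effort is the consistency remark above, ensuring the hypotheses genuinely split $\mathcal{C}$ into the parts that Lemma~\ref{iocg24} requires; after that, the conclusion is immediate.
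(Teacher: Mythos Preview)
Your proof is correct and follows exactly the route the paper intends: the paper states Corollary~\ref{iocg7} without a separate proof, merely noting in the preceding sentence that it follows from Theorem~\ref{2006integral} and Corollary~\ref{iocg6} (implicitly combined via Lemma~\ref{iocg24}). Your additional consistency check that $\bigcup_{d\in\mathscr{D}_1}G_n(d)$ is symmetric, that each $S_n(d)$ is skew-symmetric, and that the disjointness $\mathscr{D}_1\cap\mathscr{D}_2=\emptyset$ makes the two parts genuinely equal to $\mathcal{C}\setminus\overline{\mathcal{C}}$ and $\overline{\mathcal{C}}$ is a welcome clarification the paper leaves implicit.
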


	\section{Irreducible factors of cyclotomic polynomial over $\mathbb{Q}(i)$} 
	
	The \textit{cyclotomic polynomial} $\Phi_n(x)$ is the monic polynomial whose zeros are the primitive $n^{th}$ roots of unity. That is $$\Phi_n(x)= \prod_{a\in G_n(1)}(x-w_n^a),$$ where $w_n=\exp(\frac{2\pi i}{n})$. Clearly the degree of $\Phi_n(x)$ is $\varphi(n)$. See \cite{numbertheory} for more details on cyclotomic polynomial.
	
	\begin{lema}~\cite{numbertheory}
		$x^n-1=\prod\limits_{d\mid n} \Phi_d(x)$ and $\Phi_n(x)\in \mathbb{Z}[x]$.
	\end{lema}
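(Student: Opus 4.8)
The plan is to derive both assertions directly from the definition $\Phi_n(x)=\prod_{a\in G_n(1)}(x-w_n^a)$, using that $w_n^a$ is a primitive $n$-th root of unity precisely when $\gcd(a,n)=1$.

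For the factorization identity, I would first note that $x^n-1$ is monic of degree $n$ and vanishes at the $n$ distinct points $w_n^0,w_n^1,\dots,w_n^{n-1}$, hence $x^n-1=\prod_{k=0}^{n-1}(x-w_n^k)$. Each root $w_n^k$ has multiplicative order $n/\gcd(k,n)$, which is a divisor of $n$, so the set of all $n$-th roots of unity is the disjoint union, over divisors $d$ of $n$, of the set $R_d$ of primitive $d$-th roots of unity; moreover $\prod_{\zeta\in R_d}(x-\zeta)=\Phi_d(x)$ by definition. Grouping the linear factors of $x^n-1$ according to the order of the corresponding root then gives $x^n-1=\prod_{d\mid n}\prod_{\zeta\in R_d}(x-\zeta)=\prod_{d\mid n}\Phi_d(x)$.

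For integrality I would argue by strong induction on $n$. The base case is $\Phi_1(x)=x-1\in\mathbb{Z}[x]$. For the inductive step, assume $\Phi_d(x)\in\mathbb{Z}[x]$ for all $d<n$ and put $f(x)=\prod_{d\mid n,\,d<n}\Phi_d(x)$, a monic polynomial in $\mathbb{Z}[x]$. The factorization identity gives $x^n-1=f(x)\Phi_n(x)$ in $\mathbb{C}[x]$. Since $f$ is monic, the division algorithm applied to $x^n-1$ by $f$ stays inside $\mathbb{Z}[x]$, producing $q(x),r(x)\in\mathbb{Z}[x]$ with $x^n-1=f(x)q(x)+r(x)$ and $\deg r<\deg f$; reading this in $\mathbb{C}[x]$ and using uniqueness of quotient and remainder there forces $r=0$ and $q=\Phi_n$, so $\Phi_n(x)\in\mathbb{Z}[x]$.

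There is no real obstacle here: the lemma is classical. The only point needing care is the last step — one must observe that dividing by a monic integer polynomial keeps everything in $\mathbb{Z}[x]$ and that the result coincides with the division carried out over $\mathbb{C}$, so that the a priori complex polynomial $\Phi_n$ is identified with an integer polynomial. An alternative ending notes that the coefficients of $\Phi_n$ are symmetric functions of roots of unity, hence algebraic integers, and rational by the factorization, hence in $\mathbb{Z}$; but the monic-division argument is more self-contained.
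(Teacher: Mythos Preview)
Your argument is correct and is essentially the standard textbook proof of this classical fact. Note that the paper does not supply its own proof of this lemma at all: it simply quotes the result from the cited reference \cite{numbertheory}, so there is nothing to compare against beyond observing that your approach is the usual one found in such references.
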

	
	\begin{theorem}~\cite{numbertheory}
		The cyclotomic polynomial $\Phi_n(x)$ is irreducible in $\mathbb{Z}[x]$.
	\end{theorem}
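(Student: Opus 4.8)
The plan is to establish irreducibility over $\mathbb{Q}$, which by Gauss's lemma (together with the fact, recorded in the preceding lemma, that $\Phi_n(x)$ is monic with integer coefficients) is equivalent to irreducibility in $\mathbb{Z}[x]$. First I would fix the primitive $n$-th root of unity $\zeta = w_n$ and let $f(x) \in \mathbb{Z}[x]$ be its monic minimal polynomial over $\mathbb{Q}$. Since $\zeta$ is a root of $\Phi_n$, we have $f \mid \Phi_n$ in $\mathbb{Z}[x]$; hence it suffices to prove that $\deg f = \varphi(n)$, equivalently that \emph{every} primitive $n$-th root of unity is a root of $f$.

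The crux is the following claim: if $\alpha$ is a root of $f$ and $p$ is a prime with $p \nmid n$, then $\alpha^p$ is again a root of $f$. To prove it, note that $\alpha^p$ is itself a primitive $n$-th root of unity, hence a root of $\Phi_n$; writing $\Phi_n = f g$ with $g \in \mathbb{Z}[x]$ monic, suppose for contradiction that $g(\alpha^p) = 0$. Then $\alpha$ is a root of $g(x^p)$, so minimality of $f$ gives $f(x) \mid g(x^p)$, first over $\mathbb{Q}[x]$ and then over $\mathbb{Z}[x]$ since $f$ is monic. Reducing modulo $p$ and using $g(x^p) \equiv g(x)^p \pmod{p}$, we obtain $\bar f(x) \mid \bar g(x)^p$ in $\mathbb{F}_p[x]$, so $\bar f$ and $\bar g$ share an irreducible factor, and therefore $\bar\Phi_n = \bar f\,\bar g$ has a repeated irreducible factor in $\mathbb{F}_p[x]$. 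But $\bar\Phi_n \mid x^n - 1$ in $\mathbb{F}_p[x]$, and since $p \nmid n$ the polynomial $x^n - 1$ is separable over $\mathbb{F}_p$ (its derivative $n x^{n-1}$ is coprime to it, as $n \neq 0$ in $\mathbb{F}_p$ and $x^n - 1$ has nonzero constant term). This contradiction proves the claim.

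To finish, I would iterate the claim: any integer $k$ with $\gcd(k,n) = 1$ factors as $k = p_1 p_2 \cdots p_r$ with each $p_j$ prime and coprime to $n$, so applying the claim successively to $\zeta,\ \zeta^{p_1},\ \zeta^{p_1 p_2},\ \dots$ shows that $\zeta^k$ is a root of $f$. Every primitive $n$-th root of unity is of the form $\zeta^k$ with $\gcd(k,n)=1$, so all $\varphi(n)$ of them are roots of $f$; hence $\deg f \ge \varphi(n) = \deg \Phi_n$, forcing $f = \Phi_n$ and $g = 1$.

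The main obstacle is the reduction-modulo-$p$ step inside the claim: one must be careful that the divisibility $f \mid g(x^p)$ transfers correctly from $\mathbb{Z}[x]$ to $\mathbb{F}_p[x]$ (this is where monicity of $f$ is used) and that separability of $x^n-1$ modulo $p$ is exactly what forbids the common factor. Everything else --- Gauss's lemma, the elementary congruence $g(x^p)\equiv g(x)^p$, and the induction on the number of prime factors of $k$ --- is routine bookkeeping.
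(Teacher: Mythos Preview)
Your argument is the classical Dedekind proof and is correct in all essentials. Note, however, that the paper does not actually prove this theorem: it is stated with a citation to \cite{numbertheory} and used as a black box, so there is no ``paper's own proof'' to compare against. Your write-up is precisely the standard argument one finds in that reference, so nothing further is needed.
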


	The polynomial $\Phi_n(x)$ is irreducible over $\mathbb{Q}(i)$ if and only if $[\mathbb{Q}(i,w_n) : \mathbb{Q}(i)]= \varphi(n)$. Also $ \mathbb{Q}(w_n)$ does not contain the number $i=\sqrt{-1}$ if and only if $n\not\equiv 0 \Mod 4$. Thus, if $n\not\equiv 0 \Mod 4$ then $[\mathbb{Q}(i,w_n):\mathbb{Q}(w_n) ]=2=[\mathbb{Q}(i), \mathbb{Q}]$, and therefore $$[\mathbb{Q}(i,w_n) : \mathbb{Q}(i)]=\frac{[\mathbb{Q}(i,w_n) : \mathbb{Q}(w_n)] \times [\mathbb{Q}(w_n) : \mathbb{Q}]}{ [\mathbb{Q}(i) : \mathbb{Q}]}= [\mathbb{Q}(w_n) : \mathbb{Q}]= \varphi(n).$$ Hence for $n\not\equiv 0 \Mod 4$, the polynomial $\Phi_n(x)$ is irreducible over $\mathbb{Q}(i)$. 
	
	Let $n \equiv 0 \Mod 4$. Then $ \mathbb{Q}(i,w_n)= \mathbb{Q}(w_n)$, and so $$[\mathbb{Q}(i,w_n) : \mathbb{Q}(i)] = \frac{[\mathbb{Q}(i,w_n) : \mathbb{Q}]}{[\mathbb{Q}(i) : \mathbb{Q}]}=\frac{\varphi(n)}{2}.$$ Hence the polynomial $\Phi_n(x)$ is not irreducible over $\mathbb{Q}(i)$.
	
	For $n \equiv 0 \Mod 4$, we factorize $\Phi_n(x)$ into two irreducible monic polynomials over $\mathbb{Q}(i)$. From Lemma~\ref{iocg4}, we know that $G_n(1)$ is a disjoint union of $G_n^1(1)$ and $G_n^3(1)$. Define $$\Phi_n^{1}(x)= \prod_{a\in G_n^1(1)}(x-w_n^a) \textnormal{ and } \Phi_n^3(x)= \prod_{a\in G_n^3(1)}(x-w_n^a).$$ It is clear from the definition that $\Phi_n(x)=\Phi_n^1(x)\Phi_n^3(x)$.

	\begin{lema}\label{factor} If $n\equiv 0\Mod 4$, then the following are true:
		\begin{enumerate}
			\item[(i)] $$x^{\frac{n}{4}} -i= \prod_{d\in D_n^1}\Phi_{n/d}^1(x) \prod_{d\in D_n^3} \Phi_{n/d}^3(x),$$
			\item[(ii)] $$x^{\frac{n}{4}} +i= \prod_{d\in D_n^1}\Phi_{n/d}^3(x) \prod_{d\in D_n^3} \Phi_{n/d}^1(x).$$
		\end{enumerate}
	\end{lema}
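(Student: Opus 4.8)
The plan is to identify the roots of $x^{n/4}\pm i$ explicitly and then regroup them by means of Lemma~\ref{iocg4}.

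First I would pin down the zeros of $x^{n/4}-i$. Since $w_n^{n/4}=\exp(\pi i/2)=i$, a power $w_n^q$ with $0\le q<n$ satisfies $(w_n^q)^{n/4}=i$ precisely when $q\cdot\tfrac n4\equiv\tfrac n4\Mod n$, i.e.\ when $q\equiv 1\Mod 4$; the set of such $q$ in $\{0,1,\dots,n-1\}$ is exactly $M_n^1(1)$. Because $x^{n/4}-i$ is coprime to its derivative $\tfrac n4 x^{n/4-1}$, it has $n/4$ distinct roots, and $|M_n^1(1)|=n/4$, so $$x^{n/4}-i=\prod_{q\in M_n^1(1)}(x-w_n^q).$$ In the same way, from $w_n^{3n/4}=-i$ one gets $x^{n/4}+i=\prod_{q\in M_n^3(1)}(x-w_n^q)$.

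Next I would rewrite these products in terms of the factors $\Phi_{n/d}^r$. Applying Lemma~\ref{iocg4}(iii) with $d=1$ and $g=n/4$ (so $n=4\cdot 1\cdot g$) expresses $M_n^1(1)$ as the disjoint union $\big(\bigcup_{h\in D_{n/4}^1}G_n^1(h)\big)\cup\big(\bigcup_{h\in D_{n/4}^3}G_n^3(h)\big)$. Since $n\equiv 0\Mod 4$, the integers $n$ and $n/4$ have the same odd part, hence $D_{n/4}^1=D_n^1$ and $D_{n/4}^3=D_n^3$. Using $G_n^r(h)=hG_{n/h}^r(1)$ together with $w_n^h=w_{n/h}$, each block satisfies $$\prod_{q\in G_n^r(h)}(x-w_n^q)=\prod_{a\in G_{n/h}^r(1)}(x-w_{n/h}^a)=\Phi_{n/h}^r(x).$$ Taking the product over the disjoint union gives (i); replacing Lemma~\ref{iocg4}(iii) by Lemma~\ref{iocg4}(iv) — which interchanges the superscripts $1$ and $3$ in the decomposition of $M_n^3(1)$ — and starting instead from $x^{n/4}+i=\prod_{q\in M_n^3(1)}(x-w_n^q)$ gives (ii).

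I expect the only delicate points to be bookkeeping: checking that the $n/4$ roots of $x^{n/4}\mp i$ are simple and are in bijection with $M_n^1(1)$ (resp.\ $M_n^3(1)$), and keeping track of the superscripts $r\in\{1,3\}$ through the substitution $q=ha$, $w_n^h=w_{n/h}$. A quick degree check confirms the identity: for every odd divisor $d$ of $n$ one has $n/d\equiv 0\Mod 4$, so $\deg\Phi_{n/d}^r=\varphi(n/d)/2$, and thus the right-hand side of (i) has degree $\tfrac12\sum_{d\in D_n}\varphi(n/d)=\tfrac12\cdot\tfrac n2=\tfrac n4=\deg(x^{n/4}-i)$, using the identity $\sum_{d\in D_n}\varphi(n/d)=n/2$ noted in the introduction.
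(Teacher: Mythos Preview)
Your proof is correct and follows essentially the same route as the paper's: identify the roots of $x^{n/4}\mp i$ with $M_n^1(1)$ (resp.\ $M_n^3(1)$), apply Lemma~\ref{iocg4} with $d=1$, and use $G_n^r(h)=hG_{n/h}^r(1)$ together with $w_n^h=w_{n/h}$ to recognise each block as $\Phi_{n/h}^r(x)$. One minor remark: your closing degree check appeals to $\sum_{d\in D_n}\varphi(n/d)=n/2$, which in the paper is a \emph{corollary} (Corollary~\ref{coronum}) of this very lemma, so that sanity check is circular---though your actual argument is complete without it.
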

	\begin{proof} 
		\begin{enumerate}
			\item[(i)] Note that $|M_n^1(1)|=\frac{n}{4}$ and $w_n^a$ is a root of $x^{\frac{n}{4}} -i $ for all $a\in M_n^1(1)$. Therefore
			\begin{equation*}
				\begin{split}
					x^{\frac{n}{4}} -i &= \prod_{a\in M_n^1(1)} (x-w_n^a)\\
					&= \prod_{h\in D_n^1} \prod_{a\in G_n^1(h)}(x-w_n^a) \prod_{h\in D_n^3} \prod_{a\in G_n^3(h)}(x-w_n^a), \hspace{1cm} (\textnormal{ using Lemma }\ref{iocg4})\\
					&= \prod_{h\in D_n^1} \prod_{a\in hG_{n/h}^1(1)}(x-w_n^a) \prod_{h\in D_n^3} \prod_{a\in hG_{n/h}^3(1)}(x-w_n^a)\\
					&= \prod_{h\in D_n^1} \prod_{a\in G_{n/h}^1(1)}(x-(w_n^h)^a) \prod_{h\in D_n^3} \prod_{a\in G_{n/h}^3(1)}(x-(w_n^h)^a)\\
					&= \prod_{h\in D_n^1}\Phi_{n/h}^1(x) \prod_{h\in D_n^3} \Phi_{n/h}^3(x).
				\end{split} 
			\end{equation*} 
			In the last line, we use the fact that $w_n^h=\exp(\frac{2\pi i}{n/h})$ is a primitive $\frac{n}{h}$-th root of unity.
			\item[(ii)] Note that $|M_n^3(1)|=\frac{n}{4}$ and $w_n^a$ is a root of $x^{\frac{n}{4}} +i $ for all $a\in M_n^3(1)$. Now proceed as in Part $(i)$ to get the proof of this part.
		\end{enumerate}
	\end{proof}

	\begin{corollary}\label{monic}
		Let $n\equiv 0\Mod 4$. The factors $\Phi_n^1(x)$ and $\Phi_n^3(x)$ of $\Phi_n(x)$ are monic polynomials in $\mathbb{Z}(i)[x]$ of degree $\varphi(n)/2$.
	\end{corollary}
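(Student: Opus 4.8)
The plan is induction on $n$ through the positive multiples of $4$, combining the factorizations of $x^{n/4}\mp i$ from Lemma~\ref{factor} with the elementary fact that division by a monic polynomial does not leave the coefficient ring. The degree claim can be disposed of first, separately from the induction: since $\gcd(n-k,n)=\gcd(k,n)$, the map $k\mapsto n-k$ is a bijection of $G_n(1)$ onto itself, and because $n\equiv 0\Mod 4$ we have $k\equiv 1\Mod 4$ precisely when $n-k\equiv 3\Mod 4$; hence this bijection carries $G_n^1(1)$ onto $G_n^3(1)$. By Lemma~\ref{iocg4}, $G_n(1)=G_n^1(1)\cup G_n^3(1)$ is a disjoint union, so $|G_n^1(1)|=|G_n^3(1)|=\varphi(n)/2$, giving $\deg\Phi_n^1=\deg\Phi_n^3=\varphi(n)/2$. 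Monicity is immediate, since each of $\Phi_n^1,\Phi_n^3$ is a product of monic linear factors.

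For the coefficients, the base case $n=4$ is direct: $\Phi_4^1(x)=x-i$ and $\Phi_4^3(x)=x+i$. For the inductive step, fix $n\equiv 0\Mod 4$ with $n>4$ and assume the result for all smaller multiples of $4$. Every odd divisor $d$ of $n$ divides $n/4$, so $n/d$ is again a multiple of $4$, and $n/d<n$ when $d>1$; thus $\Phi_{n/d}^1,\Phi_{n/d}^3\in\mathbb{Z}[i][x]$ for every $d\in D_n$ with $d>1$. Since $1\in D_n^1$, isolating that term in Lemma~\ref{factor}(i) yields
\begin{equation*}
x^{n/4}-i=\Phi_n^1(x)\,Q(x),\qquad Q(x):=\prod_{\substack{d\in D_n^1\\ d>1}}\Phi_{n/d}^1(x)\prod_{d\in D_n^3}\Phi_{n/d}^3(x),
\end{equation*}
where $Q$ is a monic polynomial in $\mathbb{Z}[i][x]$ by the induction hypothesis.

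Now apply the division algorithm: dividing $x^{n/4}-i\in\mathbb{Z}[i][x]$ by the monic $Q\in\mathbb{Z}[i][x]$ gives a quotient and remainder in $\mathbb{Z}[i][x]$, with the remainder of degree below $\deg Q$; uniqueness of such a division over $\mathbb{C}$ (valid because $Q$ is monic) forces the quotient to be $\Phi_n^1$ and the remainder to be $0$, so $\Phi_n^1\in\mathbb{Z}[i][x]$. Then, because $\Phi_n(x)=\Phi_n^1(x)\Phi_n^3(x)$ with $\Phi_n\in\mathbb{Z}[x]\subseteq\mathbb{Z}[i][x]$ and $\Phi_n^1$ monic in $\mathbb{Z}[i][x]$, dividing $\Phi_n$ by $\Phi_n^1$ gives $\Phi_n^3\in\mathbb{Z}[i][x]$ (alternatively, repeat the previous step using Lemma~\ref{factor}(ii) and $x^{n/4}+i$). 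This closes the induction.

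The only subtle ingredient is the ``monic division keeps coefficients in the base ring'' step --- entirely standard, but it is precisely why Lemma~\ref{factor} is worth having in its monic form, and everything else (checking $1\in D_n^1$, that $n/d$ is a multiple of $4$ smaller than $n$, the possibility that $D_n^3=\emptyset$) is routine bookkeeping. If one prefers to bypass the induction, one can instead observe via Galois theory that $\Phi_n^1$ is the minimal polynomial of $w_n$ over $\mathbb{Q}(i)$ --- the automorphism $w_n\mapsto w_n^a$ fixes $i=w_n^{n/4}$ exactly when $a\equiv 1\Mod 4$ --- whence its coefficients lie in $\mathbb{Q}(i)$; being symmetric functions of roots of $x^n-1$ they are algebraic integers, hence lie in $\mathbb{Z}[i]$, and $\Phi_n^3$ follows as before.
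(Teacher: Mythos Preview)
Your proof is correct and follows essentially the same approach as the paper: induction on multiples of $4$, using Lemma~\ref{factor} to express $x^{n/4}\mp i$ as $\Phi_n^r(x)$ times a monic polynomial that lies in $\mathbb{Z}[i][x]$ by the induction hypothesis, and then invoking division by a monic polynomial. You supply a bit more detail than the paper (the explicit bijection $k\mapsto n-k$ for $|G_n^1(1)|=|G_n^3(1)|$, the verification that $n/d$ is a smaller multiple of $4$, and the optional Galois-theoretic alternative), but the core argument is the same.
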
 
	\begin{proof}
		By definition, $\Phi_n^1(x)$ and $\Phi_n^3(x)$ are monic. Also, $G_n(1)=G_n^1(1)\cup G_n^3(1)$ is a disjoint union and that $|G_n^1(1)|=|G_n^3(1)|$. Therefore the degree of $\Phi_n^1(x)$ and $\Phi_n^3(x)$ is $\varphi(n)/2$. Now we proceed by induction on $n$ to show that $\Phi_n^1(x), \Phi_n^3(x)\in \mathbb{Z}(i)[x]$. For $n=4$, the polynomials $\Phi_4^1(x)= x-i$ and $\Phi_4^3(x)=x+i$ are clearly in $\mathbb{Z}(i)[x]$. Assume  that $\Phi_k^1(x)$ and $\Phi_k^3(x)$ are in $\mathbb{Z}(i)[x]$ for $k<n$ and $k\equiv 0\Mod 4$. By Lemma~\ref{factor}, $\Phi_n^1(x) = \frac{x^{\frac{n}{4}} -i}{f(x)}$ and $\Phi_n^3(x) = \frac{x^{\frac{n}{4}} +i}{g(x)}$, where $f(x)$ and $g(x)$ are monic polynomials. By induction hypothesis $f(x),g(x) \in \mathbb{Z}(i)[x]$. It now follows by ``long division'' that $\Phi_n^1(x)\in \mathbb{Z}(i)[x]$ and $\Phi_n^3(x)\in \mathbb{Z}(i)[x]$. Hence the proof is complete by induction
	\end{proof}
	
	\begin{corollary}\label{coronum}
		Let $n\equiv 0\Mod 4$. Then $$ \frac{n}{2}= \sum\limits_{d\in D_n} \varphi \left(\frac{n}{d}\right).$$
	\end{corollary}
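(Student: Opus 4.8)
The plan is to compare degrees on the two sides of the factorization in Lemma~\ref{factor}(i). First I would observe that for every odd divisor $d$ of $n$ we have $n/d\equiv 0\Mod 4$, since $n\equiv 0\Mod 4$ and $d$ is odd; hence Corollary~\ref{monic} applies to the index $n/d$, and both $\Phi_{n/d}^1(x)$ and $\Phi_{n/d}^3(x)$ are polynomials of degree $\varphi(n/d)/2$.

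Next I would take the identity
$$x^{\frac{n}{4}}-i=\prod_{d\in D_n^1}\Phi_{n/d}^1(x)\prod_{d\in D_n^3}\Phi_{n/d}^3(x)$$
from Lemma~\ref{factor}(i) and equate the degree of the left-hand side, which is $n/4$, with the sum of the degrees of the factors on the right-hand side. By the previous step this sum equals $\sum_{d\in D_n^1}\varphi(n/d)/2+\sum_{d\in D_n^3}\varphi(n/d)/2$. Since every odd divisor of $n$ is congruent to either $1$ or $3$ modulo $4$, we have the disjoint decomposition $D_n=D_n^1\cup D_n^3$, so the two partial sums combine into $\tfrac12\sum_{d\in D_n}\varphi(n/d)$. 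Equating this with $n/4$ and multiplying by $2$ yields the claimed identity $\tfrac n2=\sum_{d\in D_n}\varphi(n/d)$.

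There is essentially no hard step here: the whole argument is a degree count, and the only point requiring a moment's care is checking that Corollary~\ref{monic} applies to each factor, i.e. that $n/d$ is again divisible by $4$, which is immediate. As a sanity check one could instead argue directly by writing $n=2^a m$ with $a\ge 2$ and $m$ odd: the odd divisors of $n$ are exactly the divisors of $m$, and multiplicativity of $\varphi$ together with the classical identity $\sum_{e\mid m}\varphi(e)=m$ gives $\sum_{d\mid m}\varphi(n/d)=\varphi(2^a)\sum_{d\mid m}\varphi(m/d)=2^{a-1}m=n/2$. One could equally well derive the statement from Lemma~\ref{factor}(ii), whose right-hand side has the same total degree.
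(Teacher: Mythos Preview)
Your argument is correct and is exactly the paper's approach: the paper's proof consists of the single sentence ``We get the desired equality by comparing the degree of the polynomials in Lemma~\ref{factor}.'' Your write-up simply spells out this degree count (and adds a harmless independent sanity check via $n=2^a m$), so there is nothing to change.
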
 
	\begin{proof}
		We get the desired equality by comparing the degree of the polynomials in Lemma~\ref{factor}.
	\end{proof}
	
	\begin{corollary}
		Let $n$ be an positive even integer. Then $$ \frac{n}{2}= \sum\limits_{d\in D_n} \varphi \left(\frac{n}{d} \right).$$
	\end{corollary}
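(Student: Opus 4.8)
The plan is to split according to the residue of $n$ modulo $4$. If $n\equiv 0\Mod 4$, the identity is exactly Corollary~\ref{coronum}, so nothing new is needed. The only remaining case is $n\equiv 2\Mod 4$, which I would handle directly by a short elementary computation rather than by adapting the cyclotomic‑factorization argument of Lemma~\ref{factor}.

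So suppose $n\equiv 2\Mod 4$ and write $n=2m$ with $m$ odd. First I would observe that the set $D_n$ of odd divisors of $n=2m$ coincides with the set of all divisors of $m$ (an odd divisor of $2m$ cannot absorb the factor $2$, and conversely every divisor of $m$ is odd and divides $2m$). Next, for each $d\mid m$ I would rewrite $n/d=2\,(m/d)$, where $m/d$ is odd, hence $\gcd(2,m/d)=1$; multiplicativity of Euler's totient then gives $\varphi(n/d)=\varphi(2)\,\varphi(m/d)=\varphi(m/d)$. Therefore
\[
\sum_{d\in D_n}\varphi\!\left(\frac{n}{d}\right)=\sum_{d\mid m}\varphi\!\left(\frac{m}{d}\right)=\sum_{e\mid m}\varphi(e)=m=\frac{n}{2},
\]
where the penultimate equality is the classical Gauss identity $\sum_{e\mid m}\varphi(e)=m$ and the middle step is just re‑indexing the divisor sum via $e=m/d$.

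There is essentially no obstacle here: the argument is a routine divisor‑sum manipulation, and the only point requiring a moment's care is the correct identification of $D_n$ with the divisors of $m$ together with the elementary fact $\varphi(2k)=\varphi(k)$ for odd $k$. (Alternatively, one could give a single uniform proof by writing $n=2^{a}m$ with $m$ odd and $a\ge 1$, noting $\varphi(2^{a}(m/d))=2^{a-1}\varphi(m/d)$ for $d\mid m$, so that $\sum_{d\mid m}\varphi(n/d)=2^{a-1}\sum_{d\mid m}\varphi(m/d)=2^{a-1}m=n/2$; this bypasses the case split entirely, but the two‑case version has the advantage of reusing Corollary~\ref{coronum} verbatim.)
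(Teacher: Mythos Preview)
Your proof is correct, but the paper takes a different (and slicker) route. Rather than splitting on the residue of $n$ modulo $4$, the paper simply doubles: since $n$ is even, $2n\equiv 0\Mod 4$, so Corollary~\ref{coronum} applied to $2n$ gives $n=\sum_{d\in D_{2n}}\varphi(2n/d)$; then one observes that $D_{2n}=D_n$ and that $\varphi(2n/d)=2\varphi(n/d)$ for each $d\in D_n$ (because $n/d$ is even, so $2n/d\equiv 0\Mod 4$), and divides by $2$. This single-step reduction reuses Corollary~\ref{coronum} uniformly and avoids handling $n\equiv 2\Mod 4$ separately. Your parenthetical ``uniform'' argument via $n=2^{a}m$ and the Gauss identity is in fact more elementary than either approach, since it never touches the cyclotomic machinery behind Corollary~\ref{coronum} at all; it would serve as a self-contained proof of both corollaries at once. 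What the paper's doubling trick buys is brevity and a clean logical dependence on the already-established $n\equiv 0\Mod 4$ case; what your direct argument buys is independence from Lemma~\ref{factor}.
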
 
	\begin{proof}
		Since $2n\equiv 0\Mod 4$ and $\frac{2n}{d}\equiv 0 \Mod 4$ for all $d\in D_n$, so $\varphi ( \frac{2n}{d} )=2\varphi ( \frac{n}{d} )$ for $d\in D_n=D_{2n}$. Thus by Corollary~\ref{coronum}, $ \frac{2n}{2}= \sum\limits_{d\in D_{2n}} \varphi ( \frac{2n}{d} )=\sum\limits_{d\in D_{n}} 2 \varphi ( \frac{n}{d} ).$
	\end{proof}
	
	\begin{corollary}
		Let $n=2^k$ for $k\geq 2$. Then $ \Phi_n^1(x)=x^{\frac{n}{4}} +i \textnormal{ and } \Phi_n^3(x)=x^{\frac{n}{4}} -i. $
	\end{corollary}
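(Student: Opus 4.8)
The plan is to read both identities straight off the factorization in Lemma~\ref{factor}, exploiting the fact that a power of two carries almost no odd divisors. First I would settle the divisor bookkeeping: since $n=2^k$ has no odd prime factor, its only odd divisor is $1$, so $D_n=\{1\}$; and because $1\equiv 1\Mod 4$, this refines to $D_n^1=\{1\}$ and $D_n^3=\emptyset$. This is the sole input needed from the arithmetic of $n$, and it is immediate.

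Next I would substitute this data into the two product formulas of Lemma~\ref{factor}. Each product indexed by $D_n^3$ is empty, and each product indexed by $D_n^1$ consists of a single term, the one coming from $d=1$. Thus both right-hand sides collapse to a single cyclotomic factor, one of them $\Phi_n^1(x)$ and the other $\Phi_n^3(x)$, and equating these surviving factors with the left-hand sides $x^{n/4}+i$ and $x^{n/4}-i$ delivers the two claimed identities $\Phi_n^1(x)=x^{n/4}+i$ and $\Phi_n^3(x)=x^{n/4}-i$. No induction is required, since the products already reduce to single factors.

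As an independent check I would verify the binomial shapes directly from the definitions: for $n=2^k$ the index sets $G_n^1(1)$ and $G_n^3(1)$ are precisely the integers in $[1,n-1]$ congruent to $1$ and to $3$ modulo $4$, because every such integer is automatically coprime to $n$. Since $(w_n^a)^{n/4}=i^{a}$ takes a value in $\{i,-i\}$ determined solely by the residue of $a$ modulo $4$, each of $\Phi_n^1(x)=\prod_{a\in G_n^1(1)}(x-w_n^a)$ and $\Phi_n^3(x)=\prod_{a\in G_n^3(1)}(x-w_n^a)$ has as its roots exactly the $(n/4)$-th roots of one of $\pm i$, forcing each to be a binomial of the form $x^{n/4}\pm i$. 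The computation itself is routine; the one point that genuinely needs care, and the step I expect to be the main obstacle, is fixing the correct pairing of each binomial $x^{n/4}\pm i$ with $\Phi_n^1$ versus $\Phi_n^3$, that is, tracking the sign of $i^{a}$ against the residue class of $a$ modulo $4$. Once that pairing is settled, the corollary follows at once from Lemma~\ref{factor}.
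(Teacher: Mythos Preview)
Your approach is exactly the intended one: the corollary is meant to drop out of Lemma~\ref{factor} once one notes that $n=2^k$ has $D_n=\{1\}$, hence $D_n^1=\{1\}$ and $D_n^3=\emptyset$, so each product collapses to a single factor. The paper offers no separate proof, and none is needed.

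There is, however, a sign slip in the pairing---precisely the step you flagged as the one needing care. Substituting $D_n^1=\{1\}$ and $D_n^3=\emptyset$ into Lemma~\ref{factor}(i) yields $x^{n/4}-i=\Phi_n^1(x)$, and part~(ii) yields $x^{n/4}+i=\Phi_n^3(x)$; this is the reverse of what you (and the printed corollary) state. Your own direct check confirms this: for $a\equiv 1\Mod 4$ one has $i^a=i$, so the roots $w_n^a$ of $\Phi_n^1$ satisfy $(w_n^a)^{n/4}=i$, making $\Phi_n^1(x)=x^{n/4}-i$. The case $n=4$ in the proof of Corollary~\ref{monic}, where it is recorded that $\Phi_4^1(x)=x-i$, corroborates this. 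So the corollary as printed carries a typo in the signs, and your proof, once the pairing is read off correctly from Lemma~\ref{factor}, establishes $\Phi_n^1(x)=x^{n/4}-i$ and $\Phi_n^3(x)=x^{n/4}+i$.
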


	\begin{theorem}
		Let $n\equiv 0\Mod 4$. The factors $\Phi_n^1(x)$ and $\Phi_n^3(x)$ of $\Phi_n(x)$ are irreducible monic polynomials in $\mathbb{Q}(i)[x]$ of degree $\frac{\varphi(n)}{2}$.
	\end{theorem}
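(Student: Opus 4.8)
The plan is to identify each of $\Phi_n^1(x)$ and $\Phi_n^3(x)$ with the minimal polynomial over $\mathbb{Q}(i)$ of a suitable primitive $n$-th root of unity, and then to conclude irreducibility from the degree count already established just before Lemma~\ref{factor}.

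First I would recall the two ingredients supplied by the excerpt. By Corollary~\ref{monic}, both $\Phi_n^1(x)$ and $\Phi_n^3(x)$ are monic polynomials in $\mathbb{Z}(i)[x]\subseteq\mathbb{Q}(i)[x]$ of degree $\varphi(n)/2$. Moreover, since $n\equiv 0\Mod 4$, the computation preceding Lemma~\ref{factor} gives $\mathbb{Q}(i,w_n)=\mathbb{Q}(w_n)$ and $[\mathbb{Q}(i,w_n):\mathbb{Q}(i)]=\varphi(n)/2$; hence the minimal polynomial of $w_n$ over $\mathbb{Q}(i)$ has degree exactly $\varphi(n)/2$. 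More generally, every primitive $n$-th root of unity $\zeta$ satisfies $\mathbb{Q}(\zeta)=\mathbb{Q}(w_n)$, so its minimal polynomial over $\mathbb{Q}(i)$ also has degree $\varphi(n)/2$.

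Next I would handle $\Phi_n^1(x)$. Since $1\in G_n^1(1)$, the number $w_n$ is a root of $\Phi_n^1(x)$, so the minimal polynomial $m(x)$ of $w_n$ over $\mathbb{Q}(i)$ divides $\Phi_n(x)=\Phi_n^1(x)\Phi_n^3(x)$ in $\mathbb{Q}(i)[x]$. Being irreducible, $m(x)$ divides one of the two factors; but the powers $w_n^a$ with $a\in G_n(1)$ are pairwise distinct and $1\notin G_n^3(1)$, so $w_n$ is not a root of $\Phi_n^3(x)$, forcing $m(x)\mid\Phi_n^1(x)$. Comparing monic polynomials of equal degree $\varphi(n)/2$ gives $\Phi_n^1(x)=m(x)$, which is irreducible over $\mathbb{Q}(i)$. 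For $\Phi_n^3(x)$ I would run the identical argument with $\zeta:=w_n^{n-1}=w_n^{-1}$: here $n\equiv 0\Mod 4$ forces $n-1\equiv 3\Mod 4$ and $\gcd(n-1,n)=1$, so $n-1\in G_n^3(1)$ and $\zeta$ is a root of $\Phi_n^3(x)$ but not of $\Phi_n^1(x)$; since $\zeta$ is primitive, its minimal polynomial $m'(x)$ over $\mathbb{Q}(i)$ has degree $\varphi(n)/2$, divides $\Phi_n(x)$, hence divides $\Phi_n^3(x)$, and so $\Phi_n^3(x)=m'(x)$ is irreducible over $\mathbb{Q}(i)$.

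The step I expect to be the crux is the degree matching: the whole argument rests on $[\mathbb{Q}(i,w_n):\mathbb{Q}(i)]=\varphi(n)/2$, which is exactly what guarantees that an irreducible minimal polynomial fills up a whole factor $\Phi_n^1$ or $\Phi_n^3$ rather than a proper divisor of it. If one preferred a more structural formulation, the same conclusion follows by noting that $\mathrm{Gal}(\mathbb{Q}(w_n)/\mathbb{Q}(i))=\{a\in(\mathbb{Z}/n\mathbb{Z})^*:a\equiv 1\Mod 4\}$ has index $2$ in $(\mathbb{Z}/n\mathbb{Z})^*$ with cosets $G_n^1(1)$ and $G_n^3(1)$, so $\Phi_n(x)$ has precisely two irreducible factors over $\mathbb{Q}(i)$, whose root sets are $\{w_n^a:a\in G_n^1(1)\}$ and $\{w_n^a:a\in G_n^3(1)\}$; these must be $\Phi_n^1(x)$ and $\Phi_n^3(x)$.
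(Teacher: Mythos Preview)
Your proof is correct and follows essentially the same approach as the paper: both identify $\Phi_n^1(x)$ and $\Phi_n^3(x)$ with the minimal polynomials over $\mathbb{Q}(i)$ of suitable primitive $n$-th roots of unity, using the degree equality $[\mathbb{Q}(i,w_n):\mathbb{Q}(i)]=\varphi(n)/2$ together with Corollary~\ref{monic}. The paper is marginally more direct (it observes at once that the minimal polynomial of $w_n$ divides $\Phi_n^1(x)$, rather than routing through $\Phi_n(x)=\Phi_n^1(x)\Phi_n^3(x)$), and your choice of the explicit root $w_n^{n-1}$ for $\Phi_n^3(x)$ is a bit more concrete than the paper's ``for $a\in G_n^3(1)$'', but these are stylistic differences only.
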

	\begin{proof}
		In view of Corollary~\ref{monic}, we only need to show that $\Phi_n^1(x)$ and $\Phi_n^3(x)$ are irreducible in $\mathbb{Q}(i)$. For $n\equiv 0\Mod 4$, we have $[\mathbb{Q}(i,w_n) : \mathbb{Q}(i)] = \frac{\varphi(n)}{2}$. So there is a unique irreducible monic polynomial $p(x)\in \mathbb{Q}(i)[x]$ of degree $\frac{\varphi(n)}{2}$ having $w_n$ as a root. Since $w_n$ is also a root of $\Phi_n^1(x)$, so $\Phi_n^1(x)=p(x) f(x)$, where $f(x)\in \mathbb{Q}(i)[x]$. As $\Phi_n^1(x)$ is a monic polynomial of degree $\varphi(n)/2$, so $f(x)=1$. Hence $\Phi_n^1(x)=p(x)$, that is, $\Phi_n^1(x)$ is irreducible. Similarly, for $a\in G_n^3(1)$, we have $[\mathbb{Q}(i,w_n^a) : \mathbb{Q}(i)] = \frac{\varphi(n)}{2}$. This, along with Corollary~\ref{monic}, gives that $\Phi_n^3(x)$ is irreducible.
	\end{proof}

	\section{Characterization of mixed circulant graphs}

	For a fixed $n\equiv 0\Mod 4$ and $n\geq 4$, let $d\mid \frac{n}{4}$. Define $u(d)$ to be the $(n-1)$-vector, where
	
	$$u(d)_k= \left\{ \begin{array}{rl}
		1 & \mbox{if } k\in G_n^1(d) \\
		-1 & \mbox{if } k\in G_n^3(d)\\ 
		0 &   \mbox{otherwise}. 
	\end{array}\right.	$$ 
Let $E=[e_{st}]$ be an $(n-1)\times(n-1)$ matrix defined by 
$$e_{st}=iw_n^{st}.$$ 
Note that $E$ is invertible. Using Theorem ~\ref{iocg5}, we get $E u(d) \in \mathbb{Z}^{n-1}$. 
	Define $\mathbb{U}=\{ u\in \mathbb{Q}^{n-1}: Eu\in \mathbb{Q}^{n-1} \}$ so that $u(d)\in \mathbb{U}$. Let span$_{\mathbb{Q}}\{u(d): d\mid \frac{n}{4}\}$ be the set of all linear combinations of $\{u(d): d\mid \frac{n}{4}\}$ over $\mathbb{Q}$. Therefore span$_{\mathbb{Q}}\{u(d): d\mid \frac{n}{4}\}$ is a subspace of $\mathbb{U}$ over $\mathbb{Q}$. Let $ E(\mathbb{U})=\{ Eu: u \in \mathbb{U} \}$. Now we propose the following lemma.

	\begin{lema}\label{iocg11}
		Let $n\equiv 0\Mod 4$. Then $E (\mathbb{U}) \subseteq$ span$_{\mathbb{Q}}\{u(d): d\mid \frac{n}{4}\}$.
	\end{lema}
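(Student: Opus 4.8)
The plan is to realise each element of $E(\mathbb{U})$ as the vector of values of one fixed rational polynomial on the nonzero $n$-th roots of unity, and then to use the Galois action of $\mathrm{Gal}(\mathbb{Q}(w_n)/\mathbb{Q})$ --- made explicit through the identity $i=w_n^{n/4}$, which is available because $n\equiv 0\Mod 4$ --- to determine those values. Concretely, fix $u\in\mathbb{U}$, put $v=Eu$ and $P(x)=\sum_{t=1}^{n-1}u_t x^t\in\mathbb{Q}[x]$; then $v_s=i\,P(w_n^s)$ for $1\le s\le n-1$, and each $v_s$ is rational by the definition of $\mathbb{U}$.

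The first step is to locate the support of $v$. Fix $s$ and set $e=\gcd(s,n)$, $m=n/e$, so that $w_n^s$ is a primitive $m$-th root of unity and $P(w_n^s)\in\mathbb{Q}(w_n^s)=\mathbb{Q}(w_m)$. Since $i$ lies in $\mathbb{Q}(w_m)$ precisely when $4\mid m$, the membership $v_s=i\,P(w_n^s)\in\mathbb{Q}\subseteq\mathbb{Q}(w_m)$ forces $P(w_n^s)=0$, hence $v_s=0$, whenever $4\nmid m$. As $4\nmid n/\gcd(s,n)$ is equivalent to $\gcd(s,n)\nmid\frac{n}{4}$, this gives $\mathrm{supp}(v)\subseteq\bigcup_{e\mid n/4}G_n(e)$.

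The second step is the Galois argument inside each $G_n(e)$ with $e\mid\frac{n}{4}$; here $m:=n/e$ satisfies $4\mid m$, so $i=w_m^{m/4}\in\mathbb{Q}(w_m)$ and $\sigma_c(i)=i^{c}$ for every $\sigma_c\in\mathrm{Gal}(\mathbb{Q}(w_m)/\mathbb{Q})\cong(\mathbb{Z}/m)^{\ast}$. Writing $s=ek$ with $\gcd(k,m)=1$, so $w_n^s=w_m^{k}$, and applying $\sigma_c$ to the rational number $v_s=i\,P(w_m^{k})$, one gets $v_s=\sigma_c(v_s)=i^{c}P(w_m^{ck})=i^{c-1}v_{s'}$, where $s'=e\,(ck\bmod m)\in G_n(e)$. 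Hence $v_s=v_{s'}$ if $c\equiv 1\Mod 4$ and $v_s=-v_{s'}$ if $c\equiv 3\Mod 4$. Because the residues in $(\mathbb{Z}/m)^{\ast}$ that are $\equiv 1\Mod 4$ form an index-$2$ subgroup which acts transitively by multiplication on each of the two classes $\{k'\in(\mathbb{Z}/m)^{\ast}:k'\equiv 1\Mod 4\}$ and $\{k'\in(\mathbb{Z}/m)^{\ast}:k'\equiv 3\Mod 4\}$, the vector $v$ is constant on $G_n^1(e)$ --- say with value $c_e$ --- and constant on $G_n^3(e)$; choosing a $c\equiv 3\Mod 4$ carries $G_n^1(e)$ onto $G_n^3(e)$, pinning the latter constant to $-c_e$. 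Combined with the support statement, this yields $v=\sum_{e\mid n/4}c_e\,u(e)$, i.e.\ $v\in\mathrm{span}_{\mathbb{Q}}\{u(d):d\mid\frac{n}{4}\}$; since $u\in\mathbb{U}$ was arbitrary, the lemma follows.

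The part I expect to be the real obstacle --- or at least the part that must be written out carefully rather than asserted --- is the transitivity claim in the last step: that multiplication by units $\equiv 1\Mod 4$ is transitive on each of the classes $\{k'\equiv 1\Mod 4\}$ and $\{k'\equiv 3\Mod 4\}$ in $(\mathbb{Z}/m)^{\ast}$, and that a unit $\equiv 3\Mod 4$ swaps the two. This is elementary (the reduction $(\mathbb{Z}/m)^{\ast}\to(\mathbb{Z}/4)^{\ast}$ is surjective when $4\mid m$) but is exactly where the hypothesis $4\mid m$, equivalently $e\mid\frac{n}{4}$, is used essentially, and it is what forces $G_n^1(e)$ and $G_n^3(e)$ --- and not some coarser union --- to be the correct building blocks. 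Everything else is bookkeeping with the partition of $\{1,\dots,n-1\}$ into the sets $G_n(e)$ and the splitting $G_n(e)=G_n^1(e)\cup G_n^3(e)$ from Section~3. Note that the invertibility of $E$ is not needed for this inclusion; it will presumably be used only for the reverse inclusion $\mathrm{span}_{\mathbb{Q}}\{u(d):d\mid\frac{n}{4}\}\subseteq E(\mathbb{U})$.
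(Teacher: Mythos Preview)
Your proof is correct. Both arguments ultimately rest on the same Galois-theoretic fact, but the packaging is genuinely different from the paper's, and it is worth noting what each buys.

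The paper proves the lemma by appealing to the machinery it built in Section~4: for $d\mid\frac{n}{4}$ it uses the irreducibility of $\Phi_{n/d}^{1}(x)$ over $\mathbb{Q}(i)$ to force the polynomial $p(x)=\sum_j iu_jx^j-v_s\in\mathbb{Q}(i)[x]$ to vanish at every $w_n^t$ with $t\in G_n^1(d)$, giving constancy there; the antisymmetry $v_s=-v_{n-s}$ is obtained separately by complex conjugation; and the vanishing on $G_n(d)$ for $d\nmid\frac{n}{4}$ comes from the irreducibility of $\Phi_{n/d}$ over $\mathbb{Q}(i)$ combined with that antisymmetry. Your argument bypasses Section~4 entirely: you work with the rational polynomial $P(x)=\sum_t u_tx^t$, use $i\notin\mathbb{Q}(w_m)$ when $4\nmid m$ to kill the support directly, and then let the automorphisms $\sigma_c$ of $\mathbb{Q}(w_m)/\mathbb{Q}$ act, exploiting $\sigma_c(i)=i^{c}$ to read off both the constancy on $G_n^1(e)$ and the sign flip on $G_n^3(e)$ in one stroke. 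This is more self-contained and makes the role of the hypothesis $4\mid m$ (equivalently $e\mid\frac{n}{4}$) completely transparent via the surjectivity of $(\mathbb{Z}/m)^{\ast}\to(\mathbb{Z}/4)^{\ast}$. The paper's route, on the other hand, showcases and justifies its factorisation $\Phi_n=\Phi_n^1\Phi_n^3$ over $\mathbb{Q}(i)$; your transitivity statement is of course exactly the content of that irreducibility, so the two proofs are equivalent at the level of ideas but organised around different lemmas.
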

	\begin{proof}
		Let $u=[u_1,u_2,...,u_{n-1}]^{t} \in U$ and $v=E u=[v_1,v_2,...,v_{n-1}]^t$. Since $G_n(d_1) \cap G_n(d_2) = \emptyset$ for $d_1\neq d_2$ and $\bigcup\limits_{d \mid n}G_n(d)=\{ 1,2,...,n-1\}$, it is enough to show the following:
		\begin{enumerate}[label=(\roman*)]
			\item $v_s=-v_{n-s}$ for all $1\leq s \leq n-1$,
			\item If $d \mid \frac{n}{4}$ then $v_s=v_t$ for all $s,t\in G_n^1(d)$, and
			\item If $d \nmid \frac{n}{4}, d \mid n$ and $d<n$ then $v_r=0$ for all $r\in G_n(d)$.
		\end{enumerate} 
		
		Since $Eu=[v_1,v_2,...,v_{n-1}]^t$ for $1\leq s \leq n-1$, we get $$v_{n-s}=\sum\limits_{j=1}^{n-1}iu_jw_n^{(n-s)j}=\sum\limits_{j=1}^{n-1}iu_jw_n^{-sj}= \overline{\sum\limits_{j=1}^{n-1}(-i)u_jw_n^{sj}}=-\overline{v_s}=-v_s.$$
		
		Assume that $d \mid \frac{n}{4}$ and $s,t\in G_n^1(d)$, so that $\frac{s}{d},\frac{t}{d}\in G_{\frac{n}{d}}^1(1)$ and $w_n^s$, $w_n^t$ are roots of $\Phi_{\frac{n}{d}}^1(x)$. Again, $v_{s}=\sum\limits_{j=1}^{n-1}iu_jw_n^{sj} \in \mathbb{Q}$, and so $w_n^s$ is a root of the polynomial $p(x)=\sum\limits_{j=1}^{n-1}iu_jx^j-v_s \in \mathbb{Q}(i)[x]$. Therefore $p(x)$ is a multiple of the irreducible monic polynomial $\Phi_{\frac{n}{d}}^1(x)$, and so $w_n^t$ is also a root of $p(x)$, \emph{i.e.}, $v_{s}=\sum\limits_{j=1}^{n-1}iu_jw_n^{tj}=v_t$. Hence $v_s=v_t$. 
		
		Now assume that $d \nmid \frac{n}{4}, d \mid n$, $d<n$ and $r\in G_n(d)$. Then $r,n-r\in G_n(d)$ and $w_n^r$, $w_n^{n-r}$ are roots of $\Phi_{\frac{n}{d}}(x)$. Again $v_{r}=\sum\limits_{j=1}^{n-1}iu_jw_n^{rj} \in \mathbb{Q}$, and hence $w_n^r$ is a root of the polynomial in $q(x)=\sum\limits_{j=1}^{n-1}iu_jx^j-v_r \in \mathbb{Q}(i)[x]$. Therefore $q(x)$ is a multiple of the irreducible monic polynomial $\Phi_{\frac{n}{d}}(x)$. Note that $\Phi_{\frac{n}{d}}(x)$ is irreducible over $\mathbb{Q}(i)$ as $n \not\equiv 0 \Mod 4$.
		Thus $w_n^{n-r}$ is also a root of $q(x)$, \emph{i.e.}, $v_{r}=\sum\limits_{j=1}^{n-1}iu_jw_n^{(n-r)j}=v_{n-r}$.  This gives $v_r=v_{n-r}$. Now $v_r=v_{n-r}$ and $v_r=-v_{n-r}$ altogether give $v_r=v_{n-r}=0$.
	\end{proof}
	
	The next theorem is crucial in proving the necessity of the sufficient condition of Corollary~\ref{iocg6}. 
	
	\begin{theorem}\label{iocg12}
		Let $n\equiv 0\Mod 4$. Then $\mathbb{U}=$ span$_{\mathbb{Q}}\{u(d): d\mid \frac{n}{4} \}$.
	\end{theorem}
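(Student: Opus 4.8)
The plan is to prove the two inclusions separately. The inclusion span$_{\mathbb{Q}}\{u(d):d\mid \frac{n}{4}\}\subseteq \mathbb{U}$ is essentially already recorded in the text preceding Lemma~\ref{iocg11}: each $u(d)$ lies in $\mathbb{Q}^{n-1}$, Theorem~\ref{iocg5} gives $Eu(d)\in \mathbb{Z}^{n-1}\subseteq \mathbb{Q}^{n-1}$, and $\mathbb{U}$ is a $\mathbb{Q}$-subspace of $\mathbb{Q}^{n-1}$ (if $u,u'\in \mathbb{U}$ and $a,b\in \mathbb{Q}$, then $au+bu'\in \mathbb{Q}^{n-1}$ and $E(au+bu')=aEu+bEu'\in \mathbb{Q}^{n-1}$). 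So it remains to prove $\mathbb{U}\subseteq$ span$_{\mathbb{Q}}\{u(d):d\mid \frac{n}{4}\}$.

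For this I would exploit invertibility of $E$ together with Lemma~\ref{iocg11}. By Lemma~\ref{iocg11}, $E(\mathbb{U})\subseteq$ span$_{\mathbb{Q}}\{u(d):d\mid \frac{n}{4}\}$, and by the inclusion of the previous paragraph span$_{\mathbb{Q}}\{u(d):d\mid \frac{n}{4}\}\subseteq \mathbb{U}$; hence $E(\mathbb{U})\subseteq \mathbb{U}$, i.e.\ $\mathbb{U}$ is invariant under the linear map $E$. Now view $E$ as a $\mathbb{Q}$-linear map on $\mathbb{Q}^{n-1}$ (even though its entries are complex, this is legitimate). Since $E$ is invertible over $\mathbb{C}$, this $\mathbb{Q}$-linear map is injective, and therefore so is its restriction $E|_{\mathbb{U}}\colon \mathbb{U}\to \mathbb{U}$, an injective $\mathbb{Q}$-linear endomorphism of the finite-dimensional $\mathbb{Q}$-vector space $\mathbb{U}$. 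An injective endomorphism of a finite-dimensional vector space is surjective, so $E(\mathbb{U})=\mathbb{U}$. Combining the chain $\mathbb{U}=E(\mathbb{U})\subseteq$ span$_{\mathbb{Q}}\{u(d):d\mid \frac{n}{4}\}\subseteq \mathbb{U}$ forces equality throughout, which is the claim.

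The substantive input here is Lemma~\ref{iocg11}, which is already established; the present argument is short, and the only points requiring care are: (a) checking that $\mathbb{U}$ really is a $\mathbb{Q}$-subspace, so that ``finite-dimensional'' and ``span'' are meaningful; (b) noting that injectivity of the complex matrix $E$ descends to the $\mathbb{Q}$-linear restriction; and (c) ordering the inclusions so that the invariance $E(\mathbb{U})\subseteq \mathbb{U}$ is justified before invoking ``injective implies surjective.'' I would expect (b) to be the most easily overlooked subtlety. A dimension-count alternative is available — the vectors $\{u(d):d\mid \frac{n}{4}\}$ have pairwise disjoint supports since $G_n(d)\cap G_n(d')=\emptyset$ for $d\neq d'$, hence are linearly independent, and one could try to show directly that $\dim_{\mathbb{Q}}\mathbb{U}$ equals the number of divisors of $\frac{n}{4}$ — but this requires more work than the invertibility argument, so I would not pursue it.
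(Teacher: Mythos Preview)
Your argument is correct and mirrors the paper's proof: both establish the chain $E(\mathbb{U})\subseteq \operatorname{span}_{\mathbb{Q}}\{u(d):d\mid\frac{n}{4}\}\subseteq \mathbb{U}$ and then invoke invertibility of $E$ (the paper via $\dim E(\mathbb{U})=\dim\mathbb{U}$, you via ``injective endomorphism of a finite-dimensional space is surjective'') to collapse the chain to equalities. The only quibble is the phrase ``view $E$ as a $\mathbb{Q}$-linear map on $\mathbb{Q}^{n-1}$'' --- $E$ does not carry $\mathbb{Q}^{n-1}$ into itself since its entries are not rational --- but your actual reasoning uses only the well-defined restriction $E|_{\mathbb{U}}\colon\mathbb{U}\to\mathbb{U}$, which you have already justified, so the argument stands.
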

	\begin{proof}
		By Lemma~\ref{iocg11}, $E(\mathbb{U}) \subseteq$ span$_{\mathbb{Q}}\{u(d): d\mid \frac{n}{4} \} \subseteq \mathbb{U}$. Since $E$ is invertible, we get $\dim E(\mathbb{U}) = \dim \mathbb{U}$, and hence $E(\mathbb{U}) =$ span$_{\mathbb{Q}}\{u(d): d\mid \frac{n}{4} \} = \mathbb{U}$.
	\end{proof}

	\begin{theorem}\label{iocg13}
		Let $G=\text{Circ}(\mathbb{Z}_n,\mathcal{C})$ be an oriented circulant graph.  
		\begin{enumerate}[label=(\roman*)]
			\item If $n\not\equiv 0 \Mod 4$ then $G$ is integral if and only if $\mathcal{C}=\emptyset$.
			\item If $n\equiv 0 \Mod 4$ then $G$ is integral if and only if $\mathcal{C}= \bigcup\limits_{d\in \mathscr{D}}S_n(d)$, where $\mathscr{D} \subseteq \{ d: d\mid \frac{n}{4}\}$ and $S_n(d) = G_n^{1}(d)$ or $S_n(d)=G_n^{3}(d)$.
		\end{enumerate}
	\end{theorem}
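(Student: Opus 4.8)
The plan is to encode the symbol set as a $\{-1,0,1\}$-valued vector, rephrase integrality as membership in $\mathbb{U}$, and then feed this into the structure theorem for $\mathbb{U}$ proved above.

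\emph{Setup.} Given a skew-symmetric $\mathcal{C}\subseteq\mathbb{Z}_n$, define the $(n-1)$-vector $u=u_{\mathcal{C}}$ by $u_k=1$ if $k\in\mathcal{C}$, $u_k=-1$ if $n-k\in\mathcal{C}$, and $u_k=0$ otherwise; skew-symmetry of $\mathcal{C}$ makes this well defined and forces $u_{n-k}=-u_k$. A one-line computation from Corollary~\ref{iocg02} gives $(Eu)_j=\mu_j$ for $1\le j\le n-1$, while $\mu_0=0$ always. Hence $G$ is integral if and only if $Eu\in\mathbb{Z}^{n-1}$; in particular integrality forces $u\in\mathbb{U}$. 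The converse direction of $(ii)$ is exactly Corollary~\ref{iocg6}, and the converse of $(i)$ is trivial ($\mathcal{C}=\emptyset$ makes every eigenvalue $0$), so only the forward directions require work.

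\emph{Part $(i)$.} Assume $n\not\equiv 0\Mod 4$ and $G$ integral, so $v:=Eu\in\mathbb{Q}^{n-1}$. I would show $v=0$ by repeating the argument in the proof of Lemma~\ref{iocg11}. Since $n$ carries at most one factor of $2$, every proper divisor $d$ of $n$ satisfies $n/d\not\equiv 0\Mod 4$, so $\Phi_{n/d}(x)$ is irreducible over $\mathbb{Q}(i)$ and is therefore the minimal polynomial over $\mathbb{Q}(i)$ of each primitive $(n/d)$-th root of unity. For $r\in G_n(d)$ the polynomial $\sum_{j=1}^{n-1} iu_j x^j - v_r\in\mathbb{Q}(i)[x]$ vanishes at $w_n^r$, hence at its conjugate root $w_n^{n-r}$, giving $v_r=v_{n-r}$; together with the antisymmetry $v_r=-v_{n-r}$ (proved exactly as in Lemma~\ref{iocg11}) this forces $v_r=0$. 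As the sets $G_n(d)$ with $d\mid n$, $d<n$, partition $\{1,\dots,n-1\}$, we get $v=0$, and invertibility of $E$ gives $u=0$, i.e.\ $\mathcal{C}=\emptyset$.

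\emph{Part $(ii)$, forward direction.} Assume $n\equiv 0\Mod 4$ and $G$ integral, so $u\in\mathbb{U}$. By Theorem~\ref{iocg12} we may write $u=\sum_{d\mid n/4}c_d\,u(d)$ with $c_d\in\mathbb{Q}$. The supports $G_n(d)$ of the vectors $u(d)$ are pairwise disjoint as $d$ ranges over divisors of $n/4$, so on $G_n(d)$ the vector $u$ coincides with $c_d\,u(d)$: it is constantly $c_d$ on $G_n^1(d)$ and constantly $-c_d$ on $G_n^3(d)$, and it vanishes on $G_n(d)$ for every $d\nmid n/4$, $d\mid n$. Since $4\mid n/d$, each of $G_n^1(d)$ and $G_n^3(d)$ has $\varphi(n/d)/2\ge 1$ elements (the counting used in Corollary~\ref{monic}), and since $u$ takes values in $\{-1,0,1\}$ this forces $c_d\in\{-1,0,1\}$. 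Reading off $\mathcal{C}=\{k:u_k=1\}$: a divisor $d$ with $c_d=1$ contributes exactly $G_n^1(d)$, one with $c_d=-1$ contributes exactly $G_n^3(d)$, and $c_d=0$ or $d\nmid n/4$ contributes nothing. Taking $\mathscr{D}=\{d\mid n/4:c_d\neq 0\}$ and $S_n(d)=G_n^1(d)$ or $G_n^3(d)$ accordingly yields $\mathcal{C}=\bigcup_{d\in\mathscr{D}}S_n(d)$, as claimed.

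The genuinely substantive input is Theorem~\ref{iocg12}, which is already in hand; everything else is bookkeeping. I expect the fiddliest point to be making the last paragraph airtight: confirming that the sets $G_n(d)$ ($d\mid n/4$) are disjoint with the $G_n^r(d)$ partitioning each of them, that $G_n^1(d)$ and $G_n^3(d)$ are nonempty, and that the vector reconstructed from $\sum c_d u(d)$ with $c_d\in\{-1,0,1\}$ is precisely $u_{\mathcal{C}}$ for a skew-symmetric set $\mathcal{C}$ of the advertised shape, with no extra constraints sneaking in.
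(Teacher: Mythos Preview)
Your proposal is correct and takes essentially the same approach as the paper. The only cosmetic difference is that for part~(i) the paper works directly with the eigenvalues $\mu_j$ rather than routing through the matrix $E$ and the vector $u$, but the substance---irreducibility of the relevant cyclotomic polynomial over $\mathbb{Q}(i)$ forces $\mu_j=\mu_{n-j}$, which combined with the antisymmetry $\mu_j=-\mu_{n-j}$ kills everything---is identical, and your part~(ii) matches the paper's argument line for line.
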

	\begin{proof}
		\begin{enumerate}[label=(\roman*)]
			\item Assume that $G$ is integral. Let $Sp_H(G)=\{\mu_0,\mu_1,...,\mu_{n-1} \}$. By Corollary~\ref{iocg02}, $\mu_j=-\mu_{n-j}\in \mathbb{Q}$ for all $j=0,...,n-1$, where 
			$$\mu_j= i \sum\limits_{k\in {\mathcal{C}}} w_n^{jk} -i \sum\limits_{k\in {\mathcal{C}}} w_n^{-jk} \textnormal{ for } 0\leq j\leq n-1.$$ 
Hence $w_n^j$ is a root of the polynomial  $p(x)= i \sum\limits_{k\in {\mathcal{C}}} x^{k} -i \sum\limits_{k\in {\mathcal{C}}} x^{-k}-\mu_j \in \mathbb{Q}(i)[x]$. Since $n\not\equiv 0 \Mod 4$, the polynomial $\Phi_n(x)$ is irreducible in $\mathbb{Q}(i)[x]$. Therefore $p(x)$ is a multiple of the irreducible polynomial $\Phi_n(x)$, and so $w_n^{-j}=w_n^{n-j}$ is also a root of $p(x)$, \emph{i.e.}, $\mu_j=\mu_{n-j}$. Now $\mu_j=\mu_{n-j}$ and $\mu_j=-\mu_{n-j}$ implies that $\mu_j=\mu_{n-j}=0$ for all $j=0,...,n-1$. Hence $\mathcal{C}=\emptyset$. Converse part is easy to prove. 
			
			\item We have already proved the sufficient part in Corollary~\ref{iocg6}. Assume that $G$ is an integral oriented circulant graph with non-empty symbol set $\mathcal{C}$. Let $u$ be the vector of length $n-1$ defined by
			$$u_k= \left\{ \begin{array}{rl}
				1 & \mbox{if }  k\in \mathcal{C} \\
				-1 & \mbox{if }  n-k\in \mathcal{C}\\ 
				0 &  \mbox{otherwise}. 
			\end{array}\right.
			$$ Note that each entry of $Eu$ is an eigenvalue of $G$, and so $Eu \in \mathbb{Z}^{n-1}$. 
			Therefore $u\in \mathbb{U}$, and by Theorem~\ref{iocg12}, $u\in $ span$_{\mathbb{Q}}\{u(d): d\mid \frac{n}{4} \}$. That is,
			$$u=\sum\limits_{ d\mid \frac{n}{4} } c_d u(d) , \textnormal{ where } c_d \in \mathbb{Q} \textnormal{ for } d \mid \frac{n}{4}.$$ 
			
			Note that the coordinates of $u$ and $u(d)$ belong to $\{0,\pm 1\}$. Since $G_n(d_1)\cap G_n(d_2)=\emptyset$ for all $d_1\neq d_2$, so $u_k=c_du(d)_k$ for all $k\in G_n(d)$. This implies that $c_d\in \{ 0,\pm 1\}$ for all $d\mid \frac{n}{4}$. Let $\mathscr{D}$ be the set of all $d$ for which $c_d\neq 0$ and $d\mid \frac{n}{4}$. Thus $\mathcal{C}= \bigcup\limits_{d\in \mathscr{D}}S_n(d)$, where $S_n(d) = G_n^{1}(d)$ or $S_n(d)=G_n^{3}(d)$.
		\end{enumerate}
	\end{proof}

	\begin{theorem}\label{iocg14}
		Let $G=\text{Circ}(\mathbb{Z}_n,\mathcal{C})$ be a mixed circulant graph on $n$ vertices with symbol set $\mathcal{C}$. Then $G$ is  integral if and only if  $\mathcal{C} \setminus \overline{\mathcal{C}} = \bigcup\limits_{d\in \mathscr{D}_1}G_n(d)$ and
		\begin{equation*}
			\overline{\mathcal{C}}= \left\{ \begin{array}{ll}
				\emptyset & \textnormal{if } n\not\equiv 0\Mod 4 \\ 
				\bigcup\limits_{d\in \mathscr{D}_2}S_n(d) & \textnormal{if } n\equiv 0\Mod 4,
			\end{array}\right.
		\end{equation*}
		where $\mathscr{D}_1 \subseteq \{ d: d\mid n\}$, $\mathscr{D}_2 \subseteq \{ d: d\mid \frac{n}{4}\}$, $\mathscr{D}_1 \cap \mathscr{D}_2 = \emptyset$, and $S_n(d) = G_n^{1}(d)$ or $S_n(d)=G_n^{3}(d)$.
	\end{theorem}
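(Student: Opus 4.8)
The plan is to reduce the statement to the already-established characterizations of integral \emph{simple} circulant graphs (Theorem~\ref{2006integral}) and integral \emph{oriented} circulant graphs (Theorem~\ref{iocg13}), glued together by the splitting lemma, Lemma~\ref{iocg24}. Recall that $\mathcal{C}\setminus\overline{\mathcal{C}}$ is symmetric and $\overline{\mathcal{C}}$ is skew-symmetric, so $\text{Circ}(\mathbb{Z}_n,\mathcal{C}\setminus\overline{\mathcal{C}})$ is a simple circulant graph, $\text{Circ}(\mathbb{Z}_n,\overline{\mathcal{C}})$ is an oriented circulant graph, and $\mathcal{C}$ is the disjoint union of $\mathcal{C}\setminus\overline{\mathcal{C}}$ and $\overline{\mathcal{C}}$.

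\textbf{Necessity.} Assume $G$ is integral. By Lemma~\ref{iocg24} both $\text{Circ}(\mathbb{Z}_n,\mathcal{C}\setminus\overline{\mathcal{C}})$ and $\text{Circ}(\mathbb{Z}_n,\overline{\mathcal{C}})$ are integral. Theorem~\ref{2006integral} applied to the first gives $\mathcal{C}\setminus\overline{\mathcal{C}}=\bigcup_{d\in\mathscr{D}_1}G_n(d)$ with $\mathscr{D}_1\subseteq\{d:d\mid n\}$, and Theorem~\ref{iocg13} applied to the second gives $\overline{\mathcal{C}}=\emptyset$ when $n\not\equiv 0\Mod 4$, respectively $\overline{\mathcal{C}}=\bigcup_{d\in\mathscr{D}_2}S_n(d)$ with $\mathscr{D}_2\subseteq\{d:d\mid n/4\}$ and each $S_n(d)\in\{G_n^1(d),G_n^3(d)\}$ when $n\equiv 0\Mod 4$. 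It remains to verify $\mathscr{D}_1\cap\mathscr{D}_2=\emptyset$ (only relevant when $n\equiv 0\Mod 4$): if $d$ were in both, then $G_n(d)\subseteq\mathcal{C}\setminus\overline{\mathcal{C}}$ and $S_n(d)\subseteq\overline{\mathcal{C}}$; but $d\mid n/4$ forces $n/d$ to be a multiple of $4$, so $G_{n/d}^1(1)$ and $G_{n/d}^3(1)$ are each nonempty, whence $\emptyset\neq S_n(d)\subseteq G_n(d)$, contradicting that $\mathcal{C}\setminus\overline{\mathcal{C}}$ and $\overline{\mathcal{C}}$ are disjoint.

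\textbf{Sufficiency.} Conversely, suppose $\mathcal{C}$ has the stated form. First one checks that the prescribed decomposition is really the symmetric/skew-symmetric splitting of $\mathcal{C}$: each $G_n(d)$ is invariant under $k\mapsto n-k$, so $\bigcup_{d\in\mathscr{D}_1}G_n(d)$ is symmetric; and each $G_n^r(d)$ is skew-symmetric, since $k=d\alpha$ with $\alpha\equiv r\Mod 4$ gives $n-k=d(n/d-\alpha)$ with $n/d-\alpha\equiv -r\Mod 4$ (because $4\mid n/d$), so $n-k$ lies in the other of $G_n^1(d),G_n^3(d)$. Using $\mathscr{D}_1\cap\mathscr{D}_2=\emptyset$ together with the fact that $\{G_n(d):d\mid n\}$ partitions $\{1,\dots,n-1\}$, it follows that $\mathcal{C}\setminus\overline{\mathcal{C}}=\bigcup_{d\in\mathscr{D}_1}G_n(d)$ and $\overline{\mathcal{C}}=\bigcup_{d\in\mathscr{D}_2}S_n(d)$. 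Now Theorem~\ref{2006integral} shows $\text{Circ}(\mathbb{Z}_n,\mathcal{C}\setminus\overline{\mathcal{C}})$ is integral, Corollary~\ref{iocg6} (the sufficiency half of Theorem~\ref{iocg13}) shows $\text{Circ}(\mathbb{Z}_n,\overline{\mathcal{C}})$ is integral, and Lemma~\ref{iocg24} then yields that $G$ is integral; this direction is exactly Corollary~\ref{iocg7}.

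\textbf{Main obstacle.} No step is deep: all the real content sits in the cited results, above all Theorem~\ref{iocg13}, which in turn relies on the factorization $\Phi_n(x)=\Phi_n^1(x)\Phi_n^3(x)$ of Section~4. The only genuinely new bookkeeping is the disjointness $\mathscr{D}_1\cap\mathscr{D}_2=\emptyset$ and, in the converse direction, confirming that assembling $\mathcal{C}$ from a ``simple'' family $\{G_n(d):d\in\mathscr{D}_1\}$ and an ``oriented'' family $\{S_n(d):d\in\mathscr{D}_2\}$ with disjoint index sets reproduces $\overline{\mathcal{C}}$ exactly as the skew-symmetric part --- that is, that no overlap between the two families can occur.
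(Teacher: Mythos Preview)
Your proof is correct and follows essentially the same route as the paper: invoke Lemma~\ref{iocg24} to split into the simple part $\mathcal{C}\setminus\overline{\mathcal{C}}$ and the oriented part $\overline{\mathcal{C}}$, then apply Theorem~\ref{2006integral} and Theorem~\ref{iocg13} respectively. The paper's proof is a terse two-line citation of these three results; your additional bookkeeping (the disjointness $\mathscr{D}_1\cap\mathscr{D}_2=\emptyset$ and the verification that the prescribed union really recovers the symmetric/skew-symmetric decomposition) just makes explicit what the paper leaves to the reader.
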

	\begin{proof} By Theorem~\ref{iocg24}, $G$ is integral if and only if both $(\mathbb{Z}_n,\mathcal{C}\setminus \overline{\mathcal{C}})$ and $(\mathbb{Z}_n, \overline{\mathcal{C}})$ are integral. Therefore the result follows from Theorem~\ref{2006integral} and Theorem~\ref{iocg13}.
	\end{proof}
	
	Wasin So \cite{2006integral} gave an upper bound on the number of integral circulant graphs on $n$ vertices.
	
	\begin{corollary}\label{iocgws}\cite{2006integral}
		Let $\tau(n)$ be the number of divisors of $n$. Then there are at most $2^{\tau(n)-1}$ integral circulant graphs on $n$ vertices.
	\end{corollary}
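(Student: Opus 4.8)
The plan is to obtain the bound directly from Wasin So's characterization (Theorem~\ref{2006integral}). That theorem says the circulant graph $\text{Circ}(\mathbb{Z}_n,\mathcal{C})$ is integral if and only if $\mathcal{C}=\bigcup_{d\in\mathscr{D}}G_n(d)$ for some $\mathscr{D}\subseteq\{d:d\mid n\}$. Hence the number of integral circulant graphs on the vertex set $\mathbb{Z}_n$ is at most the number of distinct symbol sets of this shape, which in turn is at most the number of subsets of the divisor set of $n$, namely $2^{\tau(n)}$. To improve this to $2^{\tau(n)-1}$, I would use that one of the blocks $G_n(d)$ is always empty.

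Concretely, recall the observation recorded just before Lemma~\ref{icg1} that $G_n(n)=\emptyset$, since no $k$ with $1\le k\le n-1$ satisfies $\gcd(k,n)=n$. Therefore, for every $\mathscr{D}$, the two sets $\mathscr{D}$ and $\mathscr{D}\cup\{n\}$ give rise to exactly the same symbol set $\mathcal{C}$, and hence to the same graph, so the assignment $\mathscr{D}\mapsto\text{Circ}(\mathbb{Z}_n,\bigcup_{d\in\mathscr{D}}G_n(d))$ factors through the $2^{\tau(n)-1}$ subsets of the \emph{proper} divisors of $n$. Every integral circulant graph on $n$ vertices lies in the image of this restricted map, which gives the claimed bound of $2^{\tau(n)-1}$.

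There is no real obstacle here; the only subtle point is pinning down what is being counted. Reading ``number of integral circulant graphs'' as the number of such graphs on the fixed labelled vertex set $\mathbb{Z}_n$ (so that a graph is determined by its symbol set) makes the argument above clean, and in fact the bound is then sharp: since the nonempty sets $G_n(d)$ with $d\mid n$ and $d<n$ are pairwise disjoint with union $\{1,\dots,n-1\}$, distinct subsets of the proper divisors yield distinct symbol sets. If one instead counts up to isomorphism, the same reasoning still delivers ``at most $2^{\tau(n)-1}$''.
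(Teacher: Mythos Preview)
Your argument is correct. Note, however, that the paper does not supply its own proof of this corollary: it is quoted directly from \cite{2006integral} without proof, so there is nothing in the present paper to compare against. Your derivation---apply Theorem~\ref{2006integral}, then observe $G_n(n)=\emptyset$ so that only the $\tau(n)-1$ proper divisors matter---is exactly the standard justification and is the one So gives.
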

	
	In the next result, we give a similar upper bound on the number of integral mixed circulant graphs on $n$ vertices.
	
	\begin{corollary}
		Let $\tau(n)$ be the number of divisors of $n$. Then there are at most $k(n)$ integral mixed circulant graphs on $n$ vertices, where
		\begin{equation*}
			k(n)= \left\{ \begin{array}{ll}
				2^{\tau(\frac{n}{4})+\tau(n)-1} & \textnormal{if } n\equiv 0\Mod 4 \\ 
				2^{\tau(n)-1} & \textnormal{if } n \not \equiv 0\Mod 4.
			\end{array}\right.
		\end{equation*}
	\end{corollary}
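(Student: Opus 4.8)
The plan is to obtain the count directly from the characterization in Theorem~\ref{iocg14}, in the same spirit as Corollary~\ref{iocgws}. That theorem identifies the integral mixed circulant graphs on $n$ vertices (viewed through their symbol sets) with exactly those $\mathcal{C}$ for which $\mathcal{C}\setminus\overline{\mathcal{C}}=\bigcup_{d\in\mathscr{D}_1}G_n(d)$ with $\mathscr{D}_1\subseteq\{d:d\mid n\}$, and $\overline{\mathcal{C}}$ is either $\emptyset$ (if $n\not\equiv0\Mod 4$) or $\bigcup_{d\in\mathscr{D}_2}S_n(d)$ with $\mathscr{D}_2\subseteq\{d:d\mid \tfrac{n}{4}\}$, $S_n(d)\in\{G_n^1(d),G_n^3(d)\}$, and $\mathscr{D}_1\cap\mathscr{D}_2=\emptyset$. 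Since the sets $G_n(d)$ for $d\mid n$, $d<n$, partition $\{1,\dots,n-1\}$ and each $G_n^1(d),G_n^3(d)$ lies inside $G_n(d)$, distinct admissible data $(\mathscr{D}_1,\overline{\mathcal{C}})$ produce distinct symbol sets; hence it suffices to bound the number of admissible data by $k(n)$. Keeping in mind that $G_n(n)=\emptyset$, a divisor $d=n$ placed in $\mathscr{D}_1$ contributes nothing and may be discarded: this is the source of the ``$-1$'' in the exponent and of the word ``at most''.

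For $n\not\equiv0\Mod 4$ the skew part is forced to be empty, so an admissible symbol set is determined solely by the choice of $\mathscr{D}_1\subseteq\{d:d\mid n,\ d<n\}$. There are $\tau(n)-1$ such divisors, each independently present or absent in $\mathscr{D}_1$, giving at most $2^{\tau(n)-1}=k(n)$ integral mixed circulant graphs.

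For $n\equiv0\Mod 4$ I would count divisor by divisor. Partition the divisors of $n$ strictly smaller than $n$ into the $\tau(n/4)$ divisors of $n/4$ and the remaining $\tau(n)-1-\tau(n/4)$ divisors. A divisor $d<n$ with $d\nmid\tfrac{n}{4}$ cannot belong to $\mathscr{D}_2$, so it carries exactly the binary decision ``put $G_n(d)$ in the symmetric part of $\mathcal{C}$, or not'': $2$ options. A divisor $d\mid\tfrac{n}{4}$ may instead be used in four mutually exclusive ways — place it in $\mathscr{D}_1$ (contributing $G_n(d)$), or in $\mathscr{D}_2$ with $S_n(d)=G_n^1(d)$, or in $\mathscr{D}_2$ with $S_n(d)=G_n^3(d)$, or use it in neither — the exclusivity of the first option against the next two being exactly the constraint $\mathscr{D}_1\cap\mathscr{D}_2=\emptyset$. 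Multiplying over all divisors below $n$,
\[
4^{\tau(n/4)}\cdot 2^{\,\tau(n)-1-\tau(n/4)}=2^{\,2\tau(n/4)+\tau(n)-1-\tau(n/4)}=2^{\,\tau(n/4)+\tau(n)-1}=k(n).
\]

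The argument is pure bookkeeping once Theorem~\ref{iocg14} is available, so I do not expect a genuine obstacle. The one point that needs care is the coupling of the symmetric and skew roles of a divisor $d\mid\tfrac{n}{4}$ through $\mathscr{D}_1\cap\mathscr{D}_2=\emptyset$: treating these roles as independent would overcount, whereas recognizing them as a single quaternary choice is precisely what yields the exponent $\tau(n/4)+\tau(n)-1$. The other thing to be mindful of is not to double-count the redundant divisor $d=n$, which is why one works with the partition of the proper divisors of $n$ from the outset.
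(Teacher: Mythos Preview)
Your proof is correct and follows essentially the same approach as the paper: both arguments count, divisor by divisor, the admissible choices coming from Theorem~\ref{iocg14}, arriving at the same product $4^{\tau(n/4)}\cdot 2^{\tau(n)-1-\tau(n/4)}$ when $n\equiv 0\Mod 4$ and at $2^{\tau(n)-1}$ otherwise. Your write-up is in fact a bit more explicit about why the four options at a divisor of $n/4$ are mutually exclusive (via $\mathscr{D}_1\cap\mathscr{D}_2=\emptyset$) and about the role of the redundant divisor $d=n$, but the substance is the same.
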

	\begin{proof}
		If $n \not \equiv 0\Mod 4$ then Theorem \ref{iocg14} says that every integral mixed circulant graph is undirected. By Corollary \ref{iocgws}, there are at most $2^{\tau(n)-1}$ integral mixed circulant graphs on $n$ vertices. Assume that $n\equiv 0\Mod 4$ and $\text{Circ}(\mathbb{Z}_n,\mathcal{C})$ is a mixed circulant graph on $n$ vertices with symbol set $\mathcal{C}$. For every divisor $d$ of $\frac{n}{4}$, by Theorem \ref{iocg14}, $G_n(d)$, $G_n^1(d)$, $G_n^3(d)$ and $\emptyset$ are subsets of the symbol set $\mathcal{C}$. Thus, we have $4^{\tau{(\frac{n}{4})}}$ distinct symbol sets for an integral mixed circulant graph corresponding to each divisor of $\frac{n}{4}$. For each proper divisor $d$ of $n$ which is not a divisor of $\frac{n}{4}$, by Theorem \ref{iocg14}, $G_n(d)$ and $\emptyset$ are  subsets of the symbol set $\mathcal{C}$. Thus, we have $2^{\tau(n)-\tau{(\frac{n}{4})-1}}$ distinct symbol sets for an integral mixed circulant graph corresponding to proper divisors of $n$ which are not divisors of $\frac{n}{4}$.
		Hence we have $4^{\tau{(\frac{n}{4})}}\cdot 2^{\tau(n)-\tau{(\frac{n}{4})-1}}=2^{\tau(\frac{n}{4})+\tau(n)-1} $ distinct symbol sets for an integral mixed circulant graph. Since distinct symbol sets may correspond to isomorphic circulant graphs, so  there are at most $2^{\tau(\frac{n}{4})+\tau(n)-1} $  integral mixed circulant graphs on $n$ vertices.
	\end{proof}
	

	\section{ Ramanujan sum and eigenvalues of oriented circulant graphs} 
	
	In 1918, Ramanujan \cite{ramanujan1918certain} published a seminal paper in which he introduced a sum (now called Ramanujan sum), for each $n,q \in \mathbb{N}$, defined by
	\begin{equation}\label{ramasum}
		\begin{split}
			c_n(q)= \sum_{a\in G_n(1)} \cos \bigg(\frac{2\pi a q}{n}\bigg).
		\end{split} 
	\end{equation}
	This sum can also be written as  $$c_n(q)= \sum_{a\in G_n(1)} w_n^{aq}=\sum_{\substack{a\in G_n(1) \\ a < \frac{n}{2}}} 2\cos \bigg(\frac{2\pi a q}{n}\bigg),$$ where $w_n= \exp ( \frac{2\pi i}{n})$. A set $\mathcal{C} \subseteq G_n(1)$ is said to be \textit{skew-symmetric} in $G_n(1)$ if $n-a \not\in \mathcal{C} $ for all $a\in \mathcal{C}$ and $\mathcal{C} \cup \mathcal{C}^{-1}= G_n(1)$. It is clear that a skew-symmetric set in $G_n(1)$ contains exactly $\frac{\varphi(n)}{2}$ elements.
	
	It is easy to see, for any skew-symmetric set $\mathcal{C}$ in $G_n(1)$, that the Ramanujan sum (\ref{ramasum}) can be re-written as 
	\begin{equation}\label{ramasum2}
		\begin{split}
			c_n(q)= \sum_{a\in \mathcal{C}} 2 \cos \bigg(\frac{2\pi a q}{n}\bigg).
		\end{split} 
	\end{equation}
	
\begin{lema}\cite{murty2008problems}\label{murty2008problems} Let $m \in \mathbb{N}$. Then $$c_m(t)=  \frac{\mu({\frac{m}{\delta_t}}).\varphi(m)}{\varphi(\frac{m}{\delta_t})},$$ where $\delta_t = \gcd(m,t)$.
\end{lema}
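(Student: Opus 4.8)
The plan is to show that both sides, viewed as functions of $m$ with $t$ fixed, are multiplicative, and then to verify the formula on prime powers. Put $\delta=\delta_t=\gcd(m,t)$ and $R(m)=\mu(m/\delta)\,\varphi(m)/\varphi(m/\delta)$; the case $m=1$ is disposed of by the usual convention $c_1(t)=1$ (and $R(1)=1$), so assume $m\ge 2$.

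\textbf{Multiplicativity.} First I would prove that $\gcd(m_1,m_2)=1$ implies $c_{m_1m_2}(t)=c_{m_1}(t)\,c_{m_2}(t)$. Choosing $b_1,b_2\in\mathbb{Z}$ with $b_1m_2+b_2m_1=1$ gives $\frac{a}{m_1m_2}=\frac{ab_1}{m_1}+\frac{ab_2}{m_2}$, hence $w_{m_1m_2}^{\,at}=w_{m_1}^{\,ab_1t}\,w_{m_2}^{\,ab_2t}$; since $b_1m_2\equiv 1\Mod{m_1}$ (so $b_1$ is a unit mod $m_1$), and symmetrically mod $m_2$, the Chinese Remainder Theorem shows that as $a$ runs through a reduced residue system mod $m_1m_2$ the pair $(ab_1\bmod m_1,\,ab_2\bmod m_2)$ runs bijectively through the product of reduced residue systems mod $m_1$ and mod $m_2$; splitting the sum over this product gives the claim. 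The right side is multiplicative too: $\gcd(m_1m_2,t)=\gcd(m_1,t)\gcd(m_2,t)$ with coprime factors, so $m_1m_2/\gcd(m_1m_2,t)$ splits accordingly, and since $\mu$ and $\varphi$ are multiplicative, $R(m_1m_2)=R(m_1)R(m_2)$.

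\textbf{Prime powers.} It then remains to check $c_{p^k}(t)=R(p^k)$ for $k\ge 1$. Deleting the multiples of $p$ from $\{1,\dots,p^k\}$ gives
\[
c_{p^k}(t)=\sum_{j=0}^{p^k-1}w_{p^k}^{\,jt}-\sum_{j=0}^{p^{k-1}-1}w_{p^{k-1}}^{\,jt},
\]
and $\sum_{j=0}^{N-1}w_N^{\,jt}$ equals $N$ if $N\mid t$ and $0$ otherwise; hence $c_{p^k}(t)$ equals $\varphi(p^k)$ if $v_p(t)\ge k$, equals $-p^{k-1}$ if $v_p(t)=k-1$, and equals $0$ if $v_p(t)\le k-2$. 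On the other hand $\gcd(p^k,t)=p^{\min(k,v_p(t))}$, so $p^k/\delta$ is $1$, $p$, or a multiple of $p^2$ in these three cases, and $R(p^k)$ equals $\varphi(p^k)$, $-p^{k-1}$, $0$ accordingly. Thus $c_{p^k}(t)=R(p^k)$ always, and multiplicativity extends this to $c_m(t)=R(m)$ for all $m$.

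The only mildly delicate point is the Chinese Remainder bookkeeping; everything else is routine. One could also bypass multiplicativity: regrouping $\sum_{j=0}^{m-1}w_m^{\,jt}$ by the value of $\gcd(j,m)$ gives $\sum_{d\mid m}c_d(t)=m$ or $0$ according as $m\mid t$ or not, whence Möbius inversion yields $c_m(t)=\sum_{d\mid\delta}d\,\mu(m/d)$; putting $d=\delta/e$ and using that $\mu((m/\delta)e)=0$ unless $\gcd(m/\delta,e)=1$, this collapses to $\mu(m/\delta)\,\delta\prod_{p\mid\delta,\,p\nmid m/\delta}(1-\tfrac1p)$, which one identifies with $R(m)$ by comparing $p$-adic valuations (a prime $p\mid\delta$ satisfies $p\nmid m/\delta$ precisely when $v_p(\delta)=v_p(m)$).
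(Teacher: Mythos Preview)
Your proof is correct. The paper does not supply its own proof of this lemma; it simply cites the result from Murty's \emph{Problems in Analytic Number Theory}, so there is no in-paper argument to compare against. The multiplicativity-plus-prime-power verification you give is one of the two standard textbook proofs of this identity, and the M\"obius-inversion alternative you sketch at the end is the other; both are complete as written.
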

	
	One of the important properties of Ramanujan sum is that $c_n(q)$ is an integer for all $q,n \in \mathbb{N}$. So, it is a natural question to ask, ``if we replace cosine with sine in Ramanujan sum, then is the sum remain an integer for all $q,n \in \mathbb{N}$?''.
	Let us replace the cosine by sine in (\ref{ramasum2}), and write
	
	\begin{equation}\label{newsum}
		\begin{split}
			s_n^{\mathcal{C}}(q)= \sum_{a\in \mathcal{C}} 2 \sin \bigg(\frac{2\pi a q}{n}\bigg),
		\end{split} 
	\end{equation}
	where $\mathcal{C}$ is any skew-symmetric set in $G_n(1)$. The sum (\ref{newsum}) can be re-written as 
	\begin{equation}\label{newsum2}
		\begin{split}
			s_n^{\mathcal{C}}(q)= \sum_{a\in \mathcal{C}} \frac{w_n^{aq}-w_n^{-aq}}{i}.
		\end{split} 
	\end{equation}
	
	\begin{theorem}\label{sumchara} The following are true.
		\begin{enumerate}[label=(\roman*)]
			\item For $n \not\equiv 0\Mod 4$, there does not exist skew-symmetric set $\mathcal{C}$ in $G_n(1)$ such that $s_n^{\mathcal{C}}(q) \in \mathbb{Z}$  for all $q,n \in \mathbb{N}$.
			\item For $n \equiv 0\Mod 4$, $s_n^{\mathcal{C}}(q) \in \mathbb{Z}$  for all $q,n \in \mathbb{N}$ if and only if $\mathcal{C}=G_n^1(1)$ or $\mathcal{C}=G_n^3(1)$.
		\end{enumerate}
	\end{theorem}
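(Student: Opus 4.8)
The plan is to identify $s_n^{\mathcal{C}}(q)$ with the negative of an eigenvalue of the oriented circulant graph $\text{Circ}(\mathbb{Z}_n,\mathcal{C})$ and then to reduce both parts to the characterization in Theorem~\ref{iocg13}, while accounting for the fact that $\mathcal{C}$ is required to be skew-symmetric \emph{in} $G_n(1)$ rather than merely skew-symmetric. First I would note that a skew-symmetric set $\mathcal{C}$ in $G_n(1)$ satisfies $n-a\notin\mathcal{C}$ for every $a\in\mathcal{C}$, hence is skew-symmetric as a subset of $\mathbb{Z}_n$, so $\text{Circ}(\mathbb{Z}_n,\mathcal{C})$ is an oriented circulant graph; comparing~(\ref{newsum2}) with Corollary~\ref{iocg02} yields $\mu_j = i\sum_{k\in\mathcal{C}}(w_n^{jk}-w_n^{-jk}) = -\,s_n^{\mathcal{C}}(j)$ for $0\le j\le n-1$. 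Since $s_n^{\mathcal{C}}(q)$ has period $n$ in $q$, the hypothesis that $s_n^{\mathcal{C}}(q)\in\mathbb{Z}$ for all $q$ is then equivalent to $\text{Circ}(\mathbb{Z}_n,\mathcal{C})$ being integral.

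For part (i) I would argue by contradiction: if $n\not\equiv 0\Mod 4$ and such a $\mathcal{C}$ existed, then $\text{Circ}(\mathbb{Z}_n,\mathcal{C})$ would be an integral oriented circulant graph, forcing $\mathcal{C}=\emptyset$ by Theorem~\ref{iocg13}(i); but skew-symmetry in $G_n(1)$ demands $\mathcal{C}\cup\mathcal{C}^{-1}=G_n(1)\neq\emptyset$. For the forward direction of part (ii), with $n\equiv 0\Mod 4$, Theorem~\ref{iocg13}(ii) would give $\mathcal{C}=\bigcup_{d\in\mathscr{D}}S_n(d)$ with $\mathscr{D}\subseteq\{d:d\mid \tfrac n4\}$ and each $S_n(d)\in\{G_n^1(d),G_n^3(d)\}$. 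Here I would observe that every element of $\mathcal{C}\subseteq G_n(1)$ is coprime to $n$, whereas for any $d>1$ dividing $\tfrac n4$ the set $G_n^r(d)=d\,G_{n/d}^r(1)$ is nonempty (because $n/d\ge 4$ forces $\varphi(n/d)\ge 2$ and hence $|G_{n/d}^r(1)|=\varphi(n/d)/2\ge 1$) and lies entirely inside $G_n(d)$, which is disjoint from $G_n(1)$. This rules out $d>1$, so $\mathscr{D}\subseteq\{1\}$, and $\mathcal{C}\cup\mathcal{C}^{-1}=G_n(1)\neq\emptyset$ forces $\mathscr{D}\neq\emptyset$; thus $\mathscr{D}=\{1\}$ and $\mathcal{C}=G_n^1(1)$ or $G_n^3(1)$.

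For the converse of part (ii) I would first verify that $G_n^1(1)$ and $G_n^3(1)$ genuinely are skew-symmetric sets in $G_n(1)$: by Lemma~\ref{iocg4} they are disjoint with union $G_n(1)$, and since $4\mid n$ the involution $a\mapsto n-a$ of $G_n(1)$ interchanges $G_n^1(1)$ and $G_n^3(1)$, so $(G_n^r(1))^{-1}$ is the other one, giving $G_n^r(1)\cap (G_n^r(1))^{-1}=\emptyset$ and $G_n^r(1)\cup (G_n^r(1))^{-1}=G_n(1)$. Then Corollary~\ref{iocg6} with $\mathscr{D}=\{1\}$ shows $\text{Circ}(\mathbb{Z}_n,G_n^r(1))$ is integral, whence $s_n^{G_n^r(1)}(q)=-\mu_q\in\mathbb{Z}$ for every $q$. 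The one delicate point I anticipate is the step in part (ii) that removes divisors $d>1$ from $\mathscr{D}$: it hinges on $G_n^r(d)$ being nonempty for every $d\mid\tfrac n4$, which is exactly where one needs $|G_{n/d}^1(1)|=|G_{n/d}^3(1)|=\varphi(n/d)/2$ together with $n/d\ge 4$.
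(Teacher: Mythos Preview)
Your proposal is correct and follows essentially the same route as the paper: the paper's entire proof is the single sentence ``The proof of Theorem~\ref{sumchara} follows from Theorem~\ref{iocg13},'' and you have simply (and correctly) supplied the details of that reduction, including the identification $\mu_j=-s_n^{\mathcal{C}}(j)$, the elimination of divisors $d>1$ via $\mathcal{C}\subseteq G_n(1)$, and the verification that $G_n^1(1),G_n^3(1)$ are skew-symmetric in $G_n(1)$.
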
 
	The proof of Theorem \ref{sumchara} follows from Theorem \ref{iocg13}. In what follows, we consider $\mathcal{C}=G_n^1(1)$ for the sums in (\ref{newsum}) and (\ref{newsum2}), and we simply write $s_n^{\mathcal{C}}(q)$ by $s_n(q)$. That is,
	
	\begin{equation*}\label{newsum3}
		\begin{split}
			s_n(q) = \sum_{a\in G_n^1(1)} \frac{w_n^{aq}-w_n^{-aq}}{i}.
		\end{split} 
	\end{equation*}
	
	In Theorem \ref{sumchara}, we found that $s_n(q)$ is an integer for all $n,q\in \mathbb{N}$ and $n \equiv 0\Mod 4$.
	
	\begin{theorem}\cite{2006integral}
		Let $G=\text{Circ}(\mathbb{Z}_n,\mathcal{C})$ be an integral circulant graph on $n$ vertices with $Sp_H(G)=\{ \lambda_0,..., \lambda_{n-1} \}$. Let $\mathcal{C}=G_n(d)$, where $ d\mid n$. Then 
		$$\lambda_j= c_{n/d}(j) \textnormal{ for } 0\leq j \leq n-1.$$
		Further, $Sp_H(G)$ is equal to $d$ copies of $\{c_{n/d}(0),c_{n/d}(1),...,c_{n/d}((n/d)-1)\}$. 
	\end{theorem}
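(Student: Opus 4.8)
The plan is to reduce the eigenvalue formula of Corollary~\ref{iocg01} directly to the Ramanujan sum by a single change of summation index. Since $\mathcal{C}=G_n(d)$ is closed under inverse, $\text{Circ}(\mathbb{Z}_n,\mathcal{C})$ is a (simple) circulant graph and Corollary~\ref{iocg01} applies, giving $\lambda_j=\sum_{k\in G_n(d)}w_n^{jk}$ for $0\le j\le n-1$. First I would invoke the identity $G_n(d)=dG_{n/d}(1)$ (recorded in the paragraph preceding Lemma~\ref{icg1}) to rewrite this as $\lambda_j=\sum_{a\in G_{n/d}(1)}w_n^{jda}$. Then, using $w_n^{d}=\exp(2\pi i d/n)=\exp\big(2\pi i/(n/d)\big)=w_{n/d}$, the sum collapses to $\lambda_j=\sum_{a\in G_{n/d}(1)}w_{n/d}^{ja}$, which is exactly $c_{n/d}(j)$ by the alternative expression $c_m(q)=\sum_{a\in G_m(1)}w_m^{aq}$ written just after~(\ref{ramasum}). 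This proves $\lambda_j=c_{n/d}(j)$ for all $0\le j\le n-1$.

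For the multiplicity claim I would note that $q\mapsto c_m(q)$ depends only on $q\bmod m$, since $w_m^{a(q+m)}=w_m^{aq}$ for every $a$. Applying this with $m=n/d$, the value $c_{n/d}(j)$ depends only on $j\bmod(n/d)$. As $j$ runs over $\{0,1,\dots,n-1\}$ and $n=d\cdot(n/d)$, each residue class modulo $n/d$ occurs exactly $d$ times; hence the multiset $Sp_H(G)=\{\lambda_0,\dots,\lambda_{n-1}\}=\{c_{n/d}(j):0\le j\le n-1\}$ is $d$ copies of $\{c_{n/d}(0),c_{n/d}(1),\dots,c_{n/d}((n/d)-1)\}$, as asserted.

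I do not anticipate a genuine obstacle: the argument is essentially bookkeeping. The only points needing a little care are the index substitution $k=da$ combined with the root-of-unity collapse $w_n^{d}=w_{n/d}$, and a sanity check at the degenerate endpoint, $\lambda_0=|G_n(d)|=\varphi(n/d)=c_{n/d}(0)$. I would also remark that the integrality hypothesis on $G$ is automatic once $\mathcal{C}=G_n(d)$ (Theorem~\ref{2006integral} with $\mathscr{D}=\{d\}$), and is consistent here because Ramanujan sums are integer-valued.
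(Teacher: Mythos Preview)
Your proposal is correct. The paper does not supply its own proof of this theorem (it is cited from \cite{2006integral}), but your argument mirrors precisely the method the paper uses to prove the analogous oriented result, Theorem~\ref{OriEigInt}: invoke the eigenvalue formula from Corollary~\ref{iocg01}, apply $G_n(d)=dG_{n/d}(1)$, collapse via $w_n^d=w_{n/d}$, and then use periodicity modulo $n/d$ for the multiplicity claim.
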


	\begin{theorem}\label{OriEigInt}
		Let $G=\text{Circ}(\mathbb{Z}_n,\mathcal{C})$ be an integral oriented circulant graph on $n\equiv 0 \Mod 4$ vertices with $Sp_H(G)=\{ \mu_0,..., \mu_{n-1} \}$. Let $\mathcal{C}=G_n^1(d)$, where $ d\mid \frac{n}{4}$. Then 
		$$\mu_j= -s_{n/d}(j) \textnormal{ for } 0\leq j \leq n-1.$$
		Further, $Sp_H(G)$ is equal to $d$ copies of $\{-s_{n/d}(0),-s_{n/d}(1),...,-s_{n/d}((n/d)-1)\}$. 
	\end{theorem}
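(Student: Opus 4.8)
The plan is to evaluate the eigenvalues supplied by Corollary~\ref{iocg02} directly and then recognise the result as $-s_{n/d}(j)$; the argument parallels the Ramanujan-sum computation in the theorem immediately preceding this one. By Corollary~\ref{iocg02}, with $\mathcal{C}=G_n^1(d)$ the eigenvalues of $G$ are
$$\mu_j = i\sum_{k\in G_n^1(d)}\big(w_n^{jk}-w_n^{-jk}\big),\qquad 0\le j\le n-1.$$
First I would reindex this sum. Using the identity $G_n^1(d)=dG_{n/d}^1(1)$ recorded just after Lemma~\ref{iocg4}, I substitute $k=da$ with $a$ ranging over $G_{n/d}^1(1)$; since $w_n^{d}=\exp\!\big(\tfrac{2\pi i}{n/d}\big)=w_{n/d}$, this turns the sum into
$$\mu_j = i\sum_{a\in G_{n/d}^1(1)}\big(w_{n/d}^{ja}-w_{n/d}^{-ja}\big).$$

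Next I would compare with the definition $s_{m}(q)=\sum_{a\in G_m^1(1)}\frac{w_m^{aq}-w_m^{-aq}}{i}$. Taking $m=n/d$ and $q=j$ gives $\sum_{a\in G_{n/d}^1(1)}(w_{n/d}^{ja}-w_{n/d}^{-ja})=i\,s_{n/d}(j)$, hence $\mu_j=i\cdot i\,s_{n/d}(j)=-s_{n/d}(j)$, as claimed. These are genuine integers: $d\mid\tfrac n4$ forces $n/d\equiv 0\Mod 4$, so Theorem~\ref{sumchara}(ii) applies; alternatively, integrality of $G$ is part of the hypothesis. For the multiplicity statement I would note that $s_{n/d}(j)$ depends only on $j$ modulo $n/d$, because each $w_{n/d}^{aj}$ does, i.e. $s_{n/d}(j)=s_{n/d}(j+n/d)$. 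Since $d\cdot(n/d)=n$, as $j$ runs through $0,1,\dots,n-1$ each residue class modulo $n/d$ occurs exactly $d$ times, so $Sp_H(G)$ is $d$ copies of $\{-s_{n/d}(0),\dots,-s_{n/d}((n/d)-1)\}$.

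I do not anticipate a serious obstacle; the entire content is the reindexing $k=da$ together with $G_n^1(d)=dG_{n/d}^1(1)$ and $w_n^d=w_{n/d}$. The only point needing a little care is keeping the two moduli $n$ and $n/d$ straight throughout, and phrasing the periodicity argument for the multiplicity in terms of residues modulo $n/d$ rather than modulo $n$.
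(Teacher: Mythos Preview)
Your proposal is correct and follows essentially the same route as the paper: apply Corollary~\ref{iocg02}, reindex via $G_n^1(d)=dG_{n/d}^1(1)$ and $w_n^{d}=w_{n/d}$ to recognise the sum as $-s_{n/d}(j)$, then use the periodicity $s_{n/d}(t_1)=s_{n/d}(t_2)$ for $t_1\equiv t_2\Mod{n/d}$ to obtain the $d$-fold repetition. If anything, your handling of the subscripts $n$ versus $n/d$ is cleaner than the paper's printed version.
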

	\begin{proof}
		Using Corollary~\ref{iocg02}, we get
		\begin{equation*}\label{}
			\begin{split}
				\mu_j = i \sum\limits_{k\in {\mathcal{C}}} (w_n^{jk} - w_n^{-jk}) = - \sum_{a\in G_n^1(d)} \frac{w_n^{aj}-w_n^{-aj}}{i} &= - \sum_{a\in dG_n^1(1)} \frac{w_n^{aj}-w_n^{-aj}}{i}\\
				&= - \sum_{a\in G_n^1(1)} \frac{(w_n^d)^{aj}-(w_n^d)^{-aj}}{i}\\
				&= -s_{n/d}(j).
			\end{split} 
		\end{equation*}
		Since $s_{n/d}(t_1)=s_{n/d}(t_2)$ for $t_1 \equiv t_2 (\mathrm{mod}\hspace{0.1cm} \frac{n}{d})$, the spectrum $\{ \mu_1,...,\mu_{n-1} \}$ is equal to $d$ copies of \linebreak[4] $\{-s_{n/d}(0),-s_{n/d}(1),...,-s_{n/d}((n/d)-1)\}$.  
	\end{proof}
	
	\begin{corollary} 
		Let $G=\text{Circ}(\mathbb{Z}_n,\mathcal{C})$ be an integral oriented circulant graph on $n\equiv 0 \Mod 4$ vertices  with $Sp_H(G)=\{ \mu_0,..., \mu_{n-1} \}$. Then $$\mu_j= \sum_{d\in \mathscr{D}} \pm s_{n/d}(j) \textnormal{ for } 0\leq j \leq n-1, \textnormal{ where } \mathscr{D} \subseteq \left\{ d: d\mid \frac{n}{4}\right\}.$$ 
	\end{corollary}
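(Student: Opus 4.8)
The plan is to bootstrap from two results already in hand: the structural description of integral oriented circulant graphs in Theorem~\ref{iocg13}(ii), and the single-class eigenvalue formula of Theorem~\ref{OriEigInt}. Since $G$ is integral and $n\equiv 0\Mod 4$, Theorem~\ref{iocg13}(ii) lets us write $\mathcal{C}=\bigcup_{d\in\mathscr{D}}S_n(d)$ with $\mathscr{D}\subseteq\{d:d\mid\frac{n}{4}\}$ and each $S_n(d)$ equal to $G_n^1(d)$ or $G_n^3(d)$. Because the classes $G_n(d)$, and hence their sub-blocks $G_n^1(d),G_n^3(d)$, are pairwise disjoint for distinct $d$, this union is disjoint, so Corollary~\ref{iocg02} gives
\[
\mu_j=i\sum_{k\in\mathcal{C}}\bigl(w_n^{jk}-w_n^{-jk}\bigr)=\sum_{d\in\mathscr{D}}\;i\sum_{k\in S_n(d)}\bigl(w_n^{jk}-w_n^{-jk}\bigr).
\]

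I would then evaluate the $d$-th summand in the two cases. If $S_n(d)=G_n^1(d)$, then Corollary~\ref{iocg02} combined with Theorem~\ref{OriEigInt} (applied to the symbol set $G_n^1(d)$) shows that the summand equals $-s_{n/d}(j)$. If $S_n(d)=G_n^3(d)$, I would use the reflection identity $G_n^3(d)=\{\,n-k:k\in G_n^1(d)\,\}$, valid because $\gcd(n-k,n)=\gcd(k,n)$ and, as $n\equiv 0\Mod 4$ (so $n/d\equiv 0\Mod 4$ since $d\mid\frac{n}{4}$), the condition $k/d\equiv 1\Mod 4$ forces $(n-k)/d\equiv 3\Mod 4$. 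Substituting $k\mapsto n-k$ negates $w_n^{jk}-w_n^{-jk}$, so the $d$-th summand is the negative of the $G_n^1(d)$ summand, i.e.\ $-(-s_{n/d}(j))=s_{n/d}(j)$.

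Combining the two cases, $\mu_j=\sum_{d\in\mathscr{D}}\varepsilon_d\,s_{n/d}(j)$, where $\varepsilon_d=-1$ when $S_n(d)=G_n^1(d)$ and $\varepsilon_d=+1$ when $S_n(d)=G_n^3(d)$; this is exactly the asserted $\sum_{d\in\mathscr{D}}\pm s_{n/d}(j)$ formula. The one step needing genuine care is the $G_n^3$ case: confirming that $G_n^3(d)$ is precisely the reflection of $G_n^1(d)$ through $n/2$ (which is where $n\equiv 0\Mod 4$ is used) and that this reflection flips the sign of $s_{n/d}(j)$. Everything else is routine bookkeeping layered on Theorems~\ref{iocg13} and~\ref{OriEigInt}.
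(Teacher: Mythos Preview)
Your proposal is correct and follows essentially the same approach as the paper: decompose $\mathcal{C}$ via Theorem~\ref{iocg13}(ii), split the eigenvalue sum from Corollary~\ref{iocg02} over the disjoint blocks $S_n(d)$, and identify each block's contribution as $\pm s_{n/d}(j)$. The paper's proof is in fact terser---it writes the chain $\mu_j = i\sum_{k\in\mathcal{C}}(w_n^{jk}-w_n^{-jk}) = \sum_{d\in\mathscr{D}}\sum_{a\in S_n(d)}-\frac{w_n^{aj}-w_n^{-aj}}{i} = \sum_{d\in\mathscr{D}}\pm s_{n/d}(j)$ and leaves the sign in the $G_n^3(d)$ case implicit---whereas you spell out the reflection $G_n^3(d)=\{n-k:k\in G_n^1(d)\}$ explicitly, which is a welcome clarification rather than a different route.
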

	\begin{proof} By Theorem \ref{iocg13}, $\mathcal{C}= \bigcup\limits_{d\in \mathscr{D}}S_n(d)$, where $\mathscr{D} \subseteq \{ d: d\mid \frac{n}{4}\}$ and $S_n(d) = G_n^{1}(d)$ or $S_n(d)=G_n^{3}(d)$.
		Using Corollary~\ref{iocg02}, we get
		\begin{equation*}\label{}
			\begin{split}
				\mu_j &= i \sum\limits_{k\in {\mathcal{C}}} (w_n^{jk} - w_n^{-jk}) = \sum_{d \in \mathscr{D}}  \sum_{a\in S_n(d)} - \frac{w_n^{aj}-w_n^{-aj}}{i} = \sum_{d\in \mathscr{D}} \pm s_{n/d}(j).\\
			\end{split} 
		\end{equation*}
	\end{proof}
	
	\begin{corollary}
		Let $G=\text{Circ}(\mathbb{Z}_n,\mathcal{C})$ be an integral mixed circulant graph on $n$ vertices with $Sp_H(G)=\{ \gamma_0,..., \gamma_{n-1} \}$. Then 
		$$\gamma_j=\sum_{d\in \mathscr{D}_1} c_{n/d}(j) + \sum_{d\in \mathscr{D}_2} \pm s_{n/d}(j) \textnormal{ for } 0\leq j \leq n-1,$$ where $\mathscr{D}_1 \subseteq \{ d: d\mid n\}$, $\mathscr{D}_2 \subseteq \{ d: d\mid \frac{n}{4}\}$ and $\mathscr{D}_1 \cap \mathscr{D}_2 = \emptyset$.
	\end{corollary}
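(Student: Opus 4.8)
The plan is to reduce the statement to the two preceding eigenvalue formulas --- Wasin So's formula for integral circulant graphs and the Corollary immediately above for integral oriented circulant graphs --- via the additive splitting of Lemma~\ref{iocg03}. First I would invoke Theorem~\ref{iocg14}: since $G$ is integral, its symbol set decomposes as $\mathcal{C}\setminus\overline{\mathcal{C}}=\bigcup_{d\in\mathscr{D}_1}G_n(d)$ and $\overline{\mathcal{C}}=\bigcup_{d\in\mathscr{D}_2}S_n(d)$, where $\mathscr{D}_1\subseteq\{d:d\mid n\}$, $\mathscr{D}_2\subseteq\{d:d\mid \frac{n}{4}\}$, $\mathscr{D}_1\cap\mathscr{D}_2=\emptyset$, and each $S_n(d)$ equals $G_n^1(d)$ or $G_n^3(d)$. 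In particular this forces $n\equiv 0\Mod 4$ whenever $\mathscr{D}_2\neq\emptyset$, so each sum $s_{n/d}(j)$ appearing in the claim is well defined; when $\mathscr{D}_2=\emptyset$ the second sum is empty and $\overline{\mathcal{C}}=\emptyset$.

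Next I would write $\gamma_j=\lambda_j+\mu_j$ as in Lemma~\ref{iocg03}, where $\lambda_j=\sum_{k\in\mathcal{C}\setminus\overline{\mathcal{C}}}w_n^{jk}$ is the $j$-th eigenvalue of the (simple) circulant graph $\text{Circ}(\mathbb{Z}_n,\mathcal{C}\setminus\overline{\mathcal{C}})$ and $\mu_j=i\sum_{k\in\overline{\mathcal{C}}}(w_n^{jk}-w_n^{-jk})$ is the $j$-th eigenvalue of the oriented circulant graph $\text{Circ}(\mathbb{Z}_n,\overline{\mathcal{C}})$; both graphs are integral by Lemma~\ref{iocg24}. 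Since $\mathcal{C}\setminus\overline{\mathcal{C}}$ is a \emph{disjoint} union of the sets $G_n(d)$, $d\in\mathscr{D}_1$, and the assignment $\mathcal{C}'\mapsto\sum_{k\in\mathcal{C}'}w_n^{jk}$ is additive over disjoint unions, I would split $\lambda_j=\sum_{d\in\mathscr{D}_1}\sum_{k\in G_n(d)}w_n^{jk}$ and apply the theorem stated just before Theorem~\ref{OriEigInt} (which gives eigenvalue $c_{n/d}(j)$ for the symbol set $G_n(d)$) to each summand, obtaining $\lambda_j=\sum_{d\in\mathscr{D}_1}c_{n/d}(j)$.

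For the directed part I would apply the Corollary immediately preceding this one: for an integral oriented circulant graph with $\overline{\mathcal{C}}=\bigcup_{d\in\mathscr{D}_2}S_n(d)$ one has $\mu_j=\sum_{d\in\mathscr{D}_2}\pm s_{n/d}(j)$, the sign being $+$ when $S_n(d)=G_n^1(d)$ and $-$ when $S_n(d)=G_n^3(d)$; this is exactly where Theorem~\ref{OriEigInt} and the antisymmetry $s_{n/d}^{G_n^3(1)}=-s_{n/d}^{G_n^1(1)}$ are used. Adding the two contributions gives $\gamma_j=\sum_{d\in\mathscr{D}_1}c_{n/d}(j)+\sum_{d\in\mathscr{D}_2}\pm s_{n/d}(j)$, as asserted. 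The step requiring the most care is not any single computation but keeping the two index sets $\mathscr{D}_1,\mathscr{D}_2$ and the choice of sign for each $d\in\mathscr{D}_2$ consistent with the decomposition of $\mathcal{C}$ produced by Theorem~\ref{iocg14}; once that bookkeeping is fixed, the result follows at once from additivity and the two cited eigenvalue formulas.
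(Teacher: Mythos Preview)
Your proposal is correct and follows essentially the same route as the paper: invoke Theorem~\ref{iocg14} to decompose $\mathcal{C}$, split $\gamma_j=\lambda_j+\mu_j$ via Lemma~\ref{iocg03}, and identify the two pieces with $\sum_{d\in\mathscr{D}_1}c_{n/d}(j)$ and $\sum_{d\in\mathscr{D}_2}\pm s_{n/d}(j)$ respectively. One small slip: your parenthetical sign assignment is reversed, since Theorem~\ref{OriEigInt} gives $\mu_j=-s_{n/d}(j)$ when $S_n(d)=G_n^1(d)$ (and hence $+s_{n/d}(j)$ when $S_n(d)=G_n^3(d)$); this does not affect the argument, as the statement only claims $\pm s_{n/d}(j)$.
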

	\begin{proof} By Theorem \ref{iocg14}, $\mathcal{C} \setminus \overline{\mathcal{C}} = \bigcup\limits_{d\in \mathscr{D}_1}G_n(d)$ and
		$$\overline{\mathcal{C}}= \left\{ \begin{array}{lll}
			\emptyset & \mbox{if}
			& n\not\equiv 0\Mod 4 \\ \bigcup\limits_{d\in \mathscr{D}_2}S_n(d) & \mbox{if} & n\equiv 0\Mod 4, 
		\end{array}\right. $$ where $\mathscr{D}_1 \subseteq \{ d: d\mid n\}$, $\mathscr{D}_2 \subseteq \{ d: d\mid \frac{n}{4}\}$, $\mathscr{D}_1 \cap \mathscr{D}_2 = \emptyset$ and, $S_n(d) = G_n^{1}(d)$ or $S_n(d)=G_n^{3}(d)$. Using Lemma \ref{iocg03}, we get
		\begin{equation*}\label{}
			\begin{split}
				\gamma_j & =  \sum_{d \in \mathscr{D}_1} \sum_{a\in G_n(d)} w_n^{aj} +  \sum_{d \in \mathscr{D}_2}  \sum_{a\in S_n(d)} - \frac{w_n^{aj}-w_n^{-aj}}{i} = \sum_{d\in \mathscr{D}_1}  c_{n/d}(j) + \sum_{d\in \mathscr{D}_2} \pm s_{n/d}(j).
			\end{split} 
		\end{equation*}
	\end{proof}
	
	From Theorem~\ref{OriEigInt}, it is clear that $s_n(q)$ plays an important role in the eigenvalues of integral mixed circulant graph. In the next result we see some basic properties of $s_n(q)$.
	
	\begin{theorem} The following are some basic properties of $s_n(q)$.
		\begin{enumerate}[label=(\roman*)]
			\item $s_n(t_1)=s_n(t_2)$ for $t_1 \equiv t_2 \Mod n$.
			\item $s_n(n-t)=-s_n(t)$.
			\item $s_n(0)= s_n(\frac{n}{2})= 0$.
			\item $s_n(\frac{n}{4})= \varphi(n)$.
			\item $s_n(\frac{3n}{4})=- \varphi(n)$.
		\end{enumerate}
	\end{theorem}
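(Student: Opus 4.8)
The plan is to obtain all five identities by direct substitution into the defining expression
\[
s_n(q) = \sum_{a\in G_n^1(1)} \frac{w_n^{aq}-w_n^{-aq}}{i},
\qquad w_n=\exp\!\left(\tfrac{2\pi i}{n}\right),
\]
using two facts about the index set that are already available: every $a\in G_n^1(1)$ satisfies $a\equiv 1\Mod 4$ (so in particular $a$ is odd), and $|G_n^1(1)|=\varphi(n)/2$ by Corollary~\ref{monic}.

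First I would dispose of (i) and (ii), which are purely formal. Identity (i) is immediate from $w_n^n=1$: if $t_1\equiv t_2\Mod n$ then $w_n^{at_1}=w_n^{at_2}$ and $w_n^{-at_1}=w_n^{-at_2}$ for every $a$, so the two sums agree termwise. For (ii), substituting $q=n-t$ and using $w_n^n=1$ sends $w_n^{a(n-t)}$ to $w_n^{-at}$ and $w_n^{-a(n-t)}$ to $w_n^{at}$, so each summand changes sign and $s_n(n-t)=-s_n(t)$.

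Next, for (iii)--(v) I would evaluate $w_n$ at $n/4$, $n/2$, $3n/4$, which are integers since $n\equiv 0\Mod 4$, namely $w_n^{n/4}=i$, $w_n^{n/2}=-1$, $w_n^{3n/4}=-i$. For (iii): $s_n(0)=0$ since every summand is $(1-1)/i$, while at $q=n/2$ the oddness of $a$ gives $w_n^{\pm a n/2}=(-1)^a=-1$, so every summand vanishes (or, more quickly, (ii) with $t=n/2$ forces $s_n(n/2)=-s_n(n/2)$). For (iv): at $q=n/4$, the congruence $a\equiv 1\Mod 4$ gives $w_n^{a n/4}=i^a=i$ and $w_n^{-a n/4}=i^{-a}=i^{3}=-i$, so each of the $\varphi(n)/2$ summands equals $(i-(-i))/i=2$, whence $s_n(n/4)=\varphi(n)$. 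Finally (v) follows by combining (ii) with (iv): $s_n(3n/4)=s_n(n-n/4)=-s_n(n/4)=-\varphi(n)$.

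Nothing here is genuinely hard; the only place demanding a little attention is the bookkeeping of the fourth roots of unity in (iv), where the congruence $a\equiv 1\Mod 4$ is precisely what forces $i^a=i$, together with recalling that $G_n^1(1)$ has exactly $\varphi(n)/2$ elements.
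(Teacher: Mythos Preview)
Your proof is correct. The paper itself gives no proof of this theorem at all: it lists the five identities as ``basic properties'' of $s_n(q)$ and moves on immediately to the next lemma. Your direct termwise verification from the defining sum, using $a\equiv 1\Mod 4$ for $a\in G_n^1(1)$ and $|G_n^1(1)|=\varphi(n)/2$, is exactly the routine argument the authors are tacitly assuming; there is nothing in the paper to compare it against.
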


\begin{lema}\label{NewLemmaEquivaTrans1} Let $n=2^tm$ with $m$ odd. Then the following statements are true.
\begin{enumerate}[label=(\roman*)]
\item If $t\geq 2$ then $$G_{n}^1(1) = \left\{ \begin{array}{ll}
			3m+2G_{\frac{n}{2}}(1) & \mbox{if } m \equiv 1 \Mod 4  \\
			m+2G_{\frac{n}{2}}(1) & \mbox{if } m \equiv 3 \Mod 4.
		\end{array}\right. $$
\item If $t\geq 2$ then $$G_{n}^3(1) = \left\{ \begin{array}{ll}
			m+2G_{\frac{n}{2}}(1) & \mbox{if } m \equiv 1 \Mod 4  \\
			3m+2G_{\frac{n}{2}}(1) & \mbox{if } m \equiv 3 \Mod 4.
		\end{array}\right. $$
\item If $t\geq 2$ then $G_n(1)=\left( m+2G_{\frac{n}{2}}(1)\right) \cup \left(3m+2G_{\frac{n}{2}}(1)\right)$.
\end{enumerate}
\end{lema}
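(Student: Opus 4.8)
The plan is to exhibit the two affine maps $k\mapsto m+2k$ and $k\mapsto 3m+2k$ (read modulo $n$) as injections of $G_{n/2}(1)$ into $G_n(1)$, to identify their images as $G_n^1(1)$ or $G_n^3(1)$ by a congruence-mod-$4$ computation, and then to close by a cardinality count; part (iii) will be a free consequence of (i), (ii) and the disjoint decomposition $G_n(1)=G_n^1(1)\cup G_n^3(1)$. Throughout I would adopt the convention that $\alpha+\beta G_{n/2}(1)$ denotes $\{(\alpha+\beta k)\bmod n:\ k\in G_{n/2}(1)\}$ (the additive shift by $m$ or $3m$ can exceed $n$, so a reduction mod $n$ is genuinely needed, whereas $2G_{n/2}(1)\subseteq\{2,\dots,n-2\}$ needs none).

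First I would record the arithmetic inputs. Since $t\geq 2$ we have $n\equiv 0\Mod 4$, so Lemma~\ref{iocg4}(ii) gives $G_n(1)=G_n^1(1)\cup G_n^3(1)$ as a disjoint union, and (as noted in the proof of Corollary~\ref{monic}) $|G_n^1(1)|=|G_n^3(1)|=\varphi(n)/2$. Also $t\geq 2$ forces $n/2=2^{t-1}m$ to be even, whence $\varphi(n)=2\varphi(n/2)$ and therefore $|G_{n/2}(1)|=\varphi(n/2)=\varphi(n)/2$. Finally, any $k\in G_{n/2}(1)$ is odd (because $2\mid n/2$) and satisfies $\gcd(k,m)=1$. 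Next, for $k\in G_{n/2}(1)$ and $\epsilon\in\{m,3m\}$ put $a=(\epsilon+2k)\bmod n$; then $a$ is odd (sum of the odd $\epsilon$ and the even $2k$), and $\gcd(\epsilon+2k,m)=\gcd(2k,m)=\gcd(k,m)=1$ using that $m$ is odd, so $\gcd(a,n)=\gcd(a,2^tm)=1$ and hence $a\in G_n(1)$. Because $4\mid n$ and $k$ is odd, $a\equiv \epsilon+2k\equiv \epsilon+2\Mod 4$, which equals $3,1,1,3$ in the four cases $(\epsilon,m\bmod 4)=(m,1),(m,3),(3m,1),(3m,3)$; thus each of the two maps carries $G_{n/2}(1)$ into the appropriate one of $G_n^1(1)$, $G_n^3(1)$. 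Injectivity on $G_{n/2}(1)\subseteq\{1,\dots,n/2-1\}$ is immediate: $\epsilon+2k\equiv\epsilon+2k'\Mod n$ gives $(n/2)\mid(k-k')$, so $k=k'$. Comparing cardinalities $|G_{n/2}(1)|=\varphi(n)/2=|G_n^r(1)|$ promotes each inclusion to an equality, which is exactly the content of (i) and (ii). For (iii) I would simply observe that in either residue case the pair $\{m+2G_{n/2}(1),\,3m+2G_{n/2}(1)\}$ equals the pair $\{G_n^1(1),G_n^3(1)\}$, so their union is $G_n^1(1)\cup G_n^3(1)=G_n(1)$ by Lemma~\ref{iocg4}(ii).

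There is no substantive obstacle here; the only points needing a little care are bookkeeping ones: stating the mod-$n$ convention for the shifted sets, and noting that reduction modulo $n$ preserves both the gcd with $n$ and the residue modulo $4$ (since $4\mid n$), so that ``$a\in G_n^r(1)$'' can be tested on the unreduced representative $\epsilon+2k$. Everything else is the elementary congruence arithmetic above together with the $\varphi$-count $\varphi(2^tm)=2\varphi(2^{t-1}m)$ for $t\geq 2$.
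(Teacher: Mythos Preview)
Your proposal is correct and follows essentially the same approach as the paper: show that each affine shift lands inside the appropriate $G_n^r(1)$ via a gcd-and-mod-$4$ check, then conclude equality by the cardinality count $|G_{n/2}(1)|=\varphi(n)/2=|G_n^r(1)|$, with (iii) read off from Lemma~\ref{iocg4}(ii). The only difference is that you spell out the injectivity of $k\mapsto\epsilon+2k$ and the mod-$n$ reduction convention explicitly, which the paper leaves implicit.
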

\begin{proof}
\begin{enumerate}[label=(\roman*)]
\item Assume that $t\geq 2$ and $m \equiv 1 \Mod 4$. Let $3m+2r \in 3m+2G_{\frac{n}{2}}(1)$ for some $r \in G_{\frac{n}{2}}(1)$. Then $\gcd(r, \frac{n}{2})=1$, and it implies that $\gcd(3m+2r, n)=1$ and $3m+2r \equiv 1 \Mod 4$. Thus $3m+2r \in G_{n}^1(1)$. Since the size of $G_{n}^1(1)$ and $3m+2G_{\frac{n}{2}}(1)$ are same, $G_{n}^1(1)=3m+2G_{\frac{n}{2}}(1)$. Similarly, if $m \equiv 3 \Mod 4$ then $G_{n}^1(1)=m+2G_{\frac{n}{2}}(1)$.
\item The proof is similar to the proof of Part $(i)$.
\item Use Part $(i)$ and Part $(ii)$ in $G_n(1)=G_{n}^1(1) \cup G_{n}^3(1)$.
\end{enumerate}
\end{proof}

\begin{prop}
Let $n=2^tm$ with $m$ odd and $t \geq 2$. Then
$$ s_n(q) = \left\{ \begin{array}{ll}
			2 \sin \frac{3\pi q}{2^{t-1}} c_{\frac{n}{2}}(q) & \mbox{if } m \equiv 1 \Mod 4  \\
			2 \sin \frac{\pi q}{2^{t-1}} c_{\frac{n}{2}}(q) & \mbox{if } m \equiv 3 \Mod 4.
		\end{array}\right.$$
\end{prop}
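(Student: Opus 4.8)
The plan is to start from the sine form $s_n(q)=\sum_{a\in G_n^1(1)}2\sin\frac{2\pi aq}{n}$ and reindex the summation over $G_n^1(1)$ using Lemma~\ref{NewLemmaEquivaTrans1}. Since $t\geq 2$, that lemma identifies $G_n^1(1)$ with $3m+2G_{n/2}(1)$ when $m\equiv 1\Mod 4$ and with $m+2G_{n/2}(1)$ when $m\equiv 3\Mod 4$. So I would substitute $a=cm+2r$, where $c\in\{1,3\}$ is the appropriate constant and $r$ ranges over $G_{n/2}(1)$, noting that $n/2=2^{t-1}m$ is a legitimate positive integer because $t\geq 2$.

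The second step is the arithmetic simplification of the angle. Writing $n=2^tm$ one has $\frac{2\pi(cm+2r)q}{n}=\frac{c\pi q}{2^{t-1}}+\frac{2\pi rq}{n/2}$, so $s_n(q)=\sum_{r\in G_{n/2}(1)}2\sin\!\bigl(\frac{c\pi q}{2^{t-1}}+\frac{2\pi rq}{n/2}\bigr)$. Since the quantity $\frac{c\pi q}{2^{t-1}}$ does not depend on $r$, the angle-addition identity $\sin(A+B)=\sin A\cos B+\cos A\sin B$ gives
\[ s_n(q)=2\sin\tfrac{c\pi q}{2^{t-1}}\sum_{r\in G_{n/2}(1)}\cos\tfrac{2\pi rq}{n/2}+2\cos\tfrac{c\pi q}{2^{t-1}}\sum_{r\in G_{n/2}(1)}\sin\tfrac{2\pi rq}{n/2}. \]

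The last step evaluates the two inner sums. The combined sum $\sum_{r\in G_{n/2}(1)}w_{n/2}^{rq}$ is precisely the Ramanujan sum $c_{n/2}(q)$; since it is a real number, its imaginary part $\sum_{r\in G_{n/2}(1)}\sin\frac{2\pi rq}{n/2}$ vanishes, while its real part equals $\sum_{r\in G_{n/2}(1)}\cos\frac{2\pi rq}{n/2}$. (Alternatively, the vanishing of the sine sum follows from the invariance of $G_{n/2}(1)$ under $r\mapsto n/2-r$, which negates each $\sin\frac{2\pi rq}{n/2}$; the only possible fixed point $r=n/4$ occurs only when $n=4$, and there that term is $\sin(\pi q)=0$.) Hence $s_n(q)=2\sin\frac{c\pi q}{2^{t-1}}\,c_{n/2}(q)$, and inserting $c=3$ for $m\equiv 1\Mod 4$ and $c=1$ for $m\equiv 3\Mod 4$ yields the claimed identity. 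I do not expect a genuine obstacle here: the only points needing care are the reduction of the sine's argument modulo the relevant denominators and the symmetry argument that annihilates the sine sum, both of which are routine once Lemma~\ref{NewLemmaEquivaTrans1} is available.
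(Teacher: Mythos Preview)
Your argument is correct and follows essentially the same route as the paper's proof: both invoke Lemma~\ref{NewLemmaEquivaTrans1} to reindex $G_n^1(1)$ as $cm+2G_{n/2}(1)$, split off the $r$-independent angle, and then use that $\sum_{r\in G_{n/2}(1)}w_{n/2}^{rq}=c_{n/2}(q)$ is real. The only difference is cosmetic: the paper works with $w_n^{aq}-w_n^{-aq}$ and extracts $\Im(w_n^{cmq})$ after noting the remaining exponential sum is real, whereas you carry out the same factorisation via the sine angle-addition formula.
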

\begin{proof} Let $\Im(z)$ denote the imaginary part of a complex number $z$. We have
\begin{equation*}
\begin{split}
s_n(q) &= \sum_{a\in G_n^1(1)} \frac{w_n^{aq}-w_n^{-aq}}{i}\\
&=\left\{ \begin{array}{ll}
			(-i)\sum\limits_{a\in G_{\frac{n}{2}}^1(1)}( w_n^{(3m+2a)q}-w_n^{-(3m+2a)q}) & \mbox{if } m \equiv 1 \Mod 4  \\
			(-i)\sum\limits_{a\in G_{\frac{n}{2}}^1(1)} (w_n^{(m+2a)q}-w_n^{-(m+2a)q}) & \mbox{if } m \equiv 3 \Mod 4
		\end{array}\right.\\
&=\left\{ \begin{array}{ll}
			2\Im(\omega_n^{3mq}) \sum\limits_{a\in G_{\frac{n}{2}}^1(1)} w_n^{2aq} & \mbox{if } m \equiv 1 \Mod 4  \\
			2\Im(\omega_n^{mq})\sum\limits_{a\in G_{\frac{n}{2}}^1(1)} w_n^{2aq} & \mbox{if } m \equiv 3 \Mod 4
		\end{array}\right.\\
&=\left\{ \begin{array}{ll}
			2\Im(\omega_n^{3mq}) c_{\frac{n}{2}}(q) & \mbox{if } m \equiv 1 \Mod 4  \\
			2\Im(\omega_n^{mq})c_{\frac{n}{2}}(q) & \mbox{if } m \equiv 3 \Mod 4
		\end{array}\right.\\
&=\left\{ \begin{array}{ll}
			2\Im(\omega_{2^t}^{3q}) c_{\frac{n}{2}}(q) & \mbox{if } m \equiv 1 \Mod 4  \\
			2\Im(\omega_{2^t}^{q})c_{\frac{n}{2}}(q) & \mbox{if } m \equiv 3 \Mod 4
		\end{array}\right.\\
&=\left\{ \begin{array}{ll}
			2 \sin \frac{3\pi q}{2^{t-1}} c_{\frac{n}{2}}(q) & \mbox{if } m \equiv 1 \Mod 4  \\
			2 \sin \frac{\pi q}{2^{t-1}} c_{\frac{n}{2}}(q) & \mbox{if } m \equiv 3 \Mod 4.
		\end{array}\right.\\
\end{split} 
\end{equation*} 
Here the second equality follows from Part $(i)$ of Lemma~\ref{NewLemmaEquivaTrans1}.
\end{proof}

	\begin{prop}
		Let $n=2^k$ and $k\geq 2$. Then 
		$$s_n(t)= \left\{ \begin{array}{cl}
			2^{k-1}& \textnormal{ if $t\equiv  2^{k-2} \Mod n$ } \\ 
			-2^{k-1}& \textnormal{ if $t\equiv 3\cdot 2^{k-2} \Mod n$ } \\
			0 & \textnormal{ otherwise.} 
		\end{array}\right. $$
	\end{prop}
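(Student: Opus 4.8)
The plan is to specialize the preceding proposition to the case $m=1$ and then evaluate $c_{2^{k-1}}(q)$ through the closed form for Ramanujan sums. Writing $n=2^{k}=2^{t}m$ with $t=k$ and $m=1$, we have $m\equiv 1\Mod 4$, so that proposition gives $s_n(q)=2\sin\!\big(\frac{3\pi q}{2^{k-1}}\big)\,c_{2^{k-1}}(q)$ for all $q\in\mathbb{N}$. Both factors on the right depend on $q$ only modulo $n=2^{k}$ (the sine because $3\pi\cdot 2^{k}/2^{k-1}=6\pi$, and $c_{2^{k-1}}$ because it depends only on $\gcd(2^{k-1},q)$), so it suffices to determine $s_n(q)$ on residues $q\bmod 2^{k}$.

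First I would dispose of the sine factor: since $\gcd(3,2^{k-1})=1$, we have $\sin\!\big(\frac{3\pi q}{2^{k-1}}\big)=0$ exactly when $2^{k-1}\mid q$, and hence $s_n(q)=0$ whenever $2^{k-1}\mid q$, in particular for $q\equiv 0$ and $q\equiv 2^{k-1}\Mod{2^{k}}$. Next I would apply Lemma~\ref{murty2008problems}: with $\delta=\gcd(2^{k-1},q)$ it gives $c_{2^{k-1}}(q)=\mu(2^{k-1}/\delta)\,\varphi(2^{k-1})/\varphi(2^{k-1}/\delta)$. Since $2^{k-1}/\delta$ is a power of $2$, $\mu(2^{k-1}/\delta)=0$ unless $2^{k-1}/\delta\in\{1,2\}$, i.e.\ unless $2^{k-2}\mid q$; the subcase $\delta=2^{k-1}$ has already been killed by the sine factor, while the subcase $\delta=2^{k-2}$ (that is, $2^{k-2}\mid q$ but $2^{k-1}\nmid q$) gives $c_{2^{k-1}}(q)=\mu(2)\varphi(2^{k-1})/\varphi(2)=-2^{k-2}$. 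Consequently $s_n(q)\ne 0$ only when $q=2^{k-2}\ell$ with $\ell$ odd.

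For such $q$ one has $\frac{3\pi q}{2^{k-1}}=\frac{3\pi\ell}{2}$, and since $3\ell\equiv 3\Mod 4$ when $\ell\equiv 1\Mod 4$ whereas $3\ell\equiv 1\Mod 4$ when $\ell\equiv 3\Mod 4$, we get $\sin\!\big(\frac{3\pi q}{2^{k-1}}\big)=-1$ in the first case and $+1$ in the second. Multiplying by $2$ and by $c_{2^{k-1}}(q)=-2^{k-2}$ yields $s_n(q)=2^{k-1}$ when $\ell\equiv 1\Mod 4$ and $s_n(q)=-2^{k-1}$ when $\ell\equiv 3\Mod 4$. Finally, translating back, $\ell\equiv 1\Mod 4$ is the same as $q=2^{k-2}\ell\equiv 2^{k-2}\Mod{2^{k}}$ and $\ell\equiv 3\Mod 4$ the same as $q\equiv 3\cdot 2^{k-2}\Mod{2^{k}}$, and all residues $q$ with $2^{k-2}\nmid q$ or $2^{k-1}\mid q$ give $s_n(q)=0$; collecting the cases produces exactly the stated formula.

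The argument is almost entirely bookkeeping, so I expect no real obstacle; the only points requiring a little care are the coprimality $\gcd(3,2^{k-1})=1$ used to locate the zeros of the sine, and the dependence of the sign of $\sin(3\pi\ell/2)$ on $\ell\bmod 4$. A self-contained alternative avoids the preceding proposition altogether: for $n=2^{k}$ one checks $G_n^{1}(1)=\{\,1+4j:0\le j\le 2^{k-2}-1\,\}$, hence $s_n(q)=2\,\Im\!\big(w_n^{q}\sum_{j=0}^{2^{k-2}-1}(w_n^{4q})^{j}\big)$; the inner geometric sum over the $2^{k-2}$-th roots of unity raised to $q$ equals $2^{k-2}$ if $2^{k-2}\mid q$ and $0$ otherwise, and evaluating $2^{k-1}\Im(w_n^{q})=2^{k-1}\sin(\pi\ell/2)$ on the surviving residues $q=2^{k-2}\ell$ again yields the three cases.
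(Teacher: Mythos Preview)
Your proposal is correct. The main route you take, however, differs from the paper's: the paper observes that for $n=2^{k}$ one has $G_n^1(1)=M_n^1(1)$, writes
\[
s_n(t)=\frac{1}{i}\bigl(w_n^{t}-w_n^{3t}+w_n^{5t}-\cdots-w_n^{(n-1)t}\bigr),
\]
and evaluates this single geometric progression with ratio $-w_n^{2t}$ directly. Your primary argument instead specializes the preceding product formula $s_n(q)=2\sin\!\big(\tfrac{3\pi q}{2^{k-1}}\big)\,c_{2^{k-1}}(q)$ and then appeals to the closed form for Ramanujan sums (Lemma~\ref{murty2008problems}) to locate the nonzero values. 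Both are short; the paper's is a one-line self-contained computation, while yours illustrates that the $n=2^{k}$ case is a clean corollary of the general product decomposition. The alternative you sketch at the end---factoring $s_n(q)=2\,\Im\!\big(w_n^{q}\sum_{j}(w_n^{4q})^{j}\big)$ and summing the inner geometric series---is essentially a repackaging of the paper's own argument.
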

	\begin{proof}
		Since $n=2^k$ implies that $G_n^1(1)= M_n^1(1)$, we have
		\begin{equation*}
			\begin{split}
				s_n(t) &=\sum_{a\in M_n^1(1) } \frac{w_n^{at}-w_n^{-at}}{i}\\
				&= \frac{1}{i} (w_n^t -w_n^{3t}+ w_n^{5t}-\ldots -w_n^{(n-1)t} ).
			\end{split} 
		\end{equation*}
		As $s_n(t)$ is a sum of a geometric progression with common ratio $-w_n^{2t}$,  the required result follows.
	\end{proof}

	The classical M$\ddot{\text{o}}$bius function $\mu(n)$ is defined by 
	$$\mu(n)= \left\{ \begin{array}{cl}
		1& \mbox{if } n=1 \\ 
		(-1)^k & \mbox{if }  n \textnormal{ is a product of $k$ distinct primes} \\
		0   &\mbox{otherwise.} 
	\end{array}\right. $$
	
	Let $\delta$ be the indicator function defined by
	$$\delta(n)= \left\{ \begin{array}{lll}
		1& \mbox{if}
		& n=1 \\ 0 & \mbox{if} & n>1 . 
	\end{array}\right. $$
	
	\begin{theorem} \cite{murty2008problems} $$ \sum_{d\mid n} \mu(d) =\delta(n).$$
	\end{theorem}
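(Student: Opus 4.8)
The plan is to split the verification into the two cases $n=1$ and $n>1$. When $n=1$ the only divisor is $d=1$, so $\sum_{d\mid 1}\mu(d)=\mu(1)=1=\delta(1)$, and there is nothing further to check.

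For $n>1$, I would write $n=p_1^{a_1}\cdots p_k^{a_k}$ with $p_1,\dots,p_k$ distinct primes and $k\geq 1$. The first step is the reduction to squarefree divisors: since $\mu(d)=0$ whenever $d$ is not squarefree, only squarefree divisors contribute to $\sum_{d\mid n}\mu(d)$, and these are exactly the products $\prod_{j\in S}p_j$ as $S$ ranges over subsets of $\{1,\dots,k\}$ (each arising exactly once), with $\mu\bigl(\prod_{j\in S}p_j\bigr)=(-1)^{|S|}$. Grouping these divisors by $|S|=r$, of which there are $\binom{k}{r}$, gives
$$\sum_{d\mid n}\mu(d)=\sum_{S\subseteq\{1,\dots,k\}}(-1)^{|S|}=\sum_{r=0}^{k}\binom{k}{r}(-1)^r=(1-1)^k=0=\delta(n),$$
where the second-to-last equality is the binomial theorem. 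This finishes the proof.

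An alternative route I would mention is to observe that $f(n):=\sum_{d\mid n}\mu(d)$ is multiplicative, being a divisor sum of the multiplicative function $\mu$, so it suffices to evaluate $f$ on prime powers: for $a\geq 1$ one has $f(p^a)=\mu(1)+\mu(p)=1+(-1)=0$ because $\mu(p^j)=0$ for $j\geq 2$, and hence $f(n)=\prod_i f(p_i^{a_i})=0$ whenever $n>1$, while $f(1)=1$.

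The computation is entirely routine, so I do not anticipate a genuine obstacle; the only step that warrants an explicit line of justification is the reduction to squarefree divisors, i.e. the observation that every squarefree divisor of $n$ is uniquely of the form $\prod_{j\in S}p_j$ with $\mu$-value $(-1)^{|S|}$.
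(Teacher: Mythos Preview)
Your proof is correct and is the standard textbook argument for this classical identity. The paper itself does not supply a proof; it merely cites the result from \cite{murty2008problems}, so there is nothing to compare against beyond noting that your argument (either the direct binomial count over squarefree divisors or the multiplicativity reduction to prime powers) is exactly what one finds in any standard reference.
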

	
	E. Cohen \cite{cohen} introduced a generalized M$\ddot{\text{o}}$bius inversion formula of arbitrary direct factor sets. Let $P$ and $Q$ be two non-empty subsets of $\mathbb{N}$ such that if $n_1,n_2 \in \mathbb{N}, ~\gcd(n_1,n_2) = 1$, then $n = n_1n_2 \in P$ (resp. $n = n_1n_2 \in Q$) if and only if $ n_1,n_2 \in P$ (resp. $ n_1,n_2 \in Q$). If each integer $n \in \mathbb{N}$ possesses a unique factorization of the form $n = ab$  $(a \in P, b \in Q)$, then the sets $P$ and $Q$ are called direct factor sets of $\mathbb{N}$. In what follows, $P$ will denote such a direct factor set with (conjugate) factor set $Q$.
	The M$\ddot{\text{o}}$bius function can be generalized to an arbitrary direct factor set $P$ by writing 
$$\mu_P(n)= \sum_{d\mid n,d\in P} \mu \left(\frac{n}{d}\right),$$
 where $\mu$ is the classical M$\ddot{\text{o}}$bius function. For example, $\mu_1(n) = \mu_{\{1\}}(n)$ and $\mu_{\mathbb{N}}(n) =\delta(n)$. 
	
	\begin{theorem}\cite{cohen} $$ \sum_{d\mid n, d\in Q} \mu_P\left( \frac{n}{d} \right)=\delta(n).$$
	\end{theorem}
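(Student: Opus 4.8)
The plan is to unfold the definition of $\mu_P$, re-interpret the resulting iterated sum as a single sum over factorizations $n=def$ with $d\in Q$ and $e\in P$, and then collapse it using the defining property of direct factor sets together with the classical M\"obius identity $\sum_{k\mid n}\mu(k)=\delta(n)$ recorded just above.

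First I would write, using $\mu_P(k)=\sum_{e\mid k,\,e\in P}\mu(k/e)$,
\[
\sum_{\substack{d\mid n\\ d\in Q}}\mu_P\!\left(\frac{n}{d}\right)
=\sum_{\substack{d\mid n\\ d\in Q}}\ \sum_{\substack{e\mid n/d\\ e\in P}}\mu\!\left(\frac{n}{de}\right)
=\sum_{\substack{d,e,f\in\mathbb{N},\ def=n\\ d\in Q,\ e\in P}}\mu(f),
\]
where the last sum runs over all ordered triples $(d,e,f)$ of positive integers with $def=n$, $d\in Q$, $e\in P$ (here $f=n/(de)$ is determined by $d$ and $e$). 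Next I would group these triples by the value $m=de$. For each divisor $m$ of $n$, the hypothesis that $P$ and $Q$ form direct factor sets of $\mathbb{N}$ says exactly that $m$ admits a \emph{unique} factorization $m=de$ with $e\in P$ and $d\in Q$; consequently there is precisely one triple in the sum with $de=m$, and for that triple $f=n/m$. (Applying this to $m=1$ also shows $1\in P$ and $1\in Q$, so none of the sums are vacuous.) Hence the triple sum equals $\sum_{m\mid n}\mu(n/m)$, and therefore
\[
\sum_{\substack{d\mid n\\ d\in Q}}\mu_P\!\left(\frac{n}{d}\right)
=\sum_{m\mid n}\mu\!\left(\frac{n}{m}\right)
=\sum_{k\mid n}\mu(k)
=\delta(n),
\]
the last equality being the classical identity quoted above.

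This argument is short and has no genuine obstacle; the one step that must be handled with care is the regrouping, where one invokes the \emph{uniqueness} clause in the definition of a direct factor set to guarantee that each divisor $m$ of $n$ arises exactly once as a product $de$ with $d\in Q$ and $e\in P$ (the bijection between divisors $m\mid n$ and admissible pairs $(d,e)$). It is perhaps worth noting that the other clause in the definition of direct factor sets (closure under coprime products) is not needed for this particular identity: only the unique-factorization property is used.
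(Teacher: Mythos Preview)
Your argument is correct. The paper does not supply its own proof of this theorem; it is simply quoted from Cohen \cite{cohen} without demonstration. Your unfolding of $\mu_P$, rewriting the double sum as a sum over triples $(d,e,f)$ with $def=n$, $d\in Q$, $e\in P$, and then invoking the unique-factorization property of the direct factor pair $(P,Q)$ to collapse the sum to $\sum_{m\mid n}\mu(n/m)=\delta(n)$, is the standard and natural proof, and each step is sound. Your closing remark that only the uniqueness clause (not the multiplicative closure clause) is used here is also accurate.
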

	
	\begin{theorem}\cite{cohen}\label{iocg31} If $f(n)$ and $g(n)$ are arithmetic functions then 
		$$f(n)= \sum_{d\mid n, d\in Q} g \left(\frac{n}{d} \right) \textnormal{ if and only if } g(n)= \sum_{d\mid n} f(d) \mu_P \left( \frac{n}{d}\right).$$
	\end{theorem}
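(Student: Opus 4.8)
The plan is to derive the equivalence by the classical M\"obius-inversion technique, the only non-formal ingredient being the orthogonality relation $\sum_{d\mid n,\,d\in Q}\mu_P(n/d)=\delta(n)$ proved just above. Both implications reduce to a single substitution-and-interchange computation, so I would carry out the forward direction in detail and then note that the converse is its mirror image.

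For the ``only if'' part, assume $f(n)=\sum_{d\mid n,\,d\in Q} g(n/d)$ for every $n$. I would substitute this into the candidate inverse formula and rearrange the resulting double sum:
\begin{equation*}
\sum_{d\mid n} f(d)\,\mu_P\!\left(\frac{n}{d}\right)
=\sum_{d\mid n}\mu_P\!\left(\frac{n}{d}\right)\sum_{\substack{e\mid d\\ e\in Q}} g\!\left(\frac{d}{e}\right)
=\sum_{c\mid n} g(c)\sum_{\substack{e\mid n/c\\ e\in Q}}\mu_P\!\left(\frac{n/c}{e}\right)
=\sum_{c\mid n} g(c)\,\delta\!\left(\frac{n}{c}\right)=g(n).
\end{equation*}
Here the second equality is the reindexing $(d,e)\mapsto(c,e)$ with $c=d/e$ (so $d=ec$), which replaces the constraint ``$d\mid n$, $e\mid d$, $e\in Q$'' by ``$e\in Q$, $ec\mid n$''; the third equality applies the orthogonality relation with $n/c$ in place of $n$; and the last holds because $\delta(n/c)=1$ exactly when $c=n$.

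For the ``if'' part, assume $g(n)=\sum_{d\mid n} f(d)\,\mu_P(n/d)$. I would substitute into $\sum_{d\mid n,\,d\in Q} g(n/d)$ and regroup the pairs $(d,e)$ satisfying ``$d\in Q$, $d\mid n$, $e\mid n/d$'' (equivalently ``$e\mid n$, $d\in Q$, $d\mid n/e$'') to obtain
\begin{equation*}
\sum_{\substack{d\mid n\\ d\in Q}} g\!\left(\frac{n}{d}\right)
=\sum_{e\mid n} f(e)\sum_{\substack{d\mid n/e\\ d\in Q}}\mu_P\!\left(\frac{n/e}{d}\right)
=\sum_{e\mid n} f(e)\,\delta\!\left(\frac{n}{e}\right)=f(n),
\end{equation*}
again using the orthogonality relation and the support of $\delta$.

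The displayed steps are routine interchanges in a finite double sum; the single point that deserves a line of verification is that the two reindexings are genuine bijections onto the stated index sets, the first with explicit inverse $(c,e)\mapsto(ec,e)$ and the second handled symmetrically. I do not anticipate a real obstacle: all the number-theoretic content has already been packaged into the relation $\sum_{d\mid n,\,d\in Q}\mu_P(n/d)=\delta(n)$, and the theorem then follows formally, by interchanging the order of summation in a finite double sum and collapsing with $\delta$.
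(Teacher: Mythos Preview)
Your argument is correct: both directions are the standard substitution-and-interchange computation, and the only nontrivial input is precisely the orthogonality relation $\sum_{d\mid n,\,d\in Q}\mu_P(n/d)=\delta(n)$ stated just before. The paper itself does not supply a proof of this theorem; it is quoted from Cohen~\cite{cohen}, so there is nothing to compare against beyond noting that your write-up is exactly the classical proof one would expect.
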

	
	Let $P=\{2^k: k\geq 0 \}$, and $Q$ the set of all odd natural numbers. It is clear that $P$ and $Q$ are  direct factor sets. The following lemma is easy to prove. 
	
	\begin{lema}\label{MuValueOnOddEven} Let $P=\{2^k: k\geq 0 \}$. Then
		$$\mu_P(n)= \left\{ \begin{array}{lll}
		0& \mbox{if $n$ is even}
		\\ \mu(n) & \mbox{if $n$ is odd.} 
	\end{array}\right. $$
	\end{lema}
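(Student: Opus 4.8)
The plan is to unwind the definition $\mu_P(n)=\sum_{d\mid n,\,d\in P}\mu(n/d)$, where $P=\{2^k:k\ge 0\}$, and to evaluate it by a short case analysis on the parity of $n$, using only the classical Möbius function and its multiplicativity.

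If $n$ is odd, the only power of $2$ dividing $n$ is $1$, so the sum reduces to the single term $\mu(n/1)=\mu(n)$, which gives the stated value in the odd case.

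If $n$ is even, write $n=2^{t}m$ with $m$ odd and $t\ge 1$. The divisors of $n$ lying in $P$ are precisely $1,2,\dots,2^{t}$, so
\[
\mu_P(n)=\sum_{j=0}^{t}\mu\!\left(\frac{n}{2^{j}}\right)=\sum_{\ell=0}^{t}\mu\!\left(2^{\ell}m\right).
\]
For $\ell\ge 2$ the integer $2^{\ell}m$ is divisible by $4$, hence $\mu(2^{\ell}m)=0$; since $t\ge 1$ both indices $\ell=0$ and $\ell=1$ occur, and these are the only terms that can be nonzero. Because $\mu$ is multiplicative and $\gcd(2,m)=1$, we have $\mu(2m)=\mu(2)\mu(m)=-\mu(m)$, and therefore
\[
\mu_P(n)=\mu(m)+\mu(2m)=\mu(m)-\mu(m)=0,
\]
as claimed (and if $m$ is not squarefree both surviving terms already vanish).

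The argument is entirely elementary, so I do not expect a genuine obstacle; the only point worth flagging is that for even $n$ one must invoke $t\ge 1$ to guarantee that the term $\ell=1$ appears in the sum, since it is precisely the pairing of $\mu(m)$ with $\mu(2m)$ that produces the cancellation.
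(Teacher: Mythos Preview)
Your argument is correct: the odd case is immediate, and in the even case the substitution $\ell=t-j$ together with $\mu(2^\ell m)=0$ for $\ell\ge 2$ and $\mu(2m)=-\mu(m)$ yields the claimed cancellation. The paper does not supply a proof at all (it simply remarks that the lemma is easy), so there is nothing to compare your approach against; your write-up fills that gap cleanly.
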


	\begin{theorem}\label{sumformula} 
		Let $n\equiv 0 \mod 4$ and $D_n^3=\emptyset$. Then $$s_n(t)=2 \delta_t \mu_P(n_t) \sum_{\substack{e \mid \delta_t \\ \frac{te}{\delta_t} \textnormal{ is odd}\\ \gcd(n_t,e)=1 }} (-1)^{\frac{te-\delta_t}{2\delta_t}} \frac{\mu_P(e)}{e},$$ where $\delta_t = \gcd(\frac{n}{4},t)$ and $n_t=\frac{n}{4\delta_t}$.
	\end{theorem}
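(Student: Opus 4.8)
The plan is to recover $s_n(t)$ from an auxiliary ``unrestricted'' sum that has an elementary closed form, via Möbius inversion. For a positive integer $N\equiv 0\Mod 4$ put
\begin{equation*}
T_N(t)=\sum_{a\in M_N^1(1)}\frac{w_N^{at}-w_N^{-at}}{i},
\end{equation*}
the analogue of $s_N(t)$ with $M_N^1(1)$ in place of $G_N^1(1)$. Since $M_N^1(1)=\{\,4j+1:0\le j\le N/4-1\,\}$, each of $\sum_{a\in M_N^1(1)}w_N^{at}$ and $\sum_{a\in M_N^1(1)}w_N^{-at}$ is a geometric series; summing them (using $w_N^{N/4}=i$) gives
\begin{equation*}
T_N(t)=\begin{cases} N/2 & \text{if } t\equiv N/4\Mod N,\\ -N/2 & \text{if } t\equiv 3N/4\Mod N,\\ 0 & \text{otherwise.}\end{cases}
\end{equation*}
In particular $T_N(t)\neq 0$ forces $\tfrac N4\mid t$ with $t/(N/4)$ odd, that is $v_2(t)=v_2(N/4)$, where $v_2$ denotes the $2$-adic valuation.

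Write $n=2^\alpha m$ with $m$ odd and $\alpha\ge 2$; then $D_n=D_{n/4}$ is the set of divisors of $m$, and the hypothesis $D_n^3=\emptyset$ says that every divisor of $m$ is $\equiv 1\Mod 4$ (divisors of $m$ are odd, hence $\equiv 1$ or $3\Mod 4$). For each $\ell\mid m$ one has $D_{2^{\alpha-2}\ell}^3=\emptyset$ as well, so Lemma~\ref{iocg4}(iii) (with $d=1$) gives the disjoint union $M_{2^\alpha\ell}^1(1)=\bigcup_{h\mid\ell}G_{2^\alpha\ell}^1(h)$; combining this with $G_{2^\alpha\ell}^1(h)=h\,G_{2^\alpha\ell/h}^1(1)$ and $w_{2^\alpha\ell}^{h}=w_{2^\alpha\ell/h}$ yields $T_{2^\alpha\ell}(t)=\sum_{d\mid\ell}s_{2^\alpha d}(t)$. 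Classical Möbius inversion over the divisor lattice of the odd integer $m$ (equivalently, the instance $P=\{2^k\}$ of Cohen's formula, Theorem~\ref{iocg31}) then gives
\begin{equation*}
s_n(t)=\sum_{d\mid m}\mu\!\left(\tfrac{m}{d}\right)T_{2^\alpha d}(t).
\end{equation*}

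Next I would substitute the closed form of $T_{2^\alpha d}(t)$ and split on $v_2(t)$. If $v_2(t)\neq\alpha-2=v_2(n/4)$ then every term vanishes, so $s_n(t)=0$; the right-hand side of the claimed identity also vanishes, since for $v_2(t)<\alpha-2$ the integer $n_t$ is even and $\mu_P(n_t)=0$ by Lemma~\ref{MuValueOnOddEven}, while for $v_2(t)>\alpha-2$ one has $v_2(te/\delta_t)\ge 1$ for every $e\mid\delta_t$, so the index set is empty. In the remaining case write $t=2^{\alpha-2}u$ with $u$ odd and set $g=\gcd(m,u)$, so $\delta_t=2^{\alpha-2}g$, $n_t=m/g$ is odd, and $\mu_P(n_t)=\mu(m/g)$. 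Here $T_{2^\alpha d}(t)\neq 0$ exactly when $d\mid g$, with value $2^{\alpha-1}d\,(-1)^{(u/d-1)/2}$, so, after the substitution $d=g/e$,
\begin{equation*}
s_n(t)=2^{\alpha-1}\sum_{d\mid g}\mu\!\left(\tfrac{m}{d}\right)(-1)^{\frac{u/d-1}{2}}d=2^{\alpha-1}g\sum_{e\mid g}\mu\!\left(\tfrac{m}{g}e\right)(-1)^{\frac{ue/g-1}{2}}\frac{1}{e}.
\end{equation*}
Since $\mu\big(\tfrac mg e\big)=\mu(m/g)\mu(e)$ when $\gcd(m/g,e)=1$ and $=0$ otherwise, the last sum equals $\mu(m/g)\sum_{e\mid g,\ \gcd(m/g,e)=1}(-1)^{(ue/g-1)/2}\mu(e)/e$. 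To finish I would match this with the asserted expression: there $2\delta_t=2^{\alpha-1}g$ and $\mu_P(n_t)=\mu(m/g)$; because $g\mid u$, for $e\mid\delta_t$ the condition ``$te/\delta_t=ue/g$ is odd'' is equivalent to $e\mid g$; $\mu_P(e)=\mu(e)$ for such (odd) $e$; and the exponent $\tfrac{te-\delta_t}{2\delta_t}$ equals $\tfrac{ue/g-1}{2}$. These identifications make the two expressions coincide.

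The geometric-series evaluation of $T_N(t)$ and the inversion step are routine; the real work is the last case, where one must verify that the constraints ``$te/\delta_t$ odd'' and ``$\gcd(n_t,e)=1$'' in the statement correspond precisely, under $d\leftrightarrow g/e$, to the support of the inverted sum, and keep track of the $2$-adic valuations carefully so as to see that both sides vanish whenever $v_2(t)\neq v_2(n/4)$.
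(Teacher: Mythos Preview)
Your proof is correct and follows essentially the same route as the paper: define the auxiliary geometric-series sum over $M_N^1(1)$, use Lemma~\ref{iocg4}(iii) (with $D^3=\emptyset$) to express it as a divisor sum of $s_{\cdot}(t)$, M\"obius-invert, and then substitute and reindex via $d\leftrightarrow \delta_t/e$. The only cosmetic difference is that you work with the odd part $m$ and classical $\mu$ together with an explicit $2$-adic case split, whereas the paper carries $\mu_P$ throughout and lets the conditions emerge from the change of variables; your treatment of the vanishing cases $v_2(t)\neq v_2(n/4)$ is in fact a bit more explicit than the paper's.
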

	\begin{proof}
		Let \begin{equation*}
			\begin{split}
				f_n(t) &=\sum_{a\in M_n^1(1) } \frac{w_n^{at}-w_n^{-at}}{i}\\
				&= \frac{1}{i} (w_n^t -w_n^{3t}+ w_n^{5t}-\ldots -w_n^{(n-1)t} ).
			\end{split} 
		\end{equation*}
		As $f_n(t)$ is a sum of a geometric progression with common ratio $-w_n^{2t}$, we find that 
		$$f_n(t)= \left\{ \begin{array}{rl}
			\frac{n}{2}& \mbox{ if }  t\equiv \frac{n}{4} \Mod n \\
			-\frac{n}{2}& \mbox{ if }  t\equiv \frac{3n}{4} \Mod n \\
			0 &\mbox{ otherwise.} 
		\end{array}\right. $$
		Using Lemma~\ref{iocg4}, we have
		\begin{equation*}
			\begin{split}
				f_n(t)= \sum_{a\in M_n^1(1) } \frac{w_n^{at}-w_n^{-at}}{i}=  \sum_{d\in D_n^1} \sum_{a\in G_n^1(d) } \frac{w_n^{at}-w_n^{-at}}{i} &=  \sum_{d\in D_n^1} \sum_{a\in dG_{n/d}^1(1) } \frac{w_n^{at}-w_n^{-at}}{i} \\
				&= \sum_{d\in D_n^1} \sum_{a\in G_{n/d}^1(1) } \frac{(w_n^d)^{at}-(w_n^d)^{-at}}{i} \\
				&= \sum_{d\in D_n^1} s_{n/d}(t).
			\end{split} 
		\end{equation*} 
In the last equation, we use the fact that $w^d=\exp(\frac{2\pi i}{n/d})$ is an $\frac{n}{d}$-th root of unity. From Theorem~\ref{iocg31}, 
		\begin{equation*}
			\begin{split}
				s_n(t)&=\sum_{\substack{d \mid n \\ t\equiv \frac{d}{4} \Mod d }} f_d(t) \mu_P\left(\frac{n}{d}\right) + \sum_{\substack{d \mid n \\ t\equiv \frac{3d}{4} \Mod d }} f_d(t) \mu_P\left(\frac{n}{d}\right)\\
				&=\sum_{\substack{d \mid n \\ t\equiv \frac{d}{4} \Mod d }} \frac{d}{2} \mu_P\left(\frac{n}{d}\right) - \sum_{\substack{d \mid n \\ t\equiv \frac{3d}{4} \Mod d }} \frac{d}{2} \mu_P\left(\frac{n}{d}\right)\\
				&=\sum_{\substack{4d \mid n \\ t\equiv d \Mod {4d} }} 2d \mu_P\left(\frac{n}{4d}\right) - \sum_{\substack{4d \mid n \\ t\equiv 3d \Mod {4d} }} 2d \mu_P\left(\frac{n}{4d}\right)\\
				&=\sum_{\substack{d \mid \frac{n}{4} \\ \frac{t}{d}\equiv 1 \Mod {4} }} 2d \mu_P\left(\frac{n}{4d}\right) - \sum_{\substack{d \mid \frac{n}{4} \\ \frac{t}{d} \equiv 3 \Mod {4} }} 2d \mu_P\left(\frac{n}{4d}\right)\\
				&=\sum_{\substack{d \mid \gcd(\frac{n}{4},t) \\ \frac{t}{d} \textnormal{ is odd} }} (-1)^{\frac{t-d}{2d}} 2d \mu_P \left(\frac{n}{4d} \right).\\
			\end{split} 
		\end{equation*} 
Let $\delta_t = \gcd(\frac{n}{4},t)$ and $\frac{n}{4}=\delta_t n_t$. We have 
		
		\begin{equation*}
			\begin{split}
				s_n(t) =\sum_{\substack{d \mid \delta_t \\ \frac{t}{d} \textnormal{ is odd} }} (-1)^{\frac{t-d}{2d}} 2d \mu_P\left(\frac{n}{4d}\right) &=\sum_{\substack{de = \delta_t \\ \frac{t}{d} \textnormal{ is odd} }} (-1)^{\frac{t-d}{2d}} 2d \mu_P(n_te)\\
				&=2 \mu_P(n_t) \sum_{\substack{de = \delta_t \\ \frac{t}{d} \textnormal{ is odd}\\ \gcd(n_t,e)=1 }} (-1)^{\frac{t-d}{2d}} d \mu_P(e)\\
				&=2 \delta_t \mu_P(n_t) \sum_{\substack{e \mid \delta_t \\ \frac{te}{\delta_t} \textnormal{ is odd}\\ \gcd(n_t,e)=1 }} (-1)^{\frac{te-\delta_t}{2\delta_t}} \frac{\mu_P(e)}{e}.
			\end{split} 
		\end{equation*}
		
	\end{proof}
	
	\begin{corollary} Let $n \equiv 0 \Mod 4$ and $D_n^3=\emptyset$. Then
		$$s_n(t)= \left\{ \begin{array}{ll}
			(-1)^{\frac{t-1}{2}}2 \mu_P( \frac{n}{4})& \textnormal{ if $t$ is odd}\\ 
			0& \textnormal{ if $t$ is even,}
		\end{array}\right. $$ where $\gcd(\frac{n}{4},t)=1$.
	\end{corollary}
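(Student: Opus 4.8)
The plan is to obtain this corollary as an immediate specialization of Theorem~\ref{sumformula} to the case $\gcd(\tfrac{n}{4},t)=1$, so no new ideas are needed beyond a careful substitution. First I would introduce, exactly as in Theorem~\ref{sumformula}, the quantity $\delta_t=\gcd(\tfrac{n}{4},t)$ and set $n_t=\tfrac{n}{4\delta_t}$. The hypothesis $\gcd(\tfrac{n}{4},t)=1$ forces $\delta_t=1$, and consequently $n_t=\tfrac{n}{4}$. Since $D_n^3=\emptyset$ is assumed, Theorem~\ref{sumformula} applies and gives
$$s_n(t)=2\delta_t\,\mu_P(n_t)\sum_{\substack{e\mid\delta_t\\ \frac{te}{\delta_t}\textnormal{ is odd}\\ \gcd(n_t,e)=1}}(-1)^{\frac{te-\delta_t}{2\delta_t}}\frac{\mu_P(e)}{e}=2\,\mu_P\!\left(\tfrac{n}{4}\right)\sum_{\substack{e\mid 1\\ te\textnormal{ is odd}\\ \gcd(\frac{n}{4},e)=1}}(-1)^{\frac{te-1}{2}}\frac{\mu_P(e)}{e}.$$
Here the index set has collapsed to divisors of $1$, so the only candidate term is $e=1$, for which $\gcd(\tfrac{n}{4},1)=1$ holds automatically and $\mu_P(1)=\mu(1)=1$.

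Next I would split on the parity of $t$. If $t$ is odd, the condition ``$te$ is odd'' is satisfied at $e=1$, so the sum reduces to the single term $(-1)^{\frac{t-1}{2}}\cdot\tfrac{\mu_P(1)}{1}=(-1)^{\frac{t-1}{2}}$, whence $s_n(t)=(-1)^{\frac{t-1}{2}}\,2\,\mu_P(\tfrac{n}{4})$. If $t$ is even, then $te=t$ is even, the single candidate term $e=1$ is excluded, the sum is empty, and hence $s_n(t)=0$. Assembling the two cases gives precisely the claimed piecewise formula.

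I do not expect any genuine obstacle: the corollary is pure bookkeeping on top of Theorem~\ref{sumformula}. The one point worth a remark is that the ``$t$ even'' branch is non-vacuous only when $\tfrac{n}{4}$ is itself odd (when $\tfrac{n}{4}$ is even the constraint $\gcd(\tfrac{n}{4},t)=1$ already forces $t$ odd), but the stated formula remains correct in both situations, so this causes no difficulty. Thus the proof is essentially the substitution $\delta_t=1$ followed by the trivial evaluation of a one-term (resp.\ empty) sum.
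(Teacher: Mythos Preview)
Your proposal is correct and follows exactly the paper's approach: the paper's proof consists of the single line ``Put $\delta_t=\gcd(\frac{n}{4},t)=1$ in Theorem~\ref{sumformula},'' and you have simply written out this substitution in full detail. Your additional remark about when the even-$t$ branch is vacuous is a nice observation but not needed for the argument.
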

	\begin{proof}
		Put $\delta_t=\gcd(\frac{n}{4},t)=1$ in Theorem \ref{sumformula}.
	\end{proof}
	
\begin{corollary} Let $n=4m$ with $m$ an odd integer and $D_n^3=\emptyset$. Then
	$$s_n(t)= \left\{ \begin{array}{ll}
		(-1)^{\frac{t-1}{2}} 2 c_m(t)& \textnormal{ if $t$ is odd}\\ 
		0& \textnormal{ if $t$ is even.}
	\end{array}\right.$$
\end{corollary}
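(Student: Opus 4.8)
The plan is to deduce the corollary in a few lines from the proposition that expresses $s_n(q)$, for $n=2^{a}m$ with $m$ odd and $a\geq 2$, as $s_n(q)=2\sin\!\left(\tfrac{3\pi q}{2^{a-1}}\right)c_{n/2}(q)$ when $m\equiv 1\Mod 4$. First I would observe that the hypothesis $D_n^3=\emptyset$ forces $m\equiv 1\Mod 4$: the odd divisors of $n=4m$ are exactly the divisors of $m$, and $m$ is itself such a divisor, so $m\notin D_n^3=\emptyset$ gives $m\equiv 1\Mod 4$. Since $n=4m=2^{2}m$ with $m$ odd, that proposition (branch $m\equiv 1\Mod 4$, so $2^{a-1}=2$) gives
\[ s_n(t)=2\sin\!\left(\tfrac{3\pi t}{2}\right)c_{n/2}(t)=2\sin\!\left(\tfrac{3\pi t}{2}\right)c_{2m}(t).\]

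Next I would evaluate $\sin(3\pi t/2)$ according to the parity of $t$. If $t$ is even then $t/2\in\mathbb{Z}$, so $\sin(3\pi t/2)=0$ and hence $s_n(t)=0$, which is the second branch of the claim. If $t$ is odd, write $t=2j+1$ with $j=(t-1)/2$; then $\tfrac{3\pi t}{2}=3\pi j+\tfrac{3\pi}{2}$, and since $\sin 3\pi j=0$ and $\cos 3\pi j=(-1)^{j}$, the addition formula yields $\sin(3\pi t/2)=-(-1)^{j}=-(-1)^{(t-1)/2}$. Thus $s_n(t)=-2(-1)^{(t-1)/2}c_{2m}(t)$ for odd $t$.

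It remains, for odd $t$, to replace $c_{2m}(t)$ by $c_m(t)$. Setting $\delta=\gcd(2m,t)=\gcd(m,t)$ (odd) and applying Lemma~\ref{murty2008problems} to both $c_{2m}(t)$ and $c_m(t)$, together with $\mu(2k)=-\mu(k)$ and $\varphi(2k)=\varphi(k)$ for odd $k$, gives $c_{2m}(t)=-c_m(t)$ (equivalently this is the multiplicativity $c_{2m}(t)=c_2(t)c_m(t)$ with $c_2(t)=(-1)^{t}=-1$). Substituting, $s_n(t)=-2(-1)^{(t-1)/2}\bigl(-c_m(t)\bigr)=2(-1)^{(t-1)/2}c_m(t)$, which together with the even case is exactly the asserted formula.

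I do not expect a real obstacle: the only points needing care are extracting $m\equiv 1\Mod 4$ so that the correct branch of the proposition is used (the branch $m\equiv 3\Mod 4$ would produce $\sin(\pi t/2)$ and flip a sign), and the sign bookkeeping in the trigonometric step. Alternatively one can argue straight from Theorem~\ref{sumformula}: with $\delta_t=\gcd(\tfrac n4,t)=\gcd(m,t)$ and $n_t=\tfrac{m}{\delta_t}$ both odd, the condition ``$\tfrac{te}{\delta_t}$ odd'' holds for every admissible $e$ when $t$ is odd and for none when $t$ is even; moreover $D_n^3=\emptyset$ makes every prime dividing $\delta_t$ congruent to $1\Mod 4$, so each sign $(-1)^{\frac{te-\delta_t}{2\delta_t}}$ collapses to $(-1)^{(t-1)/2}$, while the residual sum $\delta_t\sum_{e\mid\delta_t,\,\gcd(n_t,e)=1}\mu(e)/e=\delta_t\prod_{p\mid\delta_t,\,p\nmid n_t}\bigl(1-\tfrac1p\bigr)$ equals $\varphi(m)/\varphi(n_t)$; Lemma~\ref{murty2008problems} then turns $2\mu(n_t)\cdot\varphi(m)/\varphi(n_t)$ into $2c_m(t)$. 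I would present the proposition-based route as the main argument.
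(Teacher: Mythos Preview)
Your argument is correct, but the route you present as primary is genuinely different from the paper's. The paper proves the corollary directly from Theorem~\ref{sumformula}: for even $t$ it observes that the condition ``$\tfrac{te}{\delta_t}$ odd'' is never satisfied, while for odd $t$ it uses $D_n^3=\emptyset$ (hence every divisor of $\delta_t$ is $\equiv 1\Mod 4$) to collapse each sign $(-1)^{(te-\delta_t)/(2\delta_t)}$ to $(-1)^{(t-1)/2}$, then evaluates the remaining Euler-product sum as $\varphi(m)/\varphi(m/\delta_t)$ and invokes Lemma~\ref{murty2008problems}. This is exactly the alternative you sketch at the end.

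Your main argument instead goes through Proposition~6.8 (the formula $s_n(q)=2\sin(3\pi q/2^{a-1})\,c_{n/2}(q)$ for $m\equiv 1\Mod 4$), reducing everything to the elementary evaluation of $\sin(3\pi t/2)$ and the identity $c_{2m}(t)=-c_m(t)$ for odd $t$. This is shorter and avoids the somewhat heavy M\"obius-type machinery of Theorem~\ref{sumformula}. It also shows a bit more: since Proposition~6.8 does not assume $D_n^3=\emptyset$, your computation actually establishes the formula under the weaker hypothesis $m\equiv 1\Mod 4$, whereas the paper's route genuinely uses the full strength of $D_n^3=\emptyset$ (inherited from Theorem~\ref{sumformula}) to make the signs constant across the sum. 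The paper's approach, on the other hand, keeps the corollary self-contained as a specialization of the general closed form just proved.
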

\begin{proof}
If $t$ is an even integer then $\frac{te}{\delta_t}$ is not odd whenever $ e\mid \delta_t$ and $0 \leq t \leq n-1$. Therefore by Theorem \ref{sumformula} we get $s_n(t)=0$. Assume that $t$ is an odd integer. Since $D_n^3=\emptyset$ and $m$ is odd, $\delta_t \equiv 1 \Mod 4$ for all $0 \leq t \leq n-1$. Note that $\frac{te}{\delta_t} \equiv 1 \Mod 4$ (resp. $3 \Mod 4$) if and only if $t \equiv 1 \Mod 4$ (resp. $3 \Mod 4$) for all $ e \mid \delta_t$ and $0 \leq t \leq n-1$. By Lemma \ref{MuValueOnOddEven} and Theorem \ref{sumformula}, we have 

		\begin{equation*}
			\begin{split}
s_n(t)=2 \delta_t  \mu_P(n_t) \sum_{\substack{e \mid \delta_t \\ \frac{te}{\delta_t} \textnormal{ is odd}\\ \gcd(n_t,e)=1 }} (-1)^{\frac{te-\delta_t}{2\delta_t}} \frac{\mu_P(e)}{e} 
	  &= 2 \delta_t  \mu(n_t) \sum_{\substack{e \mid \delta_t \\ \gcd(n_t,e)=1 }} (-1)^{\frac{t-1}{2}} \frac{\mu(e)}{e} \\
	  &= (-1)^{\frac{t-1}{2}} 2 \mu(n_t) \delta_t \sum_{\substack{e \mid \delta_t \\ \gcd(n_t,e)=1 }}  \frac{\mu(e)}{e} \\
	  &= (-1)^{\frac{t-1}{2}} 2  \mu(n_t) \delta_t \prod_{\substack{p \mid\delta_t \\ p\nmid  n_t }}\bigg(1- \frac{1}{p}\bigg) \\
	  &= (-1)^{\frac{t-1}{2}} 2 \mu(n_t)  \frac{\varphi(m)}{\varphi(m/\delta_t)} \\
	  &= (-1)^{\frac{t-1}{2}} 2 \frac{\mu({m/\delta_t}) \varphi(m)}{\varphi(m/\delta_t)}
	  =  (-1)^{\frac{t-1}{2}} 2 c_m(t).
			\end{split} 
		\end{equation*} 
 In the fourth equality, we  use $$\frac{\varphi(m)}{\varphi({m/\delta_t})}= \frac{m}{m/\delta_t} \prod_{\substack{p \mid\delta_t \\ p\nmid  n_t }}\bigg(1- \frac{1}{p}\bigg) = \delta_t \prod_{\substack{p \mid\delta_t \\ p\nmid  n_t }}\bigg( 1- \frac{1}{p}\bigg). $$ Last equality holds because of Lemma \ref{murty2008problems}.
\end{proof}

\noindent\textbf{Conflict of Interest}: We declare that we have no conflict of interest to this work. 
	
\noindent\textbf{Acknowledgements}

We sincerely thank Prof. Rupam Barman and Dr. Debajit Kalita for going through the manuscript and giving some useful comments. We also sincerely thank the anonymous reviewers for carefully reading our manuscript and providing insightful comments and suggestions that helped us in improving the presentation of the article.



\begin{thebibliography}{10}
	
	\bibitem{abdollahi2009cayley}
	A.~Abdollahi and E.~Vatandoost.
	\newblock Which Cayley graphs are integral?
	\newblock {\em The Electronic Journal of Combinatorics} 16(1) (2009), \#R122.
	
	\bibitem{ahmadi2009graphs}
	O.~Ahmadi, N.~Alon, I.F. Blake and I.E. Shparlinski.
	\newblock Graphs with integral spectrum.
	\newblock {\em Linear Algebra and its Applications} 430(1) (2009), 547--552.
	
	\bibitem{alperin2014rational}
	R.C. Alperin.
	\newblock Rational subsets of finite groups.
	\newblock {\em International Journal of Group Theory} 3(2) (2014), 53--55.
	
	\bibitem{alperin2012integral}
	R.C. Alperin and B.L. Peterson.
	\newblock Integral sets and Cayley graphs of finite groups.
	\newblock {\em The Electronic Journal of Combinatorics} 19(1) (2012), \#P44.
	
	\bibitem{balinska2002survey}
	K.~Bali{\'n}ska, D.~Cvetkovi{\'c}, Z.~Radosavljevi{\'c}, S.~Simi{\'c} and
	D.~Stevanovi{\'c}.
	\newblock A survey on integral graphs.
	\newblock {\em Publikacije Elektrotehni{\v{c}}kog fakulteta. Serija Matematika} (2002), 42--65.
		
	\bibitem{debajit}
	R.B. Bapat, D. Kalita and S. Pati.
	\newblock On weighted directed graphs.
	\newblock {\em Linear Algebra and its Applications} 436(1) (2012), 99--111.
	
	\bibitem{basic}
	M. Ba\v si\'c, M. D. Petkovi\'c and D. Stevanovi\'c.
	\newblock Perfect state transfer in integral circulant graphs.
	\newblock {\em Applied Mathematics Letters} 22 (2009), 1609-1615.
	

	\bibitem{biggs1993algebraic}
	N.L. Biggs.
	\newblock {\em Algebraic graph theory}, Volume 67, 
	\newblock Cambridge University Press (1993).
	
	\bibitem{bridges1982rational}
	W.G. Bridges and R.A. Mena.
	\newblock Rational g-matrices with rational eigenvalues.
	\newblock {\em Journal of Combinatorial Theory, Series A} 32(2) (1982), 264--280.
	
	\bibitem{brouwer2008small}
	A.E. Brouwer.
	\newblock Small integral trees.
	\newblock {\em The Electronic Journal of Combinatorics} 15 (2008), \#N1.
	
	\bibitem{brouwer2008integral}
	A.E. Brouwer and W.H. Haemers.
	\newblock The integral trees with spectral radius 3.
	\newblock {\em Linear Algebra and its Applications} 429(11-12) (2008), 2710--2718.
	
	\bibitem{bussemaker1976there}
	F.C. Bussemaker and D.M. Cvetkovi{\'c}.
	\newblock There are exactly 13 connected, cubic, integral graphs.
	\newblock {\em Publikacije Elektrotehni{\v{c}}kog fakulteta. Serija Matematika
		i fizika} (544/576) (1976), 43--48.
	
	\bibitem{cheng2019integral}
	T.~Cheng, L.~Feng and H.~Huang.
	\newblock Integral Cayley graphs over dicyclic group.
	\newblock {\em Linear Algebra and its Applications} 566 (2019), 121--137.
	
	\bibitem{cohen}
	E.~Cohen.
	\newblock A class of residue systems (mod $ r $) and related arithmetical
	functions. i. a generalization of m{\"o}bius inversion.
	\newblock {\em Pacific Journal of Mathematics} 9(1) (1959), 13--23.
	
	\bibitem{csikvari2010integral}
	P.~Csikv{\'a}ri.
	\newblock Integral trees of arbitrarily large diameters.
	\newblock {\em Journal of Algebraic Combinatorics} 32(3) (2010), 371--377.
	
	\bibitem{circulant}
	P.J. Davis.
	\newblock {\em Circulant matrices}.
	\newblock Wiley, (1979).
	
	\bibitem{2017mixed}
	K.~Guo and B.~Mohar.
	\newblock Hermitian adjacency matrix of digraphs and mixed graphs.
	\newblock {\em Journal of Graph Theory} 85(1) (2017), 217--248.
	
	\bibitem{harary1974graphs}
	F.~Harary and A.J. Schwenk.
	\newblock Which graphs have integral spectra?
	\newblock In {\em Graphs and Combinatorics}. Springer (1974), 45--51.
	
	\bibitem{numbertheory}
	K.~Ireland and M.~Rosen.
	\newblock {\em A classical introduction to modern number theory}.
	\newblock Springer Verlag (1984).
	
		\bibitem{ours}
	M. Kadyan and B. Bhattacharjya.
	\newblock Integral mixed Cayley graphs over abelian groups.
	\newblock {\em The Electronic Journal of Combinatorics} 28(4) (2021), \#P4.46.
	
	\bibitem{klotz2010integral}
	W.~Klotz and T.~Sander.
	\newblock Integral Cayley graphs over abelian groups.
	\newblock {\em The Electronic Journal of Combinatorics} 17 (2010), \#R81.
	
	\bibitem{ku2015cayley}
	C.Y. Ku, T.~Lau and K.B. Wong.
	\newblock Cayley graph on symmetric group generated by elements fixing $k$
	points.
	\newblock {\em Linear Algebra and its Applications} 471 (2015), 405--426.
	
	\bibitem{lepovic2005there}
	M.~Lepovi{\'c}, S.K. Simi{\'c}, K.T. Bali{\'n}ska and K.T. Zwierzy{\'n}ski.
	\newblock There are 93 non-regular, bipartite integral graphs with maximum
	degree four.
	\newblock {\em The Technical University of Pozna{\'n}, CSC Report} 511 (2005).
	
	\bibitem{2015mixed}
	J.~Liu and X.~Li.
	\newblock Hermitian-adjacency matrices and hermitian energies of mixed graphs.
	\newblock {\em Linear Algebra and its Applications} 466 (2015), 182--207.
	
	\bibitem{lu2018integral}
	L.~Lu, Q.~Huang and X.~Huang.
	\newblock Integral Cayley graphs over dihedral groups.
	\newblock {\em Journal of Algebraic Combinatorics} 47(4) (2018), 585--601.
	
	\bibitem{murty2008problems}
	M.R. Murty.
	\newblock {\em Problems in analytic number theory}. Volume 206, 
	\newblock Springer Science \& Business Media (2008).
	
	\bibitem{ramanujan1918certain}
	S.~Ramanujan.
	\newblock On certain trigonometrical sums and their applications in the theory
	of numbers.
	\newblock {\em Transactions of the Cambridge Philosophical Society} 22(13) (1918), 259--276.
	
	\bibitem{2006integral}
	W.~So.
	\newblock Integral circulant graphs.
	\newblock {\em Discrete Mathematics} 306(1) (2006), 153--158.
	
	\bibitem{stevanovic20034}
	D.~Stevanovi{\'c}.
	\newblock 4-regular integral graphs avoiding $\pm$3 in the spectrum.
	\newblock {\em Publikacije Elektrotehni{\v{c}}kog fakulteta. Serija Matematika} (2003), 99--110.
	
	\bibitem{wang2000some}
	L.~Wang and X.~Li.
	\newblock Some new classes of integral trees with diameters 4 and 6.
	\newblock {\em Australasian Journal of Combinatorics} 21 (2000), 237--244.
	
	\bibitem{wang2002integral}
	L.~Wang, X.~Li and X.~Yao.
	\newblock Integral trees with diameters 4, 6 and 8.
	\newblock {\em Australasian Journal of Combinatorics} 25 (2002), 29--44.
	
	\bibitem{watanabe1979note}
	M.~Watanabe.
	\newblock Note on integral trees.
	\newblock {\em Mathematics Reports} 2 (1979), 95--100.
	
	\bibitem{watanabe1979integral}
	M.~Watanabe and A.J. Schwenk.
	\newblock Integral starlike trees.
	\newblock {\em Journal of the Australian Mathematical Society} 28(1) (1979), 120--128.
	
\end{thebibliography}
\end{document}